 \font \eightrm=cmr8
 \newcommand{\nc}{\newcommand}
\newtheorem{thm}{Theorem}
\newtheorem{exam}{Example}
\newtheorem{cor}[thm]{Corollary}
\newtheorem{lem}[thm]{Lemma}
\newtheorem{prop}[thm]{Proposition}
\newtheorem{defn}{Definition}
\newtheorem{rmk}[thm]{Remark}
\newcommand{\cB}{\mathcal{B}} 
\newcommand{\cD}{\mathcal {D}}
\newcommand{\cE}{\mathcal{E}} 
\newcommand{\cF}{\mathcal{F}}
\newcommand{\cI}{\mathcal{I}}
\newcommand{\cL}{\mathcal{L}}
\newcommand{\cP}{\mathcal{P}}
\newcommand{\im}{\mathop{\mathrm{Im}}\nolimits}
\nc{\ignore}[1]{{}}
\nc{\mrm}[1]{{\rm #1}}
\nc{\dirlim}{\displaystyle{\lim_{\longrightarrow}}\,}
\nc{\invlim}{\displaystyle{\lim_{\longleftarrow}}\,}
\nc{\vep}{\varepsilon} \nc{\ep}{\epsilon}
\nc{\sigmat}{\widetilde\sigma}
\nc{\ostar}{\overline{*}}
\nc{\mchar}{\mrm{Char}}
\nc{\Hom}{\mrm{Hom}}
\nc{\id}{\mrm{id}}
\nc{\remark}{\noindent{\bf{Remark:}}}
\nc{\remarks}{\noindent{\bf{Remarks:}}}
 \nc{\delete}[1]{}
 \nc{\grad}[1]{^{({#1})}}
 \nc{\fil}[1]{_{#1}}
\nc{\BA}{{\mathbb A}} \nc{\CC}{{\mathbb C}} \nc{\DD}{{\mathbb D}}
\nc{\EE}{{\mathbb E}} \nc{\FF}{{\mathbb F}} \nc{\GG}{{\mathbb G}}
\nc{\HH}{{\mathbb H}} \nc{\LL}{{\mathbb L}} \nc{\NN}{{\mathbb N}}
\nc{\PP}{{\mathbb P}} \nc{\QQ}{{\mathbb Q}} \nc{\RR}{{\mathbb R}}
\nc{\TT}{{\mathbb T}} \nc{\VV}{{\mathbb V}} \nc{\ZZ}{{\mathbb Z}}
\nc{\setr}{\RR} \nc{\setn}{\NN}
\nc{\Cal}[1]{{\mathcal {#1}}}
\nc{\mop}[1]{\mathop{\hbox {\rm #1} }}
\nc{\smop}[1]{\mathop{\hbox {\eightrm #1} }}
\nc{\mopl}[1]{\mathop{\hbox {\rm #1} }\limits}
\nc{\frakg}{{\mathfrak g}}
\nc{\g}[1]{{\mathfrak {#1}}}
\def \restr#1{\mathstrut_{\textstyle |}\raise-8pt\hbox{$\scriptstyle #1$}}
\def \srestr#1{\mathstrut_{\scriptstyle |}\hbox to
  -1.5pt{}\raise-4pt\hbox{$\scriptscriptstyle #1$}}
\nc{\wt}{\widetilde}
\nc{\wh}{\widehat}
\nc{\un}{\hbox{\bf 1}}
\nc{\redtext}[1]{\textcolor{red}{{\tt [[#1]]}}}
\nc{\bluetext}[1]{\textcolor{blue}{#1}}
\nc\fleche[1]{\mathop{\hbox to #1 mm{\rightarrowfill}}\limits}
\def\semi{\!\mathrel{\times}\kern -6.5 pt\joinrel\mathrel{\raise
    1.4pt\hbox{${\scriptscriptstyle |}$}}\,\,}
\nc{\gmop}[1]{\mathop{\hbox {\bf #1} }\nolimits}
\def\fleche#1{\mathop{\hbox to #1 mm{\rightarrowfill}}\limits}
\def\gfleche#1{\mathop{\hbox to #1 mm{\leftarrowfill}}\limits}
\def\inj#1{\mathop{\hbox to #1 mm{$\lhook\joinrel$\rightarrowfill}}\limits}
\def\ginj#1{\mathop{\hbox to #1 mm{\leftarrowfill$\joinrel\rhook$}}\limits}
\def\surj#1{\mathop{\hbox to #1 mm{\rightarrowfill\hskip 2pt\llap{$\rightarrow$}}}\limits}
\def\gsurj#1{\mathop{\hbox to #1 mm{\rlap{$\leftarrow$}\hskip 2pt
      \leftarrowfill}}\limits}
\def \restr#1{\mathstrut_{\textstyle |}\raise-6pt\hbox{$\scriptstyle #1$}}
\def \srestr#1{\mathstrut_{\scriptstyle |}\hbox to
-1.5pt{}\raise-4pt\hbox{$\scriptscriptstyle #1$}}
\nc{\scal}[2]{ \langle #1\, ,\, #2 \rangle}
\nc{\abs}[1]{ \left\vert  #1 \right\vert}
\nc{\norm}[1]{ \left\lVert  #1 \right\rVert}
\newcommand{\Id}{\operatorname{Id}}
\newcommand{\WF}[1]{\mathop{\mathrm{WF}(#1)}\nolimits}
\newcommand{\supp}[1]{{\mathrm{supp}(#1)}}
\newcommand{\Endo}[1]{\mathop{\mathrm{End}(#1)}\nolimits}
\newcommand{\pr}[1]{\mathop{\mathrm{pr}_{#1}}\nolimits}
\newcommand{\fc}{\mathrm{fc}}
\begin{document}

\title[Convolution of distributions on groupoids]
      {About the convolution of distributions on groupoids}

\author{Jean-Marie Lescure , Dominique Manchon, St\'ephane Vassout ${}^{(1)}$} 
\address{ANR-14-CE25-0012-01  SINGSTAR}



\begin{abstract}
We review the properties of transversality of distributions with respect to submersions. This allows us to construct a convolution product for a large class of distributions on Lie groupoids. We get a unital involutive algebra $\cE_{r,s}'(G,\Omega^{1/2})$ enlarging the convolution algebra $C^\infty_c(G,\Omega^{1/2})$ associated with any Lie groupoid $G$.  We prove that $G$-operators  are convolution operators by transversal distributions. We also investigate the microlocal aspects of the convolution product. We give conditions on wave front sets  sufficient to compute the convolution product and we show that the wave front set of the convolution product of two distributions is essentially the product of their wave front sets in the symplectic groupoid $T^*G$ of Coste-Dazord-Weinstein. This also leads to a subalgebra $\cE_{a}'(G,\Omega^{1/2})$ of  $\cE_{r,s}'(G,\Omega^{1/2})$ which contains for instance the algebra of pseudodifferential $G$-operators and a class of Fourier integral $G$-operators which 
will be the central theme of a forthcoming paper. 
\end{abstract}

\maketitle 
\footnotetext[1]{The first and third authors are supported by ANR Grant ANR-14-CE25-0012-01  SINGSTAR}


\section{Introduction}

The motivation of this paper is twofold. Firstly, we wish to study the convolution of distributions on a Lie groupoid and  its relationship with the action of the so-called $G$-operators. Secondly, we would like to set up a neat framework in order to investigate  in a future work the notions of Lagrangian distributions and Fourier integral operators on a groupoid. 

The notion of $C^\infty$ longitudinal family of distributions in the framework of groupoids appears in \cite{MP,NWX,Vassout2006} in order to define right invariant pseudodifferential operators. Also, in the works of  Monthubert \cite{Monthubert}, these families are considered from the point of view  of distributions on the whole groupoid, so that  the action of the corresponding pseudodifferential operators  on $C^\infty$ functions is given by a convolution product. Here, we  carry on this idea by exploring the correspondence between $C^\infty$ longitudinal families of distributions and single distributions on the whole underlying manifold of the groupoid and by studying  the convolution product of distributions on groupoids. This is achieved at two levels. 

The first level is based on the notion of transversality of distributions with respect to a submersion $\pi : M\to B$ \cite{AndrouSkand2011}.  It appears that the space $\cD'_\pi(M)$ of such distributions is isomorphic to the space of $C^\infty$ family of distributions in the fibers of  $\pi$. Also, in the spirit of the Schwartz kernel Theorem suitably stated on the total space of a submersion, the space $\cD'_\pi(M)$ coincides with the space of continuous  $C^\infty(B)$-linear maps between a suitable subspace $C^\infty_{\fc-\pi}(M)$ of $C^\infty$ functions on $M$ and $C^\infty(B)$. Furthermore,  operations such as push-forwards and fibered-products of distributions behave well on transversal distributions and these operations allow to define the convolution product of distributions on groupoids, as soon as these distributions satisfy some transversality assumptions with respect to source or target maps. Distributions on a groupoid which are transversal both to the source and target maps are called bi-
transversal and they give rise to an involutive unital algebra $\cE_{r,s}'(G,\Omega^{1/2})$ for the convolution product. 
Then, one has the necessary tools to prove   that  $G$-operators on 
a groupoid are in $1$ to $1$ correspondence with  transversal distributions acting by convolution and that  bi-transversal distributions are in $1$ to $1$ correspondence with adjointable $G$-operators.

The second level is a microlocal refinement of the first one and consists in using the wave front set of distributions. A basic observation, due to Coste, Dazord and Weinstein \cite{CDW}, is that the cotangent manifold $T^*G$ of any Lie groupoid $G$ carries a non trivial structure of symplectic groupoid over the dual of the Lie algebroid $A^*G$, this structure being intimately related to the multiplication of $G$ and then to the convolution on $C^\infty_c(G,\Omega^{1/2})$. 
This  groupoid combined with the classical calculus of wave front sets developped by  H\"ormander brings in natural conditions on wave front sets of distributions on a groupoid allowing to define their convolution product and to compute the corresponding wave front set using the law of $T^*G$. The main consequence of this approach is that the space of compactly supported {\sl admissible} distributions:
\[
 \cE_{a}'(G,\Omega^{1/2}) = \{ u\in \cE'(G,\Omega^{1/2})\ ;\ \WF{u}\cap \ker s_\Gamma= \WF{u}\cap \ker r_\Gamma=\emptyset \},
\]
where $s_\Gamma,r_\Gamma$ denotes the source and target maps of $T^*G\rightrightarrows A^*G$, is a unital involutive sub-algebra of $(\cE_{r,s}'(G,\Omega^{1/2}),*)$ and that 
\[
  \WF{u*v} \subset \WF{u}* \WF{v},\quad \forall u,v \in \cE_{a}'(G,\Omega^{1/2}),
\]
where $*$ is the multiplication in the Coste-Dazord-Weinstein groupoid $T^*G$. We would like to add that  the corresponding formula of Hormander for the wave front set of composition of kernels \cite{Horm-FIO1,Horm-1} makes the above formula quite predictable. Indeed, given a manifold $X$, the composition of kernels corresponds to convolution in the pair groupoid $X\times X$ and the composition law that H\"ormander defines on $T^*(X\times X)$ to compute wave front sets of composition of kernels is precisely the multiplication map of the Coste-Dazord-Weinstein symplectic groupoid  $T^*(X\times X)$. 

The distributions belonging to $\cE_{a}'(G,\Omega^{1/2})$ are said to have a bi-transversal wave front set. Actually, this second approach of the convolution product of distributions, based on the groupoid $T^*G$ and H\"ormander's techniques, works under assumptions on the wave front sets of distributions weaker than bi-transversality, and we shall briefly develop this point too. However, the algebra $\cE_{a}'(G,\Omega^{1/2})$ is already large enough for the applications that we have in mind. For instance, pseudodifferential $G$-operators are admissible:
\[
 \Psi_c(G) \subset \cE_{a}'(G,\Omega^{1/2}). 
\]
More importantly, if   $\Lambda\subset T^*G\setminus 0$ is  a homogeneous Lagrangian submanifold of $T^*G$ which is also bi-transversal as a subset of $T^*G$, then Lagrangian distributions \cite{Horm-4} subordinated to $\Lambda$ are admissible:
\[
 I^*(G,\Lambda,\Omega^{1/2}) \subset  \cE_{a}'(G,\Omega^{1/2})
\]
and in particular they give rise to $G$-operators. This will be the starting point of a second paper. 
 
The present paper is organized as follows. In section  \ref{sec:2}, we revisit the Schwartz kernel Theorem in the framework of submersions. Then the notion of distributions transversal with respect to a submersion is recalled, we give some examples and we study natural operations available on them. In section \ref{sec:3}, we apply the results of section \ref{sec:2} to the case of groupoids. We then define the convolution product of transversal distributions and obtain the unital algebra $\cE_{r,s}'(G,\Omega^{1/2})$ of bi-transversal distributions. In section  \ref{sec:4}, we link the notion of $G$-operators with the one of transversal distributions and we obtain a $1$ to $1$ correspondence between the space of adjointable compactly supported $G$-operators and $\cE_{r,s}'(G,\Omega^{1/2})$. In section \ref{sec:5}, we use both the H\"ormander's results about wave front sets of distributions and the symplectic groupoid structure on $T^*G$ to identify an important subalgebra of $\cE_{r,s}'(G,\Omega^{1/2})$, 
namely $\cE_{a}'(G,\Omega^{1/2})$ the subspace of distributions with bi-transversal wave front sets, onto which wave front sets behave particulary well with respect to the convolution product. 


Finally, we recall in Section \ref{sec:7} the definition of the Coste-Dazord-Weinstein  groupoid \cite{CDW} and add some explanations and comments.   

The authors would like to mention that the subject of convolution of transversal distributions is also studied in an independent work by E. Van Erp and R. Yuncken \cite{VY}.

{\bf Aknowledgements} \\
We are happy to thank Claire Debord, Georges Skandalis and Robert Yuncken for many enlightening discussions. Also, the present version of our article has greatly benefited from the remarks addressed by two anonymous referees and we would like to warmly thank them.

\section{Distributions, submersions, transversality}\label{sec:2}       
    
\subsection{Schwartz kernel Theorem for submersions}
To handle distributions on groupoids, it is useful to study distributions in the total space of a submersion. The notion of transversality we shall recall is borrowed from \cite{AndrouSkand2011} and it extends the condition of semi-regularity given in \cite[p.532]{Treves1967}.

For any   manifold $M$ and real number $\alpha$, the bundle of $\alpha$-densities is denoted by $\Omega^{\alpha}_M$.  The space $\cD'(M,\Omega^\alpha_M)$ (resp. $\cE'(M,\Omega^\alpha_M)$) is the topological dual of the space $C^\infty_c(M,\Omega^{1-\alpha}_M)$ (resp. $C^\infty(M,\Omega^{1-\alpha}_M)$). With the convention chosen, we have canonical topological embeddings 
\[
  C^\infty(M,\Omega^\alpha) \hookrightarrow \cD'(M,\Omega^\alpha)
\]
and we abbreviate $\cD'(M)=\cD'(M,\Omega^0_M)$, $\Omega_M=\Omega^1_M$.  

Distributions spaces are provided with the strong topology. The space of continuous linear maps between two locally convex vector spaces $E,F$ is denoted by $\cL(E,F)$ and provided with the topology of uniform convergence on bounded subsets. If $E,F$ are modules over an algebra $A$, the subspace of continuous $A$-linear maps between $E$ and $F$ is denoted by $\cL_A(E,F)$ and  considered as a topological subspace of $\cL(E,F)$. 

We are going to reformulate the Schwartz kernel Theorem for distributions in the total space of a submersion $\pi : M\longrightarrow B$   between $C^\infty$-manifolds. To do this, we begin with the product case  $\pi=\pr{1} : X\times Y\longrightarrow X$ where $X\subset \RR^{n_X}$  and $Y\subset \RR^{n_Y}$ denote open subsets.

The Schwartz kernel Theorem then asserts that the map  
\begin{equation}\label{eq:SKT}
      \cD'(X\times Y) \ni u \longmapsto \left(f\longmapsto u_f(x)=\int_Y u(x,y)f(y)dy\right) \in \cL(C^\infty_c(Y),\cD'(X)) 
\end{equation}
where the integral is understood in the distribution sense, is a topological isomorphism. This can be seen as a push-forward operation along the fibers of $\pi$ and to state this for an arbitrary  submersion  $\pi : M\longrightarrow B$ between $C^\infty$-manifolds, we  introduce  the space  
\begin{equation}\label{defn:function-c-pi}
C^\infty_{\fc-\pi}(M) = \{ f\in C^\infty(M)\ ;\ \pi : \supp{f}\to B\text{ is proper } \}. 
\end{equation}
This is  the LF-space associated with the sequence of Frechet spaces 
\(
 \{ f\in C^\infty(M)\ ;\ \supp{f}\subset F_n\}=C^\infty_0(F_n)
\)
where  $(F_n)$  is an exhausting sequence of closed subsets of $M$ such that $\pi : F_n\to B$ is proper. 

\ignore{Following \cite[Theorem 2.1.5]{Horm-1}, we can prove that the topology of $C^\infty_{\fc-\pi}(M)$  is	 given by the semi-norms generated, using a partition of unity, by the following ones expressed in local coordinates:
\begin{equation}
 \label{eq:semi-norm-C-c-pi}
 p_{U,\rho}(f) = \sum_\alpha \sup_{U}\vert\rho_\alpha\partial^\alpha f\vert
\end{equation}
where $U$  and $(\rho_\alpha)$ respectively run  through coordinate patches in $M$ and collections of continuous functions on $M$ such that on any $\Omega$ for which  $\pi : \Omega \to B$ is proper,  
\(
 \# \{\alpha \ ;\ \rho_\alpha\not=0  \text{ on } \Omega \}
\)
is finite and the functions $\rho_\alpha\vert_\Omega$  are bounded  for all $\alpha$. }

The injections $C^\infty_c\hookrightarrow C^\infty_{\fc-\pi} \hookrightarrow C^\infty$ are continuous and $C^\infty_c$ is dense in $C^\infty_{\fc-\pi}$.  When $B$ is compact, we have $C^\infty_{\fc-\pi}=C^\infty_c$. Vector bundles over $M$ can be added and we do not repeat the definitions. 
Then for any $f \in C^\infty_{\fc-\pi}(M,\Omega_M)$, one can associate a distribution $\pi_*(uf)$ on $B$ defined for any  $g \in C^\infty_{c}(B,\Omega_B)$ by
\begin{equation}
\langle \pi_*(uf), g\rangle=\langle uf, g\circ \pi \rangle= \langle u , f.g\circ \pi \rangle.
\end{equation}
One can view naturally $C^\infty_{\fc-\pi}(M,\Omega_M)$ as a $ C^\infty(B)$-module by using $\pi$ :
for $f \in C^\infty_{\fc-\pi}(M,\Omega_M)$ and $g \in C^\infty_{c}(B,\Omega_B)$, one defines $f.g$ on $M$ by $f.g(m)=f(m)g(\pi(m))$ and the condition on the support is obvious.

We  have
\begin{thm}[Schwartz kernel Theorem for submersions]\label{thm:SKT-submersion}
The map 
 \begin{align*}
   \pi_* : \cD'(M) &\longrightarrow \cL_{C^\infty(B)}(C^\infty_{\fc-\pi}(M,\Omega_M), \cD'(B,\Omega_B)) \\
      u & \longmapsto \pi_*(u\cdot)
    \end{align*} 
is a topological isomorphism.
 \end{thm}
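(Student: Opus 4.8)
The plan is to check first that $\pi_*(u\cdot)$ genuinely lands in $\cL_{C^\infty(B)}(C^\infty_{\fc-\pi}(M,\Omega_M),\cD'(B,\Omega_B))$, then to dispose of injectivity by an elementary bump-function argument, and finally to obtain surjectivity together with continuity of the inverse by localising to the product case $\pi=\pr{1}:X\times Y\to X$, where the classical kernel theorem \eqref{eq:SKT} applies, and reassembling the local kernels by a partition of unity. I would begin with the formal properties. For $u\in\cD'(M)$ and $f\in C^\infty_{\fc-\pi}(M,\Omega_M)$ the density $f\cdot(g\circ\pi)$ is compactly supported for every $g\in C^\infty_c(B)$, since $\supp f$ is $\pi$-proper and $\supp g$ is compact; hence $\pi_*(uf)\in\cD'(B,\Omega_B)$ is well defined. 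The identity $\pi_*\!\left(u\cdot(f\cdot g\circ\pi)\right)=g\cdot\pi_*(uf)$ shows $C^\infty(B)$-linearity, and continuity in $f$, as well as continuity of $u\mapsto\pi_*(u\cdot)$, follow from the definitions of the strong topologies and of the seminorms on $C^\infty_{\fc-\pi}$. Injectivity is then immediate: if $\pi_*(uf)=0$ for all $f$, then for any $\varphi\in C^\infty_c(M,\Omega_M)$ pick $g\in C^\infty_c(B)$ equal to $1$ on the compact set $\pi(\supp\varphi)$, so that $\varphi\cdot(g\circ\pi)=\varphi$ and $\langle u,\varphi\rangle=\langle\pi_*(u\varphi),g\rangle=0$, whence $u=0$.

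The substance is surjectivity with a continuous inverse, which I would reduce to the product case. Cover $M$ by charts on which $\pi$ is isomorphic to $\pr{1}:X\times Y\to X$ and fix a subordinate partition of unity. Given $T$ in the target space, the $C^\infty(B)$-linearity is exactly what makes the localisation consistent: if $f$ is supported in a chart $U_i\cong X_i\times Y_i$, then $T(f)$ is supported in $\pi(\supp f)\subset X_i$, so $T$ restricts to a continuous map $T_i$ into $\cD'(X_i,\Omega_{X_i})$. One produces a local kernel $u_i\in\cD'(X_i\times Y_i)$ from $T_i$ in the product case, and the module structure over $C^\infty(B)$ guarantees that the pieces $u_i$ glue to a global $u\in\cD'(M)$ with $\pi_*(u\cdot)=T$.

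For the product case I would set $S(h)=T(1\otimes h)$ for $h\in C^\infty_c(Y,\Omega_Y)$, obtaining $S\in\cL(C^\infty_c(Y,\Omega_Y),\cD'(X,\Omega_X))$, and invoke the classical Schwartz kernel Theorem \eqref{eq:SKT} to produce a kernel $u\in\cD'(X\times Y)$ with $\pi_*(u\cdot)$ restricting to $S$ on the densities $1\otimes h$. Since $g\otimes h=(1\otimes h)\cdot(g\circ\pi)$, both $\pi_*(u\cdot)$ and $T$ are continuous and $C^\infty(X)$-linear and agree on the products $g\otimes h$; if the $C^\infty(X)$-span of such products is dense in $C^\infty_{\fc-\pi}(X\times Y,\Omega)$, then $\pi_*(u\cdot)=T$, which settles the product case and, after the partition of unity, the theorem. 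Bicontinuity of the inverse follows once the density step is controlled topologically, the remaining estimates being imported from \eqref{eq:SKT}.

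I expect the genuine obstacle to be this density statement, at the level of the LF topology of $C^\infty_{\fc-\pi}$. Note that $C^\infty_{\fc-\pi}(X\times Y)$ is strictly larger than the uniformly fibre-compact functions: its elements may have fibre-supports that move and grow with $x$, so there is no literal identification with a completed tensor product $C^\infty(X)\,\widehat{\otimes}\,C^\infty_c(Y)$ (whose elements all have support in some $X\times K$). One must therefore show, for each closed $F_n$ with $\pi\colon F_n\to X$ proper, that finite sums $\sum_j g_j\otimes h_j$ whose supports lie in $F_n$ are dense in the Fréchet space $C^\infty_0(F_n)$, and that these approximations are uniform enough to respect the inductive-limit topology. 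This is achieved by a partition of unity in the base $X$ with small supports, reducing $f$ to pieces with essentially rectangular support inside $F_n$, followed by a Fourier or Stone--Weierstrass approximation in the fibre; keeping the approximants inside $F_n$ (or $F_{n+1}$) while retaining convergence in the LF topology is the delicate point. The hard analysis of the kernel theorem itself is borrowed from \eqref{eq:SKT}; our contribution is this topological repackaging together with the module-theoretic globalization.
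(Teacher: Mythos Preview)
Your approach is essentially sound but takes a long detour that the paper avoids entirely. The paper's proof is a direct construction of the inverse: for $T\in\cL_{C^\infty(B)}(C^\infty_{\fc-\pi}(M,\Omega_M),\cD'(B,\Omega_B))$ one simply \emph{defines}
\[
\langle I(T),f\rangle=\langle T(f),\psi\rangle,\qquad f\in C^\infty_c(M,\Omega_M),\ \psi\in C^\infty_c(B),\ f\psi=f,
\]
and checks that the $C^\infty(B)$-linearity of $T$ makes this independent of the choice of $\psi$. Continuity of $I$ and the relations $\pi_*\circ I=\Id$, $I\circ\pi_*=\Id$ are then one-line verifications. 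No localisation to the product case, no partition of unity, no appeal to the classical kernel theorem \eqref{eq:SKT}, and no density argument are needed.

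The striking thing is that you already wrote down this formula: your injectivity step reads $\langle u,\varphi\rangle=\langle\pi_*(u\varphi),g\rangle$ with $g\equiv1$ on $\pi(\supp{\varphi})$, which is precisely the statement $I(\pi_*(u\cdot))=u$. Had you turned this identity around and used it as a \emph{definition} of $u$ from $T$, you would have obtained surjectivity and a continuous inverse in one stroke, bypassing the entire reduction to the product case and the delicate LF-density question you flag at the end. Your route through \eqref{eq:SKT} and a tensor-product approximation in $C^\infty_{\fc-\pi}$ can presumably be completed, but it trades a two-line algebraic observation for a genuine piece of topological work; the density statement you isolate (finite sums of tensors dense in each $C^\infty_0(F_n)$ with support control compatible with the inductive limit) is, as you say, not entirely trivial.
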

 \begin{proof}
 Let $C$ be a bounded subset of $C^\infty_{\fc-\pi}(M,\Omega_M)$ and $D$ be a bounded subset of $C^\infty_c(B)$. Then $C.D=\{f.g\ ; \ f\in C, g\in D\}$ is a bounded subset of $C^\infty_c(M,\Omega_M)$. The continuity of $\pi_*$ follows. Conversely, we define
 $I :\cL_{C^\infty(B)}(C^\infty_{\fc-\pi}(M,\Omega_M), \cD'(B,\Omega_B))\to \cD'(M)$ by 
 \begin{equation}
  \langle I(T) , f\rangle  = \langle T(f) , \psi \rangle \quad f\in C^\infty_{c}(M,\Omega_M),\  \psi\in C^\infty_c(B), \ f\psi = f.
 \end{equation}
The definition of $I(T)$ as a linear form on $C^\infty_{c}(M,\Omega_M)$ is consistant for $T$ being $C^\infty(B)$-linear, the left hand side does not depend on the choice of $\psi$ such that $f\psi = f$. Moreover, if $E$ is a bounded subset of $C^\infty_{c}(M,\Omega_M)$  then there exists a compact subset $K\subset M$ such that $f\in E$ implies $\supp{f}\subset K$. Fixing $\psi\in C^\infty_c(B)$ such that $\psi=1$ onto $K$ yields that $I(T)$ is a distribution for any $T$ and the continuity of the map $I$. The relations $\pi_*\circ I=\Id$ and $I\circ \pi_*=\Id$ are obvious. 
 \end{proof}
 \begin{rmk}
\rm{Playing with supports, we also get 
\[
   \cE'(M) \simeq \cL_{C^\infty(B)}(C^\infty(M,\Omega_M), \cE'(B,\Omega_B)) \text{ and }
   \cD'_{\fc-\pi}(M) \simeq \cL_{C^\infty(B)}(C^\infty(M,\Omega_M), \cD'(B,\Omega_B)). 
\]
Here $\cD'_{\fc-\pi}(M)$  is the topological dual of the LF-space $\{ f\in C^\infty(M,\Omega_M)\ ;\  \pi(\supp{f})\text{ is compact}\}$.
}
\end{rmk}
 
\subsection{Transversal distributions}

\begin{defn}\label{defn:transversal-distribution}(\cite{AndrouSkand2011} Androulidakis-Skandalis).
A distribution $u\in\cD'(M)$ is transversal to $\pi$ if $\pi_*(u.f)\in C^\infty(B,\Omega_B)$ for any  $f\in C^\infty_{\fc-\pi}(M,\Omega_M)$.
We note $\cD'_\pi(M)$  the space of $\pi$-transversal distributions. We also set 
\[
\cE'_\pi(M) = \cD'_\pi(M)\cap\cE'(M) \text{ and } \cP'_\pi(M) = \cD'_\pi(M)\cap\cD_{\fc-\pi}'(M) 
\]
\end{defn} 
 Observe that if $u$ is $\pi$-transversal, it follows from the closed graph theorem  for LF-spaces \cite[Cor 1.2.20, p. 22]{BonetPerez} that $\pi_*(u\cdot)\in \cL(C^\infty_{\fc-\pi}(M,\Omega_M),C^\infty(B,\Omega_B))$. This gives
 \begin{prop} Denoting by $\pi_*$ the isomorphism in Theorem \ref{thm:SKT-submersion}, we have
 \begin{equation}\label{eq:transversal-distribution-and-B-linear-maps}
   \pi_*(\cD'_\pi(M)) = \cL_{C^\infty(B)} (C^\infty_{\fc-\pi}(M,\Omega_M),C^\infty(B,\Omega_B)).
 \end{equation}
 \end{prop}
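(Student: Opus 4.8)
The plan is to prove the stated equality of spaces by a pair of inclusions, leaning on the two facts already in hand: that $\pi_*$ is a topological isomorphism of $\cD'(M)$ onto $\cL_{C^\infty(B)}(C^\infty_{\fc-\pi}(M,\Omega_M),\cD'(B,\Omega_B))$ (Theorem \ref{thm:SKT-submersion}), and that for a $\pi$-transversal $u$ the closed graph theorem for LF-spaces upgrades $\pi_*(u\cdot)$ to a continuous map valued in the finer Fréchet space $C^\infty(B,\Omega_B)$ (the observation recorded just before the statement). In effect, the only thing to check is that the subspace $\cD'_\pi(M)$ is carried by $\pi_*$ exactly onto the subspace of those $C^\infty(B)$-linear maps whose range lies in $C^\infty(B,\Omega_B)$.

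For the inclusion $\pi_*(\cD'_\pi(M))\subset\cL_{C^\infty(B)}(C^\infty_{\fc-\pi}(M,\Omega_M),C^\infty(B,\Omega_B))$, I would start from $u\in\cD'_\pi(M)$. By Definition \ref{defn:transversal-distribution}, $\pi_*(u.f)\in C^\infty(B,\Omega_B)$ for every $f\in C^\infty_{\fc-\pi}(M,\Omega_M)$, so the map $\pi_*(u\cdot)$, which is already $C^\infty(B)$-linear by Theorem \ref{thm:SKT-submersion}, in fact takes values in $C^\infty(B,\Omega_B)$. The one nontrivial point is its continuity into $C^\infty(B,\Omega_B)$ endowed with its own (stronger) topology rather than the topology induced from $\cD'(B,\Omega_B)$; this is precisely the closed graph argument of the preceding observation, whence $\pi_*(u\cdot)$ belongs to $\cL_{C^\infty(B)}(C^\infty_{\fc-\pi}(M,\Omega_M),C^\infty(B,\Omega_B))$.

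For the reverse inclusion, I would take $T\in\cL_{C^\infty(B)}(C^\infty_{\fc-\pi}(M,\Omega_M),C^\infty(B,\Omega_B))$ and post-compose it with the continuous embedding $C^\infty(B,\Omega_B)\hookrightarrow\cD'(B,\Omega_B)$, thereby viewing $T$ as an element of $\cL_{C^\infty(B)}(C^\infty_{\fc-\pi}(M,\Omega_M),\cD'(B,\Omega_B))$. Since $\pi_*$ maps $\cD'(M)$ onto this space, there is a unique $u\in\cD'(M)$ with $\pi_*(u\cdot)=T$. Reading off $\pi_*(u.f)=T(f)\in C^\infty(B,\Omega_B)$ for all $f$ shows $u\in\cD'_\pi(M)$, and then $T=\pi_*(u\cdot)\in\pi_*(\cD'_\pi(M))$.

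I expect no genuine obstacle beyond the continuity upgrade in the first inclusion: transversality is phrased pointwise (the values land in $C^\infty(B,\Omega_B)$), whereas membership in $\cL_{C^\infty(B)}(C^\infty_{\fc-\pi}(M,\Omega_M),C^\infty(B,\Omega_B))$ additionally demands continuity for the finer target topology. The closed graph theorem for LF-spaces settles this, the domain $C^\infty_{\fc-\pi}(M,\Omega_M)$ being an LF-space and the target $C^\infty(B,\Omega_B)$ a Fréchet space, with the graph closed because convergence in $C^\infty(B,\Omega_B)$ forces convergence in $\cD'(B,\Omega_B)$, where $\pi_*(u\cdot)$ is already known to be continuous. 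The two inclusions are then purely formal consequences of Theorem \ref{thm:SKT-submersion}.
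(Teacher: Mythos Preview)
Your argument is correct and matches the paper's own reasoning: the paper records the closed graph theorem step just before stating the proposition (giving the inclusion $\subset$), and then regards the reverse inclusion as immediate from Theorem \ref{thm:SKT-submersion}, exactly as you spell out. There is nothing to add.
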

\begin{rmk}
 Similarly,
 \begin{equation}\label{eq:transversal-distribution-and-B-linear-maps-2}
  \pi_*(\cE'_\pi(M)) = \cL_{C^\infty(B)}(C^\infty(M,\Omega_M), C^\infty_c(B,\Omega_B)),
 \end{equation}
 \begin{equation}\label{eq:transversal-distribution-and-B-linear-maps-3} 
  \pi_*(\cP'_\pi(M)) =  \cL_{C^\infty(B)}(C^\infty(M,\Omega_M), C^\infty(B,\Omega_B)).
 \end{equation}
 In all cases, the inverse of the map $\pi_*$ is given by
\begin{equation}
 \langle \pi_*^{-1}(T) ,f\rangle = \int_B T(f) \ , \quad f\in C^\infty_{c}(M,\Omega_M).
\end{equation}	
 When $\pi :X\times Y\to X,\ (x,y)\to x$, the $\pi$-transversal distributions are exaclty the distributions semi-regular with respect to $x$, in the former terminology of  \cite[p.532]{Treves1967}.
\end{rmk}
Actually, transversal distributions are nothing else but $C^\infty$ families of distributions in the fibers of $\pi$. 
In the product case $ \pi : X\times Y\to X, (x,y)\mapsto x$, we are talking about the space $C^\infty(X,\cD'(Y))$
 $C^\infty$ functions on $X$ taking values in the topological vector space $ \cD'(Y)$ \cite{Treves1967}. Since $\cD'(Y)$ is a Montel space, the classical argument using Banach-Steinhaus Theorem shows the useful equivalence
\begin{equation}\label{eq:strong-weak-cvg-seq-product-case}
 u_n\longrightarrow u \text{ in }C^\infty(X,\cD'(Y)) \ \Leftrightarrow \ \forall f\in C^\infty_c(Y),\ \langle u_n,f\rangle\longrightarrow \langle u,f\rangle \text{ in }C^\infty(X).
\end{equation}
This space is generalized as follows for general submersions.
\begin{defn}\label{defn:family-distri-submersion}
A family $u=(u_x)_{x\in B}$  of distributions in the fibers of $\pi$ is $C^\infty$  if 
for any local trivialization   of $\pi$
\[
 U\subset M,\ X\subset B,\ \kappa : U\overset{\simeq}{\longrightarrow} X\times Y,\ \pi|_U=\pi_X\circ \kappa,
\]
we have
\(
 \kappa_*(u|_U)\in C^\infty(X,\cD'(Y)).
\)
The space of  $C^\infty$ families is noted $ C^\infty_\pi(B,\cD'(M))$. The spaces $ C^\infty_{\pi,\mathrm{cpct}}(B,\cE'(M))$ and  $ C^\infty_{\pi}(B,\cD'_{\fc-\pi}(M))$ are defined accordingly. 
\end{defn}
Using a covering of $M$ by local trivializations and a partition of unity, we use the topology of  $C^\infty(X,\cD'(Y))$ to build on $ C^\infty_\pi(B,\cD'(M))$  a complete Hausdorff locally convex vector space structure. 
Concretely, this topology is given by the semi-norms generated by the following ones expressed in local coordinates
\begin{equation}\label{eq:semin-norms-family-distrib}
 p_{k,\cB,K}(u)=\sup_{x\in K,g\in\cB} \sum_{\vert \alpha\vert \le k} \vert \langle \partial^\alpha_x u(x),g(x,\cdot)\rangle\vert
\end{equation}
where $k$ is any integer,  $K$ any compact subset included in a local chart of $B$ and $\cB$  any bounded subset  of $C^\infty_{\fc-\pi}(M,\Omega_M)$. Also, \eqref{eq:strong-weak-cvg-seq-product-case} becomes
\begin{equation}\label{eq:strong-weak-cvg-seq-gen-case}
 u_n\longrightarrow u \text{ in }C^\infty_\pi(B,\cD'(M)) \ \Leftrightarrow \ \forall f\in C^\infty_{\fc-\pi}(M,\Omega_M),\ \langle u_n,f\rangle\longrightarrow \langle u,f\rangle \text{ in }C^\infty(B,\Omega_B).
\end{equation}
 Then 
\begin{prop}\label{prop:family-to-global-distri-submersion-case}
 Using on $\cD'_\pi(M)$ the topology given by \eqref{eq:transversal-distribution-and-B-linear-maps}, the map   
 \begin{align}\label{eq:identification-I}
   C^\infty_\pi(B,\cD'(M))      &  \overset{J}{\longrightarrow}    \cD'_\pi(M)       \\
      u &  \longmapsto  ( f\mapsto  \int_B \langle u_x,f(x,\cdot)\rangle) \nonumber 
 \end{align}
 is a topological isomorphism.
\end{prop}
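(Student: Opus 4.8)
The plan is to factor $J$ through the isomorphism $\pi_*$ of Theorem~\ref{thm:SKT-submersion}, reducing the statement to an assertion about $C^\infty(B)$-linear maps. First I would record the identity $\pi_*(J(u)\,\cdot)=\Phi(u)$, where $\Phi$ sends a family $u=(u_x)$ to the map
\[
 \Phi(u) : C^\infty_{\fc-\pi}(M,\Omega_M)\longrightarrow C^\infty(B,\Omega_B),\qquad \Phi(u)(f)=\big(x\mapsto\langle u_x,f(x,\cdot)\rangle\big).
\]
This is a one-line pairing computation: for $f\in C^\infty_{\fc-\pi}(M,\Omega_M)$ and $g\in C^\infty_c(B)$ one has $\langle\pi_*(J(u)\,f),g\rangle=\langle J(u),f\cdot(g\circ\pi)\rangle=\int_B\langle u_x,f(x,\cdot)\rangle\,g(x)$, since $(f\cdot(g\circ\pi))(x,\cdot)=g(x)\,f(x,\cdot)$. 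By Theorem~\ref{thm:SKT-submersion} together with \eqref{eq:transversal-distribution-and-B-linear-maps}, $\pi_*$ restricts to a topological isomorphism from $\cD'_\pi(M)$ onto $\cL_{C^\infty(B)}(C^\infty_{\fc-\pi}(M,\Omega_M),C^\infty(B,\Omega_B))$; hence it is enough to prove that $\Phi$ is a topological isomorphism onto this space, and transversality of $J(u)$ is then automatic from the fact that $\Phi(u)$ takes values in $C^\infty(B,\Omega_B)$.

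Next I would check that $\Phi$ is well defined and continuous. The $C^\infty(B)$-linearity of $\Phi(u)$ is clear, since $\Phi(u)(f\cdot g)(x)=\langle u_x,f(x,\cdot)\rangle\,g(x)$ for $g\in C^\infty(B)$. That $\Phi(u)(f)$ is a \emph{smooth} density is exactly the content of $u$ being a $C^\infty$ family in the sense of Definition~\ref{defn:family-distri-submersion}: in a local trivialization $U\simeq X\times Y$ it is the statement $x\mapsto\langle u_x,f(x,\cdot)\rangle\in C^\infty(X)$ for $f\in C^\infty_c(X\times Y,\Omega)$, and a general $f\in C^\infty_{\fc-\pi}(M,\Omega_M)$ is reached with a partition of unity and the equivalence \eqref{eq:strong-weak-cvg-seq-gen-case}. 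Continuity is then a direct comparison of the seminorms \eqref{eq:semin-norms-family-distrib} defining $C^\infty_\pi(B,\cD'(M))$ with the seminorms of uniform convergence on bounded subsets of $C^\infty_{\fc-\pi}(M,\Omega_M)$: a bound on $p_{k,\cB,K}(u)$ is precisely a bound on the $C^k$-norms over $K$ of the densities $\Phi(u)(f)$ as $f$ ranges over $\cB$.

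The heart of the matter is the construction of the inverse, which I would carry out in a trivialization and then patch. Given $T\in\cL_{C^\infty(B)}(C^\infty_{\fc-\pi}(M,\Omega_M),C^\infty(B,\Omega_B))$ and $x_0\in B$, I define a fibrewise distribution $u_{x_0}$ by choosing, in a trivialization around $M_{x_0}=\pi^{-1}(x_0)$, any $f\in C^\infty_{\fc-\pi}(M,\Omega_M)$ with $f(x_0,\cdot)=\phi$ and setting $\langle u_{x_0},\phi\rangle=T(f)(x_0)$. Well-definedness is the main obstacle: one must show $T(f)(x_0)=0$ whenever $f(x_0,\cdot)=0$. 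Writing coordinates $X\times Y$ near $M_{x_0}$ and applying Taylor's formula with integral remainder, such an $f$ can be expressed as $f=\sum_i(\chi_i\circ\pi)\,g_i$ with $\chi_i\in C^\infty(B)$, $\chi_i(x_0)=0$, and $g_i\in C^\infty_{\fc-\pi}(M,\Omega_M)$, a cut-off supported in the chart absorbing the part of $f$ away from $M_{x_0}$; then $C^\infty(B)$-linearity gives $T(f)=\sum_i\chi_i\,T(g_i)$, which vanishes at $x_0$. The same expansion shows $\phi\mapsto\langle u_{x_0},\phi\rangle$ is continuous, so $u_{x_0}\in\cD'(M_{x_0})$, and that $(u_x)_{x\in B}$ is a $C^\infty$ family, again via \eqref{eq:strong-weak-cvg-seq-gen-case}. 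By construction $\Phi\big((u_x)\big)=T$; conversely $\Phi$ is injective because $\int_B\langle u_x,f(x,\cdot)\rangle=0$ for every $f$ forces $\langle u_x,\cdot\rangle=0$ for all $x$, testing with densities of product type concentrated near one fibre. Continuity of $T\mapsto(u_x)$ follows from the same seminorm comparison read backwards, or from the open mapping theorem for the LF-spaces involved. Finally, patching the local constructions with a partition of unity subordinate to a cover of $B$ by trivializing charts produces the global inverse and finishes the proof.
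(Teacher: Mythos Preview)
Your proof is correct and follows the same route as the paper: factor $J$ through $\pi_*$ and construct the inverse by evaluating $T$ at $x$ on an arbitrary extension $\widetilde f\in C^\infty_c(M,\Omega_M)$ of a test function on the fibre $\pi^{-1}(x)$. The paper is terser---it simply writes down $J$ and the inverse $E$ and declares the checks ``easy''---whereas you spell out the well-definedness of $E$ via Hadamard's lemma combined with $C^\infty(B)$-linearity, and make the seminorm comparison for continuity explicit; these are precisely the details the paper leaves to the reader.
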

\begin{proof}
 
 Using the identification $\cD'_\pi(M)\simeq \pi_*(\cD'_\pi(M))$, the map $J$ is given by 
 \[
   J(u)(f)(x) = \langle u_x,f(x,\cdot) \rangle ,\ u\in C^\infty_\pi(B,\cD'(M)), f\in C^\infty_{\fc-\pi}(M,\Omega_M),\ x\in B.
 \]
 Conversely, let us define
 \(
      \pi_*(\cD'_\pi(M))    \overset{E}{\longrightarrow}   C^\infty_\pi(B,\cD'(M))     
 \)
 by 
 \begin{equation}
  \langle E(T)_x ,f\rangle =  T(\widetilde{f})(x)
 \end{equation}
where  $f\in C^\infty_c(\pi^{-1}(x),\Omega_{M}\vert_{\pi^{-1}(x)})$ and $\widetilde{f}\in C^\infty_c(M,\Omega_M)$ is any $C^\infty$ extension of $f$. 
It is easy to check that $E=J^{-1}$ and that the topology given by the semi-norms \eqref{eq:semin-norms-family-distrib} on $C^\infty_\pi(B,\cD'(M))$ coincides with the one given by uniform convergence on bounded subsets for the space $\pi_*(\cD'_\pi(M)) $ through the bijection $J$.
\end{proof}

\begin{rmk}\label{rmk:some-inversion-formula}
 
 We similarly get 
 \[
  C^\infty_{\pi,\mathrm{cpct}}(B,\cE'(M))\simeq \cE'_\pi(M) \text{ and } C^\infty_{\pi}(B,\cD'_{\fc-\pi}(M))  \simeq \cP'_\pi(M)  .
 \]
 If vector bundles $E$ over $M$ and  $F$ over $B$ are given, we obtain canonical embeddings
\begin{equation}\label{eq:P-with-bundle-over-base}
 \cD'_\pi(M,E)\hookrightarrow\cD'_\pi(M,E\otimes\Endo{\pi^*F}) \simeq \cL_{C^\infty(B)}(C^\infty_{\fc-\pi}(M,\Omega_M\otimes E^*\otimes\pi^*F), C^\infty(B,\Omega_B\otimes F))
\end{equation}
and
\begin{equation}\label{eq:I-with-bundle-over-base}
   C^\infty_\pi(B,\cD'(M,E)\hookrightarrow C^\infty_\pi(B,\cD'(M,E\otimes\Endo{\pi^*F}))  \simeq   \cD'_\pi(M,E\otimes\Endo{\pi^*F}) .
 \end{equation}
\end{rmk}

\subsection{Examples of transversal distributions}\ 

Obviously, if $\pi : M\to M$ is the identity map then $\cD'_\pi(M)=C^\infty(M)$ and if $\pi$ maps $M$ to a point then $\cD'_\pi(M)=\cD'(M)$. \\
The wave front set (\cite[Chapter 8]{Horm-1}) is a powerful tool to analyse the singularities of a distribution. It can be thought of as the set of directed points in $T^*M\setminus 0$, around which the Fourier transform is not rapidly decreasing. Using wave front set
is a convenient way to check the transversality of distributions with respect to a given submersion $\pi : M\to B$, and it thus gives access to more interesting examples. Indeed,   
\begin{prop}\label{prop:global-distri-to-family-submersion-case}
  Let $W\subset T^*M\setminus 0$ be a closed cone and $\cD'_W(M)=\{ u\in\cD'(M)\ ;\ \WF{u}\subset W\}$. If   $W\cap (\ker d\pi)^\perp=\emptyset$, then 
\[
 \cD'_W(M) \subset \cD'_\pi(M).
\]
\end{prop}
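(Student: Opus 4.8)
The plan is to exploit the microlocal criterion that a distribution is smooth precisely when its wave front set is empty, and to reduce the transversality condition to a statement about $\WF{\pi_*(uf)}$. Fixing $u\in\cD'_W(M)$ and an arbitrary $f\in C^\infty_{\fc-\pi}(M,\Omega_M)$, by Definition \ref{defn:transversal-distribution} it suffices to prove that the density $\pi_*(uf)$ is smooth, i.e. that $\WF{\pi_*(uf)}=\emptyset$. Since both transversality and the wave front set are local on $B$, I would first cover $M$ by local trivializations of $\pi$ and use a partition of unity on $B$ to reduce to the product case $\pi=\pr{1}:X\times Y\to X$, where $\pi_*$ is simply integration along the fibers $Y$ and where densities reduce to functions paired against Lebesgue measure.

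Next I would record the two properties of $uf$ that feed into Hörmander's theorem. First, multiplication by the smooth density $f$ does not enlarge singularities, so $\WF{uf}\subset\WF{u}\subset W$. Second, because $f\in C^\infty_{\fc-\pi}(M,\Omega_M)$ the map $\pi:\supp{f}\to B$ is proper; as $\supp{uf}\subset\supp{f}$ is closed, $\pi$ remains proper on $\supp{uf}$, which is exactly the condition needed to define the push-forward $\pi_*(uf)$ and to apply the wave front estimate despite the fibers being non-compact.

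Then I would invoke Hörmander's bound for the wave front set under integration along the fibers \cite[Ch.~8]{Horm-1},
\[
  \WF{\pi_*(uf)}\subset\{(y,\eta)\in T^*B\setminus 0\ ;\ \exists\, m\in\pi^{-1}(y),\ (m,\transpose d\pi_m\,\eta)\in\WF{uf}\}.
\]
The decisive point is then purely linear-algebraic: since $\pi$ is a submersion, $d\pi_m$ is surjective, hence $\transpose d\pi_m$ is injective with $\im\transpose d\pi_m=(\ker d\pi_m)^\perp$. Consequently, for every $\eta\neq 0$ the covector $\transpose d\pi_m\,\eta$ is non-zero and lies in $(\ker d\pi)^\perp$, so $(m,\transpose d\pi_m\,\eta)\in(\ker d\pi)^\perp\setminus 0$. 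By the hypothesis $W\cap(\ker d\pi)^\perp=\emptyset$ together with $\WF{uf}\subset W$, no such point can belong to $\WF{uf}$; therefore the right-hand side is empty, $\WF{\pi_*(uf)}=\emptyset$, and $\pi_*(uf)\in C^\infty(B,\Omega_B)$. As $f$ was arbitrary, this is exactly the transversality of $u$.

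The main obstacle I anticipate is not the linear algebra but the careful justification of the push-forward wave front estimate in this generality: one must ensure that integration along non-compact fibers is legitimate, which is guaranteed by the properness built into $C^\infty_{\fc-\pi}$, and that the density factors, which the wave front set ignores, are correctly handled by passing to the local trivializations above. A minor point to verify is the identification $\im\transpose d\pi_m=(\ker d\pi_m)^\perp$ and the locality of the estimate on $B$, so that checking emptiness of the wave front set chart by chart indeed suffices.
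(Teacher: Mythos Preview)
Your proposal is correct and follows essentially the same approach as the paper: both arguments reduce to the push-forward wave front set estimate $\WF{\pi_*(uf)}\subset(d\pi)_*(\WF{uf})$ (the paper cites \cite[(3.6), p.~328]{GuilleStenb1977}, you cite \cite[Ch.~8]{Horm-1}) and then use the identification $(\ker d\pi)^\perp=\{(m,\zeta):\zeta\in\im\transpose d\pi_m\}$ together with the hypothesis $W\cap(\ker d\pi)^\perp=\emptyset$ to conclude that $\WF{\pi_*(uf)}=\emptyset$. Your additional remarks on localization, properness of $\pi\vert_{\supp f}$, and density bookkeeping are correct but are not a different route---the paper simply leaves these as understood.
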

\begin{proof}
 We apply the formula (3.6) p. 328 of \cite{GuilleStenb1977}: 
\[
 \WF{\pi_*(u.f)}\subset (d\pi)_*(\WF{u.f})\subset (d\pi)_*(\WF{u})=\{(x,\xi)\ ;\ x=\pi(m)\ , (m,{}^td\pi_m(\xi))\in \WF{u}\}.
\]
Since  $(\ker d\pi)^\perp= \{(m,\zeta)\ ;\   \zeta\in\im({}^td\pi_m) \}$, we obtain $\WF{\pi_*(u.f)}=\emptyset$, and thus $\pi_*(u.f)$ is smooth. 
\end{proof}
For instance, consider a section of $\pi$, that is a submanifold $X\subset M$ such that $\pi : X\to B$ is a diffeomorphism onto an open subset of $B$. Let $\omega\in \Omega(X)$ be any $C^\infty$ density and define $l_\omega\in\cD'(M,\Omega_M)$ by 
\begin{equation}\label{eq:densities-on-sections}
 \langle l_\omega, f \rangle =\int_X f\omega .
\end{equation}
Then $l_\omega\in\cD'_\pi(M,\Omega_M)$, for $\WF{l_\omega}\subset N^*(X)$  (see \cite[Example 8.2.5]{Horm-1}) and $N^*(X)\cap (\ker d\pi)^\perp =X\times \{0\}$. Alternatively, it is easy to check that $\pi_*(l_\omega.f)$ is given by the $C^\infty$ density $\pi_*(\omega f\vert_X)$.  Of course, for any differential operator $P$ on $M$, we still have $Pl_\omega\in \cD'_\pi(M,\Omega_M)$, for $\WF{Pu}\subset \WF{u}$ for any distribution $u$. Actually, this gives all instances of transversal distributions supported within a section. Indeed, let $u\in \cE'_\pi(M,\Omega_M)$ such that $\supp{u}\subset X$. It is no restriction to work in a local trivialization, that is to assume $\pi :M=X\times \RR^n\to X,\ (x,y)\mapsto x$ and identify $X\simeq X\times\{0\}$. By \cite[Theorem 2.3.5]{Horm-1}, we have
\begin{equation}
 \langle u,\phi \rangle = \sum_{\vert\alpha\vert\le k} \langle u_\alpha,(\partial^\alpha_y\phi)(\cdot,0) \rangle, \qquad \forall \phi\in C^\infty_c(X\times \RR^n)
\end{equation}
where $k$ is the order of $u$ and  $u_\alpha\in \cD'(X)$ has order $k-\vert\alpha\vert$. It follows that 
\begin{equation}
 C^\infty(X)\ni \pi_*(fu) = \sum_{\vert\alpha\vert\le k}  (\partial^\alpha_y f)(\cdot,0).u_\alpha,\quad \forall f\in C^\infty(X\times \RR^n).
\end{equation}
Selecting $f=y^\alpha$  shows that  $u_\alpha$ is $C^\infty$.
We have proved
\begin{prop}\label{prop:transversal-layers}
 Let $u\in \cE'(M,\Omega_M)$ such that $\supp{u}\subset X$, $X$ being a section of $\pi$. Then $u\in\cE'_\pi(M,\Omega_M)$ if and only if $u$ is a finite sum of distributions obtained by differentiation along the fibers of $\pi$ of distributions of the kind \eqref{eq:densities-on-sections}.
\end{prop}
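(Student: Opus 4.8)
The plan is to prove the two implications separately, the forward (``only if'') direction being the substantial one.

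For the ``if'' direction I would argue that every distribution of the form $l_\omega$ is $\pi$-transversal, because $\WF{l_\omega}\subset N^*(X)$ and $N^*(X)\cap(\ker d\pi)^\perp = X\times\{0\}$, so that Proposition~\ref{prop:global-distri-to-family-submersion-case} applies. Differentiating along the fibers of $\pi$ only shrinks the wave front set, since $\WF{Pu}\subset\WF{u}$ for any differential operator $P$, so any fiber derivative of an $l_\omega$ is again transversal. As $\cD'_\pi(M)$ is a linear subspace and the distributions in question are compactly supported (being built from compactly supported densities on $X$), every finite sum of such fiber derivatives lies in $\cE'_\pi(M,\Omega_M)$.

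For the ``only if'' direction, let $u\in\cE'_\pi(M,\Omega_M)$ with $\supp{u}\subset X$. Since the conclusion is local and both hypothesis and conclusion are stable under a partition of unity subordinate to a cover by local trivializations, I would reduce to the model $\pi:M=X\times\RR^n\to X$, $(x,y)\mapsto x$, with $X$ identified to $X\times\{0\}$. The first key input is Hörmander's structure theorem for distributions carried by a submanifold (\cite[Theorem~2.3.5]{Horm-1}): it produces an integer $k$ (the order of $u$) and distributions $u_\alpha\in\cD'(X)$, $|\alpha|\le k$, with
\[
  \langle u,\phi\rangle = \sum_{|\alpha|\le k}\langle u_\alpha,(\partial^\alpha_y\phi)(\cdot,0)\rangle,\qquad\forall\phi .
\]
The second, decisive input is transversality itself. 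Computing $\pi_*(fu)$ for $f\in C^\infty(X\times\RR^n)$ gives $\pi_*(fu)=\sum_{|\alpha|\le k}(\partial^\alpha_y f)(\cdot,0)\,u_\alpha$, which by hypothesis is a $C^\infty$ function on $X$. Evaluating on the monomials $f=y^\alpha$ isolates the terms one at a time, since $(\partial^\beta_y y^\alpha)(\cdot,0)=\alpha!\,\delta_{\alpha\beta}$, whence $u_\alpha=\tfrac{1}{\alpha!}\,\pi_*(y^\alpha u)\in C^\infty(X)$. Thus each $u_\alpha$ is smooth, i.e. of the form $l_{\omega_\alpha}$ for a smooth density $\omega_\alpha$ on $X$, and unwinding the decomposition exhibits $u$ as a finite sum of fiber derivatives of distributions of type~\eqref{eq:densities-on-sections}.

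The step requiring the most care is not the analysis but the bookkeeping that makes these two inputs fit together coherently and globally. One must: (i) phrase everything correctly for $\Omega_M$-valued distributions, so that the coefficients $u_\alpha$ genuinely define densities $\omega_\alpha$ on $X$ and the combinatorial factors coming from $\partial^\alpha_y$ are tracked; (ii) check that the locally produced data patch, i.e. that the finite family $(u_\alpha)$ and the order $k$ can be chosen consistently across overlapping trivializations so that the resulting ``differentiation along the fibers'' is intrinsic and independent of the chosen fiber coordinate $y$; and (iii) justify inserting the non-compactly-supported test functions $f=y^\alpha$ into $\pi_*(fu)$, which is legitimate because $u$ has compact support, so that the transversal pairing extends to all smooth $f$ (cf.\ the characterization \eqref{eq:transversal-distribution-and-B-linear-maps-2}) and one may freely multiply by a cutoff equal to $1$ near $\supp{u}$. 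Once these points are settled, the equivalence follows, the ``only if'' direction combining with the ``if'' direction already established to complete the proof.
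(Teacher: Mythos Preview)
Your proof is correct and follows essentially the same approach as the paper: the ``if'' direction via the wave front inclusion $\WF{l_\omega}\subset N^*(X)$ and Proposition~\ref{prop:global-distri-to-family-submersion-case}, and the ``only if'' direction via local trivialization, H\"ormander's structure theorem \cite[Theorem~2.3.5]{Horm-1}, and testing against $f=y^\alpha$ to force each $u_\alpha$ to be smooth. The paper is terser about your bookkeeping points (i)--(iii), simply declaring that working in a local trivialization is ``no restriction,'' but the substance is identical.
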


\begin{rmk}\label{rmk:transv-subm-not-imply-transv-wf}
 $u\in\cD'_\pi(M)$ does not imply $\WF{u}\cap\ker d\pi^\perp=\emptyset$. Indeed, consider $\pi : \RR\times\RR\to\RR, (x,y)\mapsto x$ and define $u\in C^\infty(\RR,\cD'(\RR))$ by 
\begin{equation}
 \langle u,\phi \rangle(x) = \sqrt{2\pi}\int \chi(\eta)\vert\eta\vert e^{-\eta^2x^2/2} \hat{\phi}(-\eta)d\eta
\end{equation}
where $\chi$ is $C^\infty$, $\chi(\eta)=1$ if $\vert\eta\vert\ge 1$ and  $\chi(\eta)=0$ if $\vert\eta\vert\le 1/2$. Since 
\(
 \hat{u}(\xi,\eta) = \chi(\eta)  e^{-\xi^2/(2\eta^2)}
\) we conclude $\WF{u}\cap(\ker d\pi)^\perp\not=\emptyset$ (\cite[Section 8.1]{Horm-1}).
\end{rmk}

It is not obvious to us how to characterize transversal distributions whose wave front set avoids $(\ker d\pi)^\perp$. We give in the following lemma a sufficient condition.  
\begin{lem}\label{lem:some-families-to-global-distri}
Let $v \in \cD'_{\pi_X}(X\times Y)$ and assume that there exists constants $d\in\NN$ and $\delta\in[0,1)$ such that for any compact subset $K$ of $Y$ and multi-index $\beta\in\NN^{n_X}$, one can find a constant $C_{K\beta}$ such that 
\begin{equation}\label{eq:fam-to-global-assumption}
 |\langle \partial^\beta v_x, f \rangle| \le C_{K\beta}\| f\|_{K,d+\delta|\beta|},\quad \forall f\in C^\infty_c(Y), \ x\in X.
\end{equation}
Here, we have set $\| f\|_{K,d+\delta|\beta|} = \sum_{|\alpha|\le d+\delta|\beta|}\sup_K|\partial^\alpha f|$.
Then we have $\WF{v}\subset (\ker d\pi_Y)^\perp$.\\
In particular,  $\WF{v}\cap (\ker d\pi_X)^\perp =\emptyset$.
\end{lem}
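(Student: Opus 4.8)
The statement is microlocal, so I would argue pointwise on $T^*(X\times Y)\setminus 0$ and trivialise the density bundle, which does not affect wave front sets. Since $(\ker d\pi_Y)^\perp=\{(x,y;\xi,\eta):\xi=0\}$, proving $\WF{v}\subset(\ker d\pi_Y)^\perp$ amounts to showing that every point $(x_0,y_0;\xi_0,\eta_0)$ with $\xi_0\ne 0$ lies off $\WF{v}$. Once this is done the \emph{in particular} clause is immediate: $(\ker d\pi_X)^\perp=\{\eta=0\}$ meets $(\ker d\pi_Y)^\perp$ only along the zero section, which is excluded from any wave front set, so $\WF{v}\subset\{\xi=0\}$ forces $\WF{v}\cap(\ker d\pi_X)^\perp=\emptyset$.

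Fix such a point with $\xi_0\ne 0$ and choose a product cut-off $\chi(x,y)=\phi(x)\psi(y)$ with $\phi\in C^\infty_c(X)$, $\psi\in C^\infty_c(Y)$, $\phi(x_0)\psi(y_0)\ne 0$, and set $K=\supp{\psi}$. Writing $v$ as the smooth family $(v_x)$ furnished by Proposition \ref{prop:family-to-global-distri-submersion-case}, I would compute the Fourier transform of the compactly supported distribution $\chi v$ as an iterated transform: with
\[
F_\eta(x) := \phi(x)\,\langle v_x,\ \psi\, e^{-iy\cdot\eta}\rangle ,
\]
the family description gives $\widehat{\chi v}(\xi,\eta)=\widehat{F_\eta}(\xi)$, the $X$-Fourier transform of the smooth compactly supported function $F_\eta\in C^\infty_c(X)$ (smoothness in $x$ comes from the smoothness of the family and of $\chi$).

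The heart of the argument is to bound the $x$-derivatives of $F_\eta$ uniformly in $x$. Leibniz's rule gives $\partial^\beta F_\eta(x)=\sum_{\gamma\le\beta}\binom{\beta}{\gamma}(\partial^{\beta-\gamma}\phi)(x)\,\langle \partial^\gamma v_x,\ \psi\,e^{-iy\cdot\eta}\rangle$, and for each term the hypothesis \eqref{eq:fam-to-global-assumption} yields
\[
|\langle \partial^\gamma v_x,\ \psi\,e^{-iy\cdot\eta}\rangle| \le C_{K\gamma}\,\|\psi\,e^{-iy\cdot\eta}\|_{K,\,d+\delta|\gamma|} \le C\,(1+|\eta|)^{d+\delta|\gamma|},
\]
the last inequality because each $y$-derivative of $e^{-iy\cdot\eta}$ produces a factor of size $|\eta|$ and at most $\lfloor d+\delta|\gamma|\rfloor$ of them occur. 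Since $|\gamma|\le|\beta|$ and the constants in \eqref{eq:fam-to-global-assumption} are uniform in $x$, this gives $\sup_x|\partial^\beta F_\eta(x)|\le C_\beta(1+|\eta|)^{d+\delta|\beta|}$. Integrating by parts in $x$, i.e. $(i\xi)^\beta\widehat{F_\eta}(\xi)=\widehat{\partial^\beta F_\eta}(\xi)$, and summing over $|\beta|=m$ then produces, for $|\xi|\ge 1$,
\[
|\widehat{\chi v}(\xi,\eta)| \le C_m\,(1+|\eta|)^{d+\delta m}\,|\xi|^{-m}.
\]

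To conclude I would choose the conic neighbourhood $\Gamma$ of $(\xi_0,\eta_0)$ small enough that $|\eta|\le C|\xi|$ on $\Gamma$; this is possible precisely because $\xi_0\ne 0$. On $\Gamma$, for $|\xi|\ge1$, the estimate becomes $|\widehat{\chi v}(\xi,\eta)|\le C_m'\,|\xi|^{\,d-(1-\delta)m}$, and since $\delta<1$ the exponent $d-(1-\delta)m\to-\infty$ as $m\to\infty$. Hence for every $N$ one can pick $m$ making this $O(|\xi|^{-N})$, which on $\Gamma$ (where $|\xi|$ is comparable to $|(\xi,\eta)|$) is $O((1+|\xi|+|\eta|)^{-N})$; this shows $(x_0,y_0;\xi_0,\eta_0)\notin\WF{\chi v}$ and, as $\chi(x_0,y_0)\ne0$, that $(x_0,y_0;\xi_0,\eta_0)\notin\WF{v}$. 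I expect the only delicate points to be the bookkeeping of the anisotropic trade-off in the exponent $d+\delta|\beta|-|\beta|$, where the assumption $\delta\in[0,1)$ is used essentially, and the uniformity in $x$ of the constants, which is exactly what \eqref{eq:fam-to-global-assumption} provides.
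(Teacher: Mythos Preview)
Your proof is correct and follows essentially the same strategy as the paper: write the localized Fourier transform as an iterated transform (first pair in $y$, then transform in $x$), bound the $x$-derivatives via the hypothesis \eqref{eq:fam-to-global-assumption}, and trade them for decay in $|\xi|$ by integration by parts, using $\delta<1$ to make the net exponent go to $-\infty$. Your choice of a product cut-off $\chi=\phi(x)\psi(y)$ in fact streamlines one step compared to the paper, since $\psi e^{-iy\cdot\eta}\in C^\infty_c(Y)$ lets you apply \eqref{eq:fam-to-global-assumption} directly, whereas the paper works with a general cut-off and therefore invokes a Paley--Wiener--Schwartz argument with an auxiliary $\epsilon$-cutoff to estimate $\widehat{\partial^N_{x_j}\varphi_x v_x}(\eta)$.
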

\begin{rmk}
 \rm{Distributions in Proposition \ref{prop:transversal-layers} satisfy the assumption of the lemma with $\delta=0$.}
\end{rmk}

\begin{proof}[Proof of the lemma]
Let us fix $(x_0,y_0,\xi_0,\eta_0)\not\in (\ker d\pi_Y)^\perp$, that is, $\xi_0\not=0$ and assume that $|(\xi_0,\eta_0)|=1$. We work below  in a conic neighborhood $\Gamma$ of $(x_0,y_0,\xi_0,\eta_0)$ such that for all $(x ,y,\xi,\eta)\in\Gamma$ with $|(\xi,\eta)|=1$, we have $|\xi_j|\ge |\xi_{0j}|/2$ for some fixed $j$ such that $\xi_{0j}\not=0$.

Let $(x ,y,\xi,\eta)\in\Gamma$ be such that $|(\xi,\eta)|=1$ and $\varphi(x,y)$ be supported in a compact neighborhood 
$K\times L$ of $(x_0,y_0)$ in $X\times Y$. Denoting $\varphi_x=\varphi(x,\cdot)$, we have for any $N>0$
\begin{eqnarray}\label{eqn:WF-computation-1}
|\langle v,\varphi e^{-it\langle (\cdot,\cdot),(\xi,\eta)\rangle}\rangle| &=& |\int \scal{v_x}{\varphi_xe^{-it\scal{(x,\cdot)}{(\xi,\eta)}}}dx| 
 = |\int \widehat{\varphi_xv_x}(t\eta)e^{-it\scal{x}{ \xi}}dx| \nonumber  \\
 &\le& C.\left(\sum_{|\alpha|\le N} \sup_{x\in L}|\partial^\alpha_x\widehat{\varphi_xv_x}(t\eta) ||\xi|^{|\alpha|-2N}\right)t^{-N}  \text{ by \cite[Theorem 7.7.1]{Horm-1}}.
\end{eqnarray}
Moreover, since $v :x\mapsto v_x$ is $C^\infty$, we have 
\[ 
\partial^N_{x_j} \widehat{\varphi_xv_x}(t\eta) = \partial^N_{x_j} \langle \varphi_xv_x,  e^{-it\scal{\cdot}{ \eta}}\rangle
=\langle \partial^N_{x_j}\varphi_xv_x,  e^{-it\scal{\cdot}{ \eta}}\rangle = \widehat{\partial^N_{x_j}\varphi_xv_x}(t\eta).
\]

We note $K_\epsilon=\{y+z;\ y\in K,|z|<\epsilon\}$ for any $\epsilon>0$ and let $\chi_\epsilon\in C^\infty_c(K_\epsilon)$  be such that $\chi_\epsilon=1$ on $K_{\epsilon/2}$.  If $H(\eta)$ denotes the supporting function of $K$ \cite[4.3.1]{Horm-1}, we get using the assumption \eqref{eq:fam-to-global-assumption} and the  proof of the Paley-Wiener-Schwartz Theorem in \cite[7.3.1]{Horm-1}
\begin{eqnarray*}
 |\widehat{\partial^N_{x_j}\varphi_xv_x}(\eta)| 
       &=& |\partial^N_{x_j}\varphi_xv_x(\chi_\epsilon e^{-i \scal{\cdot}{ \eta}})|
     \le  C_{K_{\epsilon}N}\sum_{|\beta|\le d+\delta N}\sup|\partial^\beta(\chi_\epsilon e^{-i \scal{\cdot}{ \eta}})|\\
 &\le & C_{K_{\epsilon}N}.C.e^{H(0)}. \sum_{|\beta|\le d+\delta N} \epsilon^{-\beta}(1+|\eta|)^{d+\delta N-|\beta|}.
\end{eqnarray*}
With $\epsilon=1/(1+|\eta|)$ and using the inequalities $C_{K_{\epsilon}N}\le C_{K_{\epsilon'}N}$ if $\epsilon<\epsilon'$, we obtain
 \begin{equation}\label{eq:some-families-to-global-distri-3}
 |\widehat{\partial^N_{x_j}\varphi_xv_x}(\eta)| 
 \le  C_{K_{1}N}.C.(1+|\eta|)^{d+\delta N} 
 \le  C'_{KN}(1+|\eta|)^{d+\delta N}.
\end{equation}
Using uniform estimates $|\xi|\ge c_1>0$ and  $(1+|t\eta|)\le c_2t$ for  $(\xi,\eta)\in\Gamma,\ |(\xi,\eta)|=1$ and the estimate \eqref{eq:some-families-to-global-distri-3} applied to \eqref{eqn:WF-computation-1}, we get
\begin{eqnarray*}
 |\langle v,\varphi e^{-it\langle(\cdot,\cdot),(\xi,\eta)\rangle}\rangle |
   &\le &  C. t^{d+(\delta-1)N}.
\end{eqnarray*}
since $\delta-1<0$, we conclude that $(x_0,y_0,\xi_0,\eta_0)\not\in\WF{v}$. 
\end{proof}

\subsection{Operations on transversal distributions} \ \\
One has obviously, following the definitions.
\begin{prop}\label{prop:restriction-to-sub-submersion}
 Let $E$ be a $C^\infty$ vector bundle over $M$, $C$ a submanifold of $B$ and $\pi_C:\pi^{-1}(C)\to C$ the restriction of the submersion $\pi: M\to B$. The restriction of distributions
\[
\mathrm{Rest}_C : \cD'_{\pi}(M,E)\longrightarrow\cD'_{\pi_C}( \pi^{-1}(C),E\vert_{\pi^{-1}(C)})
\]
is well defined and continuous. 
\end{prop}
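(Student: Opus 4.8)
The plan is to prove the proposition \emph{without} ever trying to restrict $u$ as a distribution to the submanifold $\pi^{-1}(C)\subset M$ directly. Such a naive restriction is in general impossible here: by Remark \ref{rmk:transv-subm-not-imply-transv-wf} a $\pi$-transversal distribution may have wave front set meeting $(\ker d\pi)^\perp$, and hence the conormal $N^*(\pi^{-1}(C))\subset(\ker d\pi)^\perp$, so H\"ormander's restriction theorem does not apply. Instead I would read the restriction entirely through the identification of transversal distributions with $C^\infty$ families of fibre distributions, where restriction takes place only in the base parameter.

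First I would record the geometric preliminaries. Since $\pi$ is a submersion it is transverse to $C$, so $\pi^{-1}(C)$ is a submanifold of $M$ of the same codimension as $C$ in $B$, and $\pi_C:\pi^{-1}(C)\to C$ is again a submersion. The key point is that its fibres coincide with the fibres of $\pi$ lying over $C$: for every $c\in C$ one has $\pi_C^{-1}(c)=\pi^{-1}(c)$ as manifolds, and $(E|_{\pi^{-1}(C)})|_{\pi_C^{-1}(c)}=E|_{\pi^{-1}(c)}$ as coefficient bundles. Using Proposition \ref{prop:family-to-global-distri-submersion-case} together with its bundle version \eqref{eq:I-with-bundle-over-base}, I identify $u\in\cD'_\pi(M,E)$ with a $C^\infty$ family $(u_x)_{x\in B}$, and simply set $\mathrm{Rest}_C(u)=(u_c)_{c\in C}$, the restriction of the parameter to $C$. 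By the preceding observation each $u_c$ is already a distribution on the fibre $\pi_C^{-1}(c)$ with the correct coefficients, so the only thing left to check is that $(u_c)_{c\in C}$ is a $C^\infty$ family for $\pi_C$ in the sense of Definition \ref{defn:family-distri-submersion}; Proposition \ref{prop:family-to-global-distri-submersion-case} applied to $\pi_C$ then produces the desired element of $\cD'_{\pi_C}(\pi^{-1}(C),E|_{\pi^{-1}(C)})$.

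Since smoothness of a family is local and tested in trivializations, I would pick around any point of $\pi^{-1}(C)$ a trivialization $U\simeq X\times Y$ of $\pi$ adapted to $C$, i.e.\ with coordinates $x=(x',x'')$ on $X\subset B$ such that $C\cap X=\{x'=0\}$ and $x''$ parametrizes $C$ locally. Restricting this chart to $\pi^{-1}(C)\cap U$ yields a trivialization $(C\cap X)\times Y$ of $\pi_C$. In it $u$ is represented by a smooth map $x\mapsto u_x$ in $C^\infty(X,\cD'(Y))$, and $\mathrm{Rest}_C(u)$ by its pullback $x''\mapsto u_{(0,x'')}$ along the smooth embedding $X''\hookrightarrow X$. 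As a composite of two smooth maps this lies in $C^\infty(X'',\cD'(Y))$, which is exactly the defining condition for a $C^\infty$ family of $\pi_C$; since any trivialization of $\pi_C$ differs from such a restricted one by a smooth change of coordinates, $\mathrm{Rest}_C$ is well defined. For continuity I would invoke the explicit semi-norms \eqref{eq:semin-norms-family-distrib}: those of the target involve only tangential derivatives $\partial^{\alpha''}_{x''}$ over compacts $K''\subset C\cap X$ paired against bounded families, and each such derivative is the restriction to $\{x'=0\}$ of the ambient derivative $\partial^{(0,\alpha'')}_x$; choosing a compact $K\subset X$ with $K''\subset K$ and noting that bounded test families restrict to bounded ones gives a local estimate of the target semi-norms by source semi-norms, whence continuity on each chart and, after gluing with a partition of unity, globally. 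Alternatively one may argue directly from the sequential characterization \eqref{eq:strong-weak-cvg-seq-gen-case}.

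The genuine obstacle is conceptual rather than computational, and I would stress it explicitly: $\mathrm{Rest}_C$ cannot be interpreted as the ordinary distributional restriction to $\pi^{-1}(C)$, precisely because of the wave front phenomenon of Remark \ref{rmk:transv-subm-not-imply-transv-wf}. Everything works because $\pi$-transversality \emph{is} $C^\infty$-dependence in the directions transverse to the fibres of $\pi$, and these are exactly the directions along which $C$ is cut out inside $B$; thus restricting the base parameter of a smooth family is legitimate where restricting the distribution would not be. Once this reading is adopted, the verification reduces to the elementary fact that a smooth $\cD'(Y)$-valued map restricts smoothly to a submanifold of its domain.
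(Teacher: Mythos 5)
Your reading of the statement is the right one, and it matches the paper's intent exactly: the paper offers no proof at all (the proposition is introduced with ``one has obviously, following the definitions''), and the definition-chasing argument it has in mind is precisely the one you spell out --- identify $u\in\cD'_\pi(M,E)$ with a $C^\infty$ family $(u_x)_{x\in B}$ via Proposition \ref{prop:family-to-global-distri-submersion-case}, restrict the parameter to $C$, and check smoothness of the restricted family in trivializations adapted to $C$. Your cautionary remark that $\mathrm{Rest}_C$ is \emph{not} H\"ormander's restriction of distributions to the submanifold $\pi^{-1}(C)$ --- since by Remark \ref{rmk:transv-subm-not-imply-transv-wf} the wave front set of a $\pi$-transversal distribution may meet $(\ker d\pi)^\perp$, which contains $N^*(\pi^{-1}(C))$ --- is correct, and is a genuinely useful clarification that the paper leaves implicit.

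One step of your continuity argument is stated backwards. To dominate a target semi-norm $p_{k,\cB',K''}$ of the form \eqref{eq:semin-norms-family-distrib}, where $\cB'$ is an \emph{arbitrary} bounded subset of $C^\infty_{\fc-\pi_C}(\pi^{-1}(C),\Omega)$, you must produce a bounded subset $\cB$ of $C^\infty_{\fc-\pi}(M,\Omega_M)$ whose elements restrict, on $\{x'=0\}$, to the elements of $\cB'$; that is, you need an \emph{extension} operator carrying bounded families to bounded families. In the local model this is standard: $g'(x'',y)\mapsto \chi(x')\,g'(x'',y)$ with $\chi$ a fixed cutoff equal to $1$ near $x'=0$, which is linear, continuous, and compatible with the fiberwise-properness condition on supports, so with $K=\{0\}\times K''$ one gets $p_{k,\cB',K''}(\mathrm{Rest}_C(u))\le p_{k,\cB,K}(u)$. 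The fact you invoke instead --- that bounded test families on $M$ restrict to bounded families on $\pi^{-1}(C)$ --- goes in the wrong direction: it only controls the target semi-norms indexed by families obtained as restrictions, not all of them. The same caveat applies to your proposed fallback via \eqref{eq:strong-weak-cvg-seq-gen-case}: sequential continuity of a linear map between these non-metrizable spaces does not by itself give continuity. With the cutoff-extension inserted, your proof is complete and is exactly the argument the paper compresses into one word, ``obviously''.
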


\begin{prop}\label{prop:obvious-pull-back}
 Let  $\rho : Z\to M$ and $\pi : M\to B$ be surjective submersions. Let $E$ be a $C^\infty$ vector bundle over $M$. The pull back of distributions restricts to a continuous map
\[
 \rho^*: \cD'_{\pi}(M,E)\longrightarrow\cD'_{\pi\circ\rho }(Z,\rho^*E).
\]
\end{prop}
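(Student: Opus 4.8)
The plan is to realize $\rho^*$ as the transpose of fibre integration along $\rho$ and to reduce the $(\pi\circ\rho)$-transversality of $\rho^* u$ to the $\pi$-transversality of $u$ by means of a single push-forward identity. Since $\rho$ is a submersion, ${}^t d\rho_z$ is injective and the set of normals of $\rho$ meets $T^*M\setminus 0$ trivially, so the pull-back $\rho^*\colon\cD'(M,E)\to\cD'(Z,\rho^*E)$ is defined on \emph{all} distributions and is characterised by the adjunction
\[
\langle\rho^* u,g\rangle=\langle u,\rho_* g\rangle,\qquad g\in C^\infty_c(Z,\Omega_Z\otimes\rho^*E^*),
\]
where $\rho_*\colon C^\infty_c(Z,\Omega_Z\otimes\rho^*E^*)\to C^\infty_c(M,\Omega_M\otimes E^*)$ is integration over the fibres of $\rho$ (this is canonical, since the coefficient bundle $\rho^*E^*$ is constant along those fibres, so no connection is needed). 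I would first record the two elementary properties of $\rho_*$ that drive everything: it is continuous, and it is $C^\infty(B)$-linear for the module structures $\psi\cdot g=(\psi\circ\pi\circ\rho)\,g$ on $Z$ and $\psi\cdot f=(\psi\circ\pi)\,f$ on $M$ ($\psi\in C^\infty(B)$), the identity $\rho_*(\psi\cdot g)=\psi\cdot\rho_* g$ being exactly the projection formula for fibre integration.

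The heart of the argument is the identity
\[
(\pi\circ\rho)_*\big((\rho^* u)\cdot g\big)=\pi_*\big(u\cdot\rho_* g\big),\qquad g\in C^\infty_{\fc-(\pi\circ\rho)}(Z,\Omega_Z\otimes\rho^*E^*).
\]
To prove it I would test both sides against an arbitrary $h\in C^\infty_c(B)$: using $(\pi\circ\rho)_*=\pi_*\circ\rho_*$, the defining formula of the push-forward, the adjunction for $\rho^*$, and then the projection formula with $\psi=h$, the left-hand side unfolds to $\langle u,(\rho_* g)\cdot(h\circ\pi)\rangle$, which is precisely the pairing of the right-hand side with $h$. Granting that $\rho_* g\in C^\infty_{\fc-\pi}(M,\Omega_M\otimes E^*)$, the $\pi$-transversality of $u$ then forces the right-hand side to lie in $C^\infty(B,\Omega_B)$, whence $\rho^* u\in\cD'_{\pi\circ\rho}(Z,\rho^*E)$.

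The one genuine point to check — and the step I expect to be the main obstacle — is the support bookkeeping showing that $\rho_*$ maps $C^\infty_{\fc-(\pi\circ\rho)}$ into $C^\infty_{\fc-\pi}$. Here $\supp{\rho_* g}\subset\rho(\supp{g})$, and I would verify directly that if $\pi\circ\rho$ is proper on $\supp{g}$ then $\pi$ is proper on $\supp{\rho_* g}$: given a compact $K\subset B$ and a compact neighbourhood $K'$ of $K$, any point of $\pi^{-1}(K)\cap\supp{\rho_* g}$ is a limit of points $\rho(z_n)$ with $z_n\in(\pi\circ\rho)^{-1}(K')\cap\supp{g}$, a compact set; hence it lies in the compact set $\rho\big((\pi\circ\rho)^{-1}(K')\cap\supp{g}\big)$, and being also closed, $\pi^{-1}(K)\cap\supp{\rho_* g}$ is compact.

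Finally, for continuity I would pass to the operator pictures of \eqref{eq:transversal-distribution-and-B-linear-maps}: under the isomorphisms $\pi_*$ and $(\pi\circ\rho)_*$ the above identity says that $\rho^*$ becomes the precomposition $T\mapsto T\circ\rho_*$ with the fixed continuous $C^\infty(B)$-linear map $\rho_*$, which is continuous for the topologies of uniform convergence on bounded sets since $\rho_*$ sends bounded sets to bounded sets. The vector-bundle coefficients intervene only through $\rho^*E$ and the canonical fibrewise integration of $\rho^*E^*$-valued densities, so the whole discussion carries over verbatim, or may be reduced to the scalar case by trivialising $E$ over coordinate patches of $M$ and using a partition of unity; alternatively, one may phrase the entire proof through Proposition \ref{prop:family-to-global-distri-submersion-case}, identifying $\rho^* u$ with the smooth family $\big((\rho_x)^* u_x\big)_{x\in B}$ of fibrewise pull-backs along the submersions $\rho_x\colon(\pi\circ\rho)^{-1}(x)\to\pi^{-1}(x)$.
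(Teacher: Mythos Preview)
Your proof is correct and follows essentially the same route as the paper: both define $\rho^*$ as the transpose of fibre integration $\rho_*$ and reduce the statement to the fact that $\rho_*$ maps $C^\infty_{\fc-(\pi\circ\rho)}(Z,\Omega_Z\otimes\rho^*E^*)$ continuously into $C^\infty_{\fc-\pi}(M,\Omega_M\otimes E^*)$. The paper's proof is a two-line sketch (``$\rho_*$ is well defined and continuous; $\rho^*(u)=u\circ\rho_*$; the proposition follows''), whereas you have carefully unpacked the push-forward identity $(\pi\circ\rho)_*((\rho^*u)\cdot g)=\pi_*(u\cdot\rho_*g)$, verified the support bookkeeping that the paper merely asserts, and made the continuity explicit via the operator picture \eqref{eq:transversal-distribution-and-B-linear-maps}; these are precisely the details one would fill in to justify the paper's sketch.
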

\begin{proof}
 Since $\rho$ is a submersion, the map 
\[
\begin{matrix}
 \rho_* : & C^\infty_{\fc-\pi\circ\rho}(Z,\Omega_Z\otimes\rho^*E^*)& \longrightarrow & C^\infty_{\fc-\pi}(M,\Omega_M\otimes E^*) \\
         &    f & \longmapsto & (m\mapsto \int_{\rho^{-1}(m)}f)
\end{matrix}
\]
is well defined and  continuous. Since  $\rho^*(u)=u\circ \rho_*$, the proposition follows.  
\end{proof}
 Let $\pi_i : M_i\longrightarrow B$, $i=1,2$ be two submersions and define 
 \[
\pi : M_1\underset{\pi}{\times}M_2=\{(m_1,m_2)\in M_1\times M_2\ ; \ \pi_1(m_1)=\pi_2(m_2)\} \ni (m_1,m_2)\longmapsto \pi_1(m_1).
\]
Writing $\pr{i}(m_1,m_2)=m_i$, we get a commutative square of submersions
\begin{equation}\label{M_fibered_product}
 \xymatrix{
   M_1\underset{\pi}{\times}M_2 \ar^{\pr{2}}[r]\ar^{\pr{1}}[d] & M_2 \ar^{\pi_2}[d] \\
   M_1 \ar^{\pi_1}[r] & B
}
\end{equation}
\begin{prop}\label{prop:comm-square-transv-submersions}
 The pull-back $\mathrm{pr}_1^* : \cD'(M_1) \longrightarrow \cD'(M_1\underset{\pi}{\times}M_2)$ restricts to a continuous map
\begin{equation}\label{eq:pull-back-using-cartesian-square-of-submersions}
 \mathrm{pr}_1^* : \cD'_{\pi_1}(M_1) \longrightarrow \cD'_{\pr{2}}(M_1\underset{\pi}{\times}M_2).
\end{equation}
\end{prop}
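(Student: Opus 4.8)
The plan is to show that $\pr{1}^{*}u$ is a $C^\infty$ family of distributions in the fibers of $\pr{2}$ whenever $u$ is a $C^\infty$ family in the fibers of $\pi_1$, and to read off continuity from this description. First I note that, $\pi_1$ and $\pi_2$ being submersions, $M_1\underset{\pi}{\times}M_2$ is a manifold and both projections $\pr{1},\pr{2}$ are submersions (base change); in particular the target space $\cD'_{\pr{2}}(M_1\underset{\pi}{\times}M_2)$ is meaningful, and the pull-back $\pr{1}^{*}:\cD'(M_1)\to\cD'(M_1\underset{\pi}{\times}M_2)$ is the well-defined map $u\mapsto u\circ(\pr{1})_{*}$ already used in Proposition \ref{prop:obvious-pull-back}. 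Since transversality and continuity are local over $B$, I would fix local trivializations identifying $M_1\simeq X\times Y_1$, $M_2\simeq X\times Y_2$ with $\pi_1=\pi_2=\pi_X$, so that $M_1\underset{\pi}{\times}M_2\simeq X\times Y_1\times Y_2$, with $\pr{1}(x,y_1,y_2)=(x,y_1)$ and $\pr{2}(x,y_1,y_2)=(x,y_2)$, and the fiber of $\pr{2}$ over $(x,y_2)$ is a copy of $Y_1$.

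Writing $u=(u_x)_{x\in X}\in C^\infty(X,\cD'(Y_1))$ via Proposition \ref{prop:family-to-global-distri-submersion-case}, the core step is the direct computation that for a test density $g$ on $M_2$ and $f\in C^\infty_{\fc-\pr{2}}(M_1\underset{\pi}{\times}M_2,\Omega)$,
\[
(\pr{2})_{*}(\pr{1}^{*}u\cdot f)(x,y_2)=\scal{u_x}{f(x,\cdot,y_2)}.
\]
Equivalently, the $\pr{2}$-fiber over $m_2$ being identified with the $\pi_1$-fiber over $\pi_2(m_2)$, the restriction of $\pr{1}^{*}u$ to that fiber is $u_{\pi_2(m_2)}$; thus the family attached to $\pr{1}^{*}u$ is simply $(u_{\pi_2(m_2)})_{m_2}$, the family $u$ pre-composed with the smooth map $\pi_2$. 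Smoothness in the base variables is then immediate: $(x,y_2)\mapsto\scal{u_x}{f(x,\cdot,y_2)}$ is $C^\infty$ because $x\mapsto u_x$ is a $C^\infty$ family and $f$ is smooth with the required support, which is exactly the statement that $\pr{1}^{*}u\in\cD'_{\pr{2}}(M_1\underset{\pi}{\times}M_2)$ through Definition \ref{defn:family-distri-submersion} and Proposition \ref{prop:family-to-global-distri-submersion-case}.

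For continuity I would use the semi-norms \eqref{eq:semin-norms-family-distrib}: the formula above exhibits the family of $\pr{1}^{*}u$ as obtained from that of $u$ by pull-back along $\pi_2$ and extension constant in $y_2$, an operation that is manifestly bounded for each $p_{k,\cB,K}$; alternatively one may invoke the closed graph theorem for LF-spaces as after Definition \ref{defn:transversal-distribution}. The step I expect to require the most care is not the smoothness — which is automatic — but the support bookkeeping needed to glue the local pictures into the global statement and to justify the pairing: one must check that for $f\in C^\infty_{\fc-\pr{2}}(M_1\underset{\pi}{\times}M_2,\Omega)$ and $g\in C^\infty_c(M_2,\Omega_{M_2})$ the density $(\pr{1})_{*}\big(f\cdot(g\circ\pr{2})\big)$ lies in $C^\infty_{\fc-\pi_1}(M_1,\Omega_{M_1})$, so that it may legitimately be paired with the (possibly non-compactly supported) transversal distribution $u$; this is where the properness condition built into $C^\infty_{\fc-\pr{2}}$ and the compactness of $\supp{g}$ must be combined with the fibered-product geometry.
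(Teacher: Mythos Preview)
Your proposal is correct and follows essentially the same approach as the paper: localize to trivializations $\pi_j:X\times Y_j\to X$, identify transversal distributions with $C^\infty$ families via Proposition \ref{prop:family-to-global-distri-submersion-case}, and observe that $\pr{1}^*u$ corresponds to the family $X\times Y_2\ni(x,y_2)\mapsto u_x\in\cD'(Y_1)$. The paper's proof is in fact terser than yours---it records only this last observation and declares the statement to follow---so your additional remarks on continuity via the semi-norms \eqref{eq:semin-norms-family-distrib} and on the support bookkeeping are welcome elaborations rather than deviations.
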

\begin{proof}
We identify transversal distributions with $C^\infty$ families and we can work locally, that is we assume that $\pi_j : X\times Y_j\to X$, with $X,Y_1,Y_2$ open subsets in euclidean spaces.  If $u\in C^\infty(X,\cD'(Y_1))$ then $\pr{1}^*(u)$ is given by the family
\[
  X\times Y_2\ni  (x,y_2) \longmapsto u_{x}\in \cD'(Y_1).
\]
The statement follows.   
\end{proof}
\begin{rmk}\rm{
\begin{enumerate}
\item The assertion of  the previous proposition holds for commutative square of surjective submersions
\begin{equation}
 \xymatrix{
   M \ar^{\pr{2}}[r]\ar^{\pr{1}}[d] & M_2 \ar^{\pi_2}[d] \\
   M_1 \ar^{\pi_1}[r] & B
}
\end{equation}
such that any point of $M$, 
\(
 \ker d\pi_1\circ \pr{1}= \ker d\pr{1}+\ker d\pr{2}
\)
or, equivalently, such that $\pr{1} : \pr{2}^{-1}(m_2)\to \pi_1^{-1}(b)$, $b=\pi_2(m_2)$, is a submersion for any $m_2\in M_2$. 
\item One can also prove in the same way that the restriction $\mathrm{pr}_1^* : \cP'_{\pi_1}(M_1) \longrightarrow \cP'_{\pr{2}}(M)$ is continuous, observing that $\mathrm{pr}_1^*$ restricts to 
$
  \cD'_{\fc-\pi_1}(X\times Y_1)\longrightarrow    \cD'_{\fc-\pr{2}}(X\times Y_1\times Y_2).
  $ \\
Then one can, as above, extend this result to more general commutative squares, provided the additional condition that the map $M \to M_1\underset{\pi}{\times} M_2, m\mapsto (\pr{1}(m),\pr{2}(m))$ has compact fibers. 
\end{enumerate}
}\end{rmk}

When a finite set $\cI$ of submersions is given on $M$,
we introduce 
\begin{equation}
 \cD'_{\cI}(M) = \bigcap_{\rho\in\cI} \cD'_{\rho}(M) \subset \cD'(M).
\end{equation}
The space $\cD'_{\cI}(M,E)$ is given the topology generated by the union of the topologies induced by each $\cD'_{\rho}(M) $, $\rho\in\cI$.
We adopt similar convention for the spaces $\cE'_{\cI}(M)$ and $\cD'_{c-\cI}(M)$.
The previous proposition  is now used  to define fibered product of distributions. 
\begin{prop}\label{lem:transv-heredity-fibered-product-case}
We keep the setting of Proposition \ref{prop:comm-square-transv-submersions} and we consider extra submersions $\rho : M_1\longrightarrow A$, $\sigma : M_2\longrightarrow C$.
\begin{center}
\begin{tikzcd}
   M_1\underset{\pi}{\times}M_2 \arrow{r}{\pr{2}}\arrow{d}{\pr{1}}\arrow{dr}{\pi} &   M_2 \arrow{d}{\pi_2}  \arrow{r}{\sigma} &  C \\
   M_1 \arrow{r}{\pi_1} \arrow{d}{\rho} & B  & \\
    A  & &  
\end{tikzcd}
\end{center}
The fibered product of $C^\infty$ functions $(f_1,f_2) \longmapsto f_1\otimes f_2\vert_{M_1\underset{\pi}{\times}M_2}$ 
extends uniquely to separately continuous bilinear maps
\begin{equation}\label{eq:fibered-product-dist-1}
\begin{matrix}
  \cD'_{\pi_1}(M_1)\times \cD'_{\sigma}(M_2) & \longrightarrow & \cD'_{\sigma\circ\pr{2}}(M) \\
   (u_1,u_2)& \longmapsto & u_1\underset{\pi_1}{\times}u_2
\end{matrix}
\hskip 3mm\hbox{ and }\hskip 3mm
\begin{matrix}
  \cD'_{\rho}(M_1)\times \cD'_{\pi_2}(M_2) & \longrightarrow & \cD'_{\rho\circ\pr{1}}(M) \\
   (u_1,u_2)& \longmapsto & u_1\underset{\pi_2}{\times}u_2
\end{matrix}
\end{equation}
 If $u_j\in\cD'_{\pi_j}(M_j)$, $j=1,2$ then  the equality 
\begin{equation}\label{eq:fubini-style-fibered-prod}
 u_1\underset{\pi_1}{\times}u_2 = u_1\underset{\pi_2}{\times}u_2 
\end{equation}
holds  and both previous maps restrict to a separately continuous bilinear map 
\begin{equation}\label{eq:fancy-fibered-product-map}
\begin{matrix}
 \cD'_{\rho,\pi_1}(M_1)\times \cD'_{\pi_2,\sigma}(M_2) &\longrightarrow & \cD'_{\rho\circ\pr{1},\pi,\sigma\circ\pr{2}}(M) \\
 (u_1,u_2) &\longmapsto &  u_1\underset{\pi_1}{\times}u_2 . 
\end{matrix}
\end{equation}
\end{prop}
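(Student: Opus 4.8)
The plan is to realise the fibered product as a composition of operations already available: the pull-back $\pr{1}^*$, a push-forward along $\pr{2}$, and a pairing with $u_2$. Fix $u_1\in\cD'_{\pi_1}(M_1)$ and $u_2\in\cD'_{\sigma}(M_2)$. By Proposition \ref{prop:comm-square-transv-submersions}, $\pr{1}^*(u_1)\in\cD'_{\pr{2}}(M)$; so, through the identification \eqref{eq:transversal-distribution-and-B-linear-maps}, it is the same datum as a continuous $C^\infty(M_2)$-linear map
\[
 T:C^\infty_{\fc-\pr{2}}(M,\Omega_M)\longrightarrow C^\infty(M_2,\Omega_{M_2}),\qquad T(f)=(\pr{2})_*(\pr{1}^*(u_1)\cdot f).
\]
For $f\in C^\infty_c(M,\Omega_M)$ one has $\pr{2}(\supp f)$ compact, hence $T(f)\in C^\infty_c(M_2,\Omega_{M_2})$, and I define
\[
 \langle u_1\underset{\pi_1}{\times}u_2,\ f\rangle=\langle u_2,\ T(f)\rangle,\qquad f\in C^\infty_c(M,\Omega_M).
\]
Continuity in $f$ is inherited from $T$ and $u_2$, so this is a distribution on $M$; evaluating on smooth $u_1=f_1$, $u_2=f_2$ gives $\langle u_1\underset{\pi_1}{\times}u_2,f\rangle=\int_M(f_1\circ\pr{1})(f_2\circ\pr{2})\,f$, so the construction extends $f_1\otimes f_2|_{M_1\underset{\pi}{\times}M_2}$. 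The second map $(u_1,u_2)\mapsto u_1\underset{\pi_2}{\times}u_2$ is built identically after exchanging the roles of $M_1$ and $M_2$.

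The heart of the statement is the transversality $u_1\underset{\pi_1}{\times}u_2\in\cD'_{\sigma\circ\pr{2}}(M)$, and the engine is a projection formula. For $h\in C^\infty_{\fc-\sigma\circ\pr{2}}(M,\Omega_M)$ and any $g\in C^\infty_c(M_2)$, the $C^\infty(M_2)$-linearity of the push-forward gives $(\pr{2})_*(\pr{1}^*(u_1)\cdot h\cdot(g\circ\pr{2}))=g\cdot T(h)$, and unwinding the definitions this says
\[
 (\pr{2})_*\big((u_1\underset{\pi_1}{\times}u_2)\cdot h\big)=T(h)\cdot u_2 .
\]
Here $T(h)\in C^\infty(M_2,\Omega_{M_2})$ because $\pr{1}^*(u_1)$ is $\pr{2}$-transversal, and a support check shows $\sigma:\supp{T(h)}\to C$ is proper: $T(h)$ vanishes off $\overline{\pr{2}(\supp h)}$, and over a compact $K\subset C$ the set $\sigma^{-1}(K)\cap\pr{2}(\supp h)$ is the image under $\pr{2}$ of the compact set $\supp h\cap(\sigma\circ\pr{2})^{-1}(K)$. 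Thus $T(h)\in C^\infty_{\fc-\sigma}(M_2,\Omega_{M_2})$, and applying $\sigma_*$ together with the $\sigma$-transversality of $u_2$ yields $(\sigma\circ\pr{2})_*((u_1\underset{\pi_1}{\times}u_2)\cdot h)=\sigma_*(T(h)\cdot u_2)\in C^\infty(C)$, which is exactly the desired transversality. Separate continuity in each argument, valued in $\cD'_{\sigma\circ\pr{2}}(M)$, then follows from the continuity of $\pr{1}^*$ (Proposition \ref{prop:comm-square-transv-submersions}) and of the push-forward and pairing used above.

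The remaining assertions are soft consequences of density plus separate continuity. Smooth compactly supported functions are dense in the transversal distribution spaces (mollify along the fibers and truncate in the base), and a separately continuous bilinear map is determined by its restriction to a dense product; this gives uniqueness of the extension, and, since both $u_1\underset{\pi_1}{\times}u_2$ and $u_1\underset{\pi_2}{\times}u_2$ restrict to $f_1\otimes f_2|_{M_1\underset{\pi}{\times}M_2}$ on smooth functions, it gives the Fubini identity \eqref{eq:fubini-style-fibered-prod} on all of $\cD'_{\pi_1}(M_1)\times\cD'_{\pi_2}(M_2)$. Finally, the refined map \eqref{eq:fancy-fibered-product-map} is obtained by combining the two realisations via this equality: the first construction places $u_1\underset{\pi_1}{\times}u_2$ in $\cD'_{\sigma\circ\pr{2}}(M)$ and (taking $\sigma=\pi_2$) in $\cD'_{\pi}(M)$, while the second places $u_1\underset{\pi_2}{\times}u_2=u_1\underset{\pi_1}{\times}u_2$ in $\cD'_{\rho\circ\pr{1}}(M)$; as the topology on the triple intersection is the supremum of the three, separate continuity into each factor yields separate continuity into $\cD'_{\rho\circ\pr{1},\pi,\sigma\circ\pr{2}}(M)$. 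I expect the main obstacle to be the transversality step of the second paragraph — getting the projection formula and its properness bookkeeping exactly right — since everything else reduces either to Proposition \ref{prop:comm-square-transv-submersions} or to the standard density-and-separate-continuity argument.
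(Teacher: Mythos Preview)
Your argument is correct and follows essentially the same route as the paper: both define $u_1\underset{\pi_1}{\times}u_2$ as the composition $u_2\circ\pr{1}^*(u_1)$, obtain the $\sigma\circ\pr{2}$-transversality from the support inclusion $\supp T(h)\subset\pr{2}(\supp h)$ (your properness check is exactly what the paper spells out, invoking the closed graph theorem to upgrade to continuity into $C^\infty_{\fc-\sigma}(M_2,\Omega_{M_2})$), and deduce separate continuity from that of $\pr{1}^*$ and of composition of linear maps. The one genuine difference is the Fubini identity \eqref{eq:fubini-style-fibered-prod}: the paper verifies it by a direct appeal to Fubini's theorem for distributions applied to \eqref{eq:fibered-prod-dist-via-pairing}, whereas you argue by density of $C^\infty_c$ in $\cD'_{\pi_j}$ together with separate continuity; both work, but the paper's route avoids having to justify that density and the compatibility of topologies.
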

\begin{rmk}\rm{
The above map in (\ref{eq:fibered-product-dist-1}) restricts to a separately continuous map
\begin{equation}\label{eq:fibered-product-dist-properly-supported}
\cP'_{\pi_1}(M_1)\times \cD'_{\sigma}(M_2)  \longrightarrow  \cD'_{\sigma\circ\pr{2}}(M)\cap  \cD'_{\fc-\pr{2}}(M).
\end{equation}
There are analogous statements for the fibered product over $\pi_2$ and the conditions on supports can be interchanged. 
}\end{rmk}
\begin{proof}
Let $u_1\in\cD'_{\pi_1}(M_1)$ and $u_2\in\cD'(M_2)$. As a  distribution  on $M$, $u_1\underset{\pi_1}{\times}u_2$ is defined by 
\begin{equation}\label{eq:fibered-prod-dist-via-pairing}
 \langle u_1\underset{\pi_1}{\times}u_2, f \rangle = \int_{M_2} \left(\int_{M_{1\pi_2(m_2)}} u_{1\pi_2(m_2)}(m_1)f(m_1,m_2)\right)u_2(m_2)
\end{equation}
where the integrals are taken in the distribution sense. This  coincides with the fibered product of functions when $u_1,u_2$ are $C^\infty$. Proposition \ref{prop:comm-square-transv-submersions} says that 
\begin{equation}\label{eq:proof-transv-heredity-fibered-product-case-2}
 T : u \in  \cD'_{\pi_1}(M_1)\longmapsto \mathrm{pr}_1^*(u)\in  \cL(C^\infty_{\fc-\pr{2}}(M,\Omega_M), C^\infty(M_2,\Omega_{M_2}))
\end{equation}
is well defined and continuous. Observing that the inclusion $C^\infty_{\fc-\sigma\circ\pr{2}}(M,\Omega_M)\subset C^\infty_{\fc-\pr{2}}(M,\Omega_M)$ is continuous, we get the continuity of the mapping:
\begin{equation}\label{eq:proof-transv-heredity-fibered-product-case-2b}
 T : u \in  \cD'_{\pi_1}(M_1)\longmapsto \mathrm{pr}_1^*(u)\in  \cL(C^\infty_{\fc-\sigma\circ\pr{2}}(M,\Omega_M), C^\infty(M_2,\Omega_{M_2})).
\end{equation}
From $\mathrm{pr}_1^*(u)(C^\infty_{\fc-\sigma\circ\pr{2}}(M,\Omega_M))\subset C^\infty_{\fc-\sigma}(M_2,\Omega_{M_2}))$, we get from the closed graph Theorem again that the restricted map 
\[
 \mathrm{pr}_1^*(u) : C^\infty_{\fc-\sigma\circ\pr{2}}(M,\Omega_M)\longrightarrow C^\infty_{\fc-\sigma}(M_2,\Omega_{M_2})
\]
is continuous, and we outline the proof of continuity of the mapping:
\[
 T : u\in  \cD'_{\pi_1}(M_1)\longmapsto \mathrm{pr}_1^*(u)\in  \cL(C^\infty_{\fc-\sigma\circ\pr{2}}(M,\Omega_M), C^\infty_{\fc-\sigma}(M_2,\Omega_{M_2})).
\]
By definition of the topology of uniform convergence on bounded subsets, we have to estimate 
\begin{equation}\label{eq:refined-continuity}
 p_\cB(T(u)) = \sup_{f\in\cB} p(T(u)(f))
\end{equation}
 for any semi-norm  $p$ defining the topology of $C^\infty_{\fc-\sigma}(M_2,\Omega_{M_2})$ and any bounded subset $\cB\subset C^\infty_{\fc-\sigma\circ\pr{2}}(M,\Omega_M)$. But for any such bounded subset $\cB$, there exists $\Omega\subset M$ such that $\sigma\circ\pr{2}: \Omega\to C$ is proper and $f\in \cB\Rightarrow \supp{f}\subset \Omega$. 
We then have $\supp{T(u)(f)} \subset \pr{2}(\Omega)$ for any $\cB$. Since $\sigma : \pr{2}(\Omega)\to C$ is proper, $p$ is also a continuous semi-norm on $C^\infty_0(\pr{2}(\Omega),\Omega_{M_2})$ hence we can replace in \eqref{eq:refined-continuity} $p$ by a semi-norm of $C^\infty_0(\pr{2}(\Omega),\Omega_{M_2})$ or equally by a semi-norm of $C^\infty(M_2,\Omega_{M_2})$ and the result now follows from the continuity of \eqref{eq:proof-transv-heredity-fibered-product-case-2b}.
Then
\begin{equation}\label{eq:fibered-prod-dist-via-linear-map}
  u_1\underset{\pi_1}{\times}u_2= u_2 \circ \mathrm{pr}_1^*(u_1)\in \cL_{C^\infty(C)}(C^\infty_{\fc-\sigma\circ\pr{2}}(M,\Omega_M),C^\infty(C,\Omega_C))
\end{equation}
is continuous in $u_1$ and $u_2$ since the composition of continuous linear maps is separately continuous. 
 
When $u_j\in\cD'_{\pi_j}(M_j)$, $j=1,2$, both fibered products  $u_1\underset{\pi_j}{\times}u_2$, $j=1,2$ makes sense. Starting with \eqref{eq:fibered-prod-dist-via-pairing} and applying Fubini Theorem for distributions, we get their equality and this also allows to take into account the  extra transversality assumptions \eqref{eq:fancy-fibered-product-map} in order to conclude, by the previous method, that  $u_1\underset{\pi_2}{\times}u_2$ is transversal with respect to $\rho\circ\pr{1},\pi$ and $\sigma\circ\pr{2}$ and depends continuously on $u_1$ and $u_2$.
\end{proof}
Consider a commutative diagram 
\begin{equation}
 \xymatrix{   M\ar[rr]^{f} \ar[dr]_{\pi} &  &   N\ar[dl]^{\rho} 
      \\
    &B&}
\end{equation}
where $f$ is a $C^\infty$ map  and $\pi$, $\rho$ are submersions. If $u\in\cE'(M,\Omega_M)$, the push-forward of $u$ by $f$ is given by 
\(
  \langle f_*u, g\rangle = \langle u, g\circ f\rangle
\)
and if moreover $u$ is transversal with respect to $\pi$, then $f_*u$ is given  by the $C^\infty$ family $((f\vert_{M_b})_*u_b),\ b\in B$. 
We obtain a map
\begin{equation}
 f_*:\cE'_\pi(M,\Omega_M)\longrightarrow \cE'_\rho(N,\Omega_N).
\end{equation}
Since $f$ is not necessarily proper, we can not extend $f_*$ to $\cD'_\pi$, nevertheless:
\begin{prop}\label{prop:push-forward-fibered-product}
Let $\varphi\in C^\infty(M)$ such that $f : \supp \varphi\longrightarrow N$ is proper. Then the map
\begin{align}
  \cD'_\pi(M,\Omega_M) & \longrightarrow \cD'_\rho(N,\Omega_N) \\
        u & \longmapsto  f_*(\varphi u)
\end{align}
 is well defined and continuous. 
\end{prop}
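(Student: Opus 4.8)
The plan is to reduce to the fiberwise picture furnished by Proposition~\ref{prop:family-to-global-distri-submersion-case}, under which $\cD'_\pi(M,\Omega_M)$ and $\cD'_\rho(N,\Omega_N)$ are identified with the spaces of $C^\infty$ families of distributions in the fibers of $\pi$ and of $\rho$. Since $\rho\circ f=\pi$, the map $f$ carries the fiber $M_b=\pi^{-1}(b)$ into $N_b=\rho^{-1}(b)$; write $f_b:M_b\to N_b$ for the restriction and $\varphi_b=\varphi\vert_{M_b}$. First I would check that the properness of $f:\supp{\varphi}\to N$ descends to each fiber: for $C\subset N_b$ compact, $(f_b\vert_{\supp{\varphi_b}})^{-1}(C)$ is the intersection of the compact set $(f\vert_{\supp{\varphi}})^{-1}(C)$ with the closed set $M_b$, hence compact, so $f_b:\supp{\varphi_b}\to N_b$ is proper and the fiberwise push-forward $(f_b)_*(\varphi_b u_b)\in\cD'(N_b)$ is defined exactly as the push-forward on $\cE'_\pi$ recalled before the statement. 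This yields a candidate family $\big((f_b)_*(\varphi_b u_b)\big)_{b\in B}$ for $f_*(\varphi u)$.

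Next I would verify that this family does represent the global distribution defined by $\langle f_*(\varphi u),g\rangle=\langle u,\varphi\cdot(g\circ f)\rangle$. Working in a local trivialization $\pi:X\times Y\to X$, $\rho:X\times Z\to X$, the relation $\rho\circ f=\pi$ forces $f(x,y)=(x,f_x(y))$; thus for $g\in C^\infty_c(X\times Z)$ the function $F(x,y)=\varphi(x,y)\,g(x,f_x(y))$ is smooth, with support contained in $\supp{\varphi}\cap(f\vert_{\supp{\varphi}})^{-1}(\supp{g})$, which is compact. In particular $\varphi\cdot(g\circ f)\in C^\infty_c(M)$, so $f_*(\varphi u)$ is a genuine distribution on $N$; and unwinding the isomorphism $J$ gives $\langle f_*(\varphi u),g\rangle=\int_X\langle (f_x)_*(\varphi_x u_x),g(x,\cdot)\rangle\,dx$, identifying $f_*(\varphi u)$ with the candidate family.

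It then remains to show that $x\mapsto(f_x)_*(\varphi_x u_x)$ is a $C^\infty$ family in the sense of Definition~\ref{defn:family-distri-submersion}, i.e. that $x\mapsto\langle u_x,F(x,\cdot)\rangle$ is smooth for every $g$, and that the assignment is continuous. The key point is that the properness of $f\vert_{\supp{\varphi}}$ forces the fiberwise supports of $F(x,\cdot)$ to remain inside one fixed compact subset of $Y$ as $x$ ranges over a compact $K\subset X$; hence $x\mapsto F(x,\cdot)$ is a $C^\infty$ map into $C^\infty_c(Y)$ with locally uniform compact support, and pairing it against the $C^\infty$ family $u$ produces a smooth function, the derivatives being computed by the Leibniz rule $\partial_x\langle u_x,F(x,\cdot)\rangle=\langle\partial_x u_x,F(x,\cdot)\rangle+\langle u_x,\partial_x F(x,\cdot)\rangle$. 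For continuity I would estimate a seminorm \eqref{eq:semin-norms-family-distrib} of $f_*(\varphi u)$ over $x\in K$ and $g$ in a bounded set: expanding by Leibniz produces finitely many terms $\langle\partial_x^{\alpha'}u_x,\partial_x^{\alpha''}F(x,\cdot)\rangle$, each dominated by a seminorm \eqref{eq:semin-norms-family-distrib} of $u$ evaluated on the bounded subset $\{\partial_x^{\alpha''}F\}$ of $C^\infty_{\fc-\pi}(M,\Omega_M)$. Gluing the local trivializations with a partition of unity then gives the global statement.

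I expect the main obstacle to be precisely the smoothness of the output family in the base variable, i.e. the interchange of the $\pi$-fiber duality pairing with differentiation along $B$, and this is exactly where the cutoff $\varphi$ is indispensable: without the hypothesis that $f\vert_{\supp{\varphi}}$ be proper there is no control on the fiberwise supports of $F(x,\cdot)$, the map $x\mapsto F(x,\cdot)$ is no longer valued in $C^\infty_c(Y)$, and the pairing need not even be defined. The density bookkeeping (via $\Omega_M=\pi^*\Omega_B\otimes\Omega_{M/B}$ and its analogue for $N$) is routine and can be suppressed by trivializing the density bundles together with $\pi$ and $\rho$.
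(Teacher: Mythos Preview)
Your argument is correct, but it takes a different and considerably longer route than the paper's own proof. You work through the identification of Proposition~\ref{prop:family-to-global-distri-submersion-case} with $C^\infty$ families in the fibers, push forward fiber by fiber, and then verify smoothness and continuity in the base variable by hand in local trivializations via the Leibniz rule. The paper instead uses the other description of $\cD'_\pi$, namely \eqref{eq:transversal-distribution-and-B-linear-maps}: since $\rho_*\big(f_*(\varphi u)\cdot g\big)=\pi_*\big(u\cdot(\varphi\, g\!\circ\! f)\big)$, the entire statement reduces to the single observation that $g\longmapsto \varphi\,(g\circ f)$ maps $C^\infty_{\fc-\rho}(N)$ continuously into $C^\infty_{\fc-\pi}(M)$, which is exactly the support computation you carry out in your second paragraph. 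What your approach buys is an explicit fiberwise description of $f_*(\varphi u)$ and a direct check of smoothness in $b$; what the paper's approach buys is brevity, since transversality and continuity come for free by dualizing a single continuous linear map. Both are valid, and in fact your proof implicitly contains the paper's key step.
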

\begin{proof}
Under the assumption on the support of $\varphi$, we easily get  that  $g\longmapsto \varphi. g\circ f$ maps continuously $C^\infty_{\fc-\rho}(N)$ into  $C^\infty_{\fc-\pi}(M)$. The result follows. 
\end{proof}

\section{Convolution of transversal distributions on groupoids}\label{sec:3}
We apply these observations in the context of Lie groupoids. \\
A Lie groupoid is a manifold $G$ endowed with the additional following structures:
\begin{itemize}
 \item two surjective submersions $r,s: G\rightrightarrows G^{(0)}$ onto a manifold $G^{(0)}$ called the space of units. 
 \item An embedding $u : G^{(0)}\longrightarrow G$, which allows to consider  $G^{(0)}$ as a submanifold of $G$ and then such that
 \begin{equation}
   r(x) = x \quad ,\quad s(x)=x,\quad \text{ for all } x\in G^{(0)}.
 \end{equation}
 \item A $C^\infty$ map  
 \begin{equation}
   i : G\longrightarrow G, \ \ \gamma\longmapsto \gamma^{-1}
 \end{equation}
 called inversion and satisfying $s(\gamma^{-1})=r(\gamma)$ and $r(\gamma^{-1})=s(\gamma)$ for any $\gamma$.
 \item a $C^\infty$ map 
  \begin{equation}
  m : G^{(2)}=\{(\gamma_1,\gamma_2)\in G^2\ ;\ s(\gamma_1)=r(\gamma_2)\}\longrightarrow G, \ \ (\gamma_1,\gamma_2)\longmapsto \gamma_1\gamma_2
 \end{equation}	
 called the multiplication, satisfying the relations, whenever they make sense 
 \begin{align}
 & (\gamma_1\gamma_2)\gamma_3= \gamma_1(\gamma_2\gamma_3) & & r(\gamma)\gamma=\gamma & & \gamma s(\gamma)=\gamma \\
 &   \gamma\gamma^{-1} = r(\gamma) & & \gamma^{-1} \gamma= s(\gamma)\  & & \ r(\gamma_1\gamma_2)=r(\gamma_1),\ s(\gamma_1\gamma_2)=s(\gamma_2) .
 \end{align}
\end{itemize}
It follows from these axioms that $i$ is a diffeomorphism equal to its inverse, $m$ is a surjective submersion and  $\gamma^{-1}$ is the unique inverse of $\gamma$, for any $\gamma$, that is the only element of $G$ satisfying $ \gamma\gamma^{-1} = r(\gamma), \ \gamma^{-1} \gamma= s(\gamma)$. These assertions need a proof, and the unfamiliar reader is invited to consult for instance \cite{Mackenzie2005} and references therein. 

It is customary to write
\[
 G_x=s^{-1}(x),\quad G^x=r^{-1}(x),\ G_x^y= G_x\cap G^y, \ m_x=m\vert_{G^x\times G_x} : G^x\times G_x\longrightarrow G.
\]
$G_x$, $G^x$ are submanifolds and $G_x^x$ is a Lie group. The submersion $d :(\gamma_1,\gamma_2)\mapsto \gamma_1\gamma_2^{-1}$ defined on $G\underset{s}\times G$ is called division of $G$. 

Obviously, Lie groups, $C^\infty$ vector bundles, principal bundles, are Lie groupoids. Also, for any manifold $X$, the manifold $X\times X$ inherits a canonical structure of Lie groupoid with unit space $X$ and multiplication given by $(x,y).(y,z)=(x,z)$. The reader can find in \cite{Winkelnkemper1983,Pradines1986,Connes1994,NWX,MP,Debord2001,Monthubert1999,Nistor2000,DLR} more concrete examples.

The Lie algebroid $A(G)$ of a Lie groupoid $G$ is the fiber bundle $TG\vert_{G^{(0)}}/TG^{(0)}$ over $G^{(0)}$. It can be identified with $\hbox{Ker }ds\vert_{G^{(0)}}$ or $\hbox{Ker }dr\vert_{G^{(0)}}$. Its dual $A^*(G)$ is the conormal bundle of $G^{(0)}$.

We  recall the construction of the canonical  convolution algebra $C^\infty_c(G,\Omega^{1/2})$ \cite{Connes1994,DebSkand2014} associated with any Lie groupoid $G$. The product of convolution  
\begin{equation}
 C^\infty_c(G,\Omega^{1/2})\times C^\infty_c(G,\Omega^{1/2})\overset{*}{\longrightarrow} C^\infty_c(G,\Omega^{1/2})
\end{equation}
is given by the integral
\begin{equation}\label{eq:basic_formula_convolution_functions}
 f*g(\gamma)=\int_{\gamma_1\gamma_2=\gamma}f(\gamma_1)g(\gamma_2), \quad \gamma\in G
\end{equation}
which is  well defined and gives an internal operation as soon as we take
\begin{equation}
 \Omega^{1/2}=\Omega^{1/2}(\ker dr)\otimes\Omega^{1/2}(\ker ds)=\Omega^{1/2}(\ker dr\oplus \ker ds).
\end{equation}
To understand this point,  we recall
\begin{lem}\label{lem:manip-demi-densite-G}\cite{Connes1994,DebSkand2014}. 
Denoting by $m$ the multiplication map of $G$ and by  $\mathrm{pr}_1,\mathrm{pr}_2 : G\times G\to G$ the natural projection maps, we have a canonical isomorphism 
\begin{equation}\label{eq:why-densities-match-on-gpd}
 \mathrm{pr}_1^*(\Omega^{1/2})\otimes \mathrm{pr}_2^*(\Omega^{1/2})|_{G^{(2)}}\simeq \Omega(\ker dm)\otimes m^*(\Omega^{1/2}).
\end{equation}
\end{lem}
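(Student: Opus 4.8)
The plan is to build the isomorphism fibrewise over a point $(\gamma_1,\gamma_2)\in G^{(2)}$, writing $\gamma=m(\gamma_1,\gamma_2)=\gamma_1\gamma_2$, and then to observe that every identification used is manufactured from the smooth structure maps of $G$, so that the fibrewise isomorphism glues into a bundle isomorphism. Throughout I would use only two elementary functorial facts about densities: a linear isomorphism $\phi\colon V\to W$ induces $\Omega^{\alpha}(V)\simeq\Omega^{\alpha}(W)$, and there is a canonical splitting $\Omega^{\alpha}(V\oplus W)\simeq\Omega^{\alpha}(V)\otimes\Omega^{\alpha}(W)$. In particular the left-hand side of \eqref{eq:why-densities-match-on-gpd} is the tensor product of the four half-density lines $\Omega^{1/2}(\ker dr_{\gamma_1})$, $\Omega^{1/2}(\ker ds_{\gamma_1})$, $\Omega^{1/2}(\ker dr_{\gamma_2})$, $\Omega^{1/2}(\ker ds_{\gamma_2})$, and $\Omega(\ker dm)\simeq\Omega^{1/2}(\ker dm)\otimes\Omega^{1/2}(\ker dm)$.

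First I would record the two translation isomorphisms. Right translation $R_{\gamma_2}\colon G_{r(\gamma_2)}\to G_{s(\gamma_2)}$ sends $\gamma_1$ to $\gamma$ and carries the $s$-fibre through $\gamma_1$ to the $s$-fibre through $\gamma$, so its differential gives $\ker ds_{\gamma_1}\overset{\sim}{\longrightarrow}\ker ds_\gamma$; dually, left translation $L_{\gamma_1}\colon G^{s(\gamma_1)}\to G^{r(\gamma_1)}$ gives $\ker dr_{\gamma_2}\overset{\sim}{\longrightarrow}\ker dr_\gamma$. At the level of half-densities these produce $\Omega^{1/2}(\ker ds_{\gamma_1})\simeq\Omega^{1/2}(\ker ds_\gamma)$ and $\Omega^{1/2}(\ker dr_{\gamma_2})\simeq\Omega^{1/2}(\ker dr_\gamma)$, which account for two of the four left-hand factors and exactly match two of the factors of $m^*(\Omega^{1/2})=\Omega^{1/2}(\ker dr_\gamma\oplus\ker ds_\gamma)$ on the right.

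The crucial step is a double description of $\ker dm$. The fibre $m^{-1}(\gamma)$ is $\{(\gamma_1',\gamma_1'^{-1}\gamma)\ :\ r(\gamma_1')=r(\gamma)\}$, so \emph{both} projections restrict to diffeomorphisms $\pr{1}\colon m^{-1}(\gamma)\overset{\sim}{\longrightarrow}G^{r(\gamma)}$ and $\pr{2}\colon m^{-1}(\gamma)\overset{\sim}{\longrightarrow}G_{s(\gamma)}$ (the latter via $\gamma_1'\mapsto\gamma_1'^{-1}\gamma$, with inverse $\gamma_2'\mapsto\gamma\gamma_2'^{-1}$). Differentiating at $(\gamma_1,\gamma_2)$ gives two distinct identifications $d\pr{1}\colon \ker dm\overset{\sim}{\longrightarrow}\ker dr_{\gamma_1}$ and $d\pr{2}\colon \ker dm\overset{\sim}{\longrightarrow}\ker ds_{\gamma_2}$. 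Applying $d\pr{1}$ to one half-density factor and $d\pr{2}$ to the other yields the asymmetric splitting
\[
\Omega(\ker dm)\simeq \Omega^{1/2}(\ker dr_{\gamma_1})\otimes\Omega^{1/2}(\ker ds_{\gamma_2}),
\]
and these are precisely the two left-hand factors \emph{not} absorbed by the translations.

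Assembling, I replace $\Omega^{1/2}(\ker ds_{\gamma_1})$ and $\Omega^{1/2}(\ker dr_{\gamma_2})$ by $\Omega^{1/2}(\ker ds_\gamma)$ and $\Omega^{1/2}(\ker dr_\gamma)$ via the translation isomorphisms, and recognise the remaining pair $\Omega^{1/2}(\ker dr_{\gamma_1})\otimes\Omega^{1/2}(\ker ds_{\gamma_2})$ as $\Omega(\ker dm)$ by the displayed splitting; the result is $\Omega(\ker dm)\otimes\Omega^{1/2}(\ker dr_\gamma\oplus\ker ds_\gamma)=\Omega(\ker dm)\otimes m^*(\Omega^{1/2})$, as required. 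The only genuinely delicate point is this double description of $\ker dm$: it is the simultaneous realisation of $m^{-1}(\gamma)$ as a copy of $G^{r(\gamma)}$ and of $G_{s(\gamma)}$ that forces $\Omega(\ker dm)$ to split as $\Omega^{1/2}(\ker dr_{\gamma_1})\otimes\Omega^{1/2}(\ker ds_{\gamma_2})$ rather than as a square of a single half-density, and this is exactly what makes the two leftover factors match. The remaining verification, that $m$, $R_{\gamma_2}$, $L_{\gamma_1}$, $\pr{1}$ and $\pr{2}$ depend smoothly on $(\gamma_1,\gamma_2)$ so that the fibrewise isomorphism is a smooth bundle isomorphism, is routine since all of these are smooth.
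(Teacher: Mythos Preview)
Your proof is correct and follows essentially the same route as the paper's: both arguments use the two diffeomorphisms $\pr{1}\colon m^{-1}(\gamma)\to G^{r(\gamma)}$ and $\pr{2}\colon m^{-1}(\gamma)\to G_{s(\gamma)}$ to identify $\ker dm$ with $\ker dr_{\gamma_1}$ and with $\ker ds_{\gamma_2}$ respectively, and the right/left translations $R_{\gamma_2}$, $L_{\gamma_1}$ to identify $\ker ds_{\gamma_1}\simeq\ker ds_\gamma$ and $\ker dr_{\gamma_2}\simeq\ker dr_\gamma$. The only cosmetic difference is that the paper packages these as vector-bundle isomorphisms over the whole fibre $G^{(2)}_\gamma$ while you work pointwise at $(\gamma_1,\gamma_2)$ and then invoke smoothness; the underlying identifications are identical.
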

\begin{proof}
We note $G^{(2)}_\gamma$ the fiber of $m$ at $\gamma$, that is 
\(
 G^{(2)}_\gamma = \{ (\gamma_1,\gamma_2)\in G^{(2)}\ ;\ \gamma_1\gamma_2=\gamma\}.
\) 

%

Now, the restricted map $\mathrm{pr}_1:G_\gamma^{(2)}\to G^{r(\gamma)}$ being a diffeomorphism, we have a canonical isomorphism of vector bundles 
\[
 TG^{(2)}_\gamma \overset{\simeq}{\longrightarrow} \mathrm{pr}_1^*(TG^{r(\gamma)})=\mathrm{pr}_1^*(\ker dr)|_{G^{(2)}_\gamma},\ (\gamma_1,\gamma_2,X_1,X_2)\longmapsto  (\gamma_1,\gamma_2,X_1).
\]
Similarly, $TG^{(2)}_\gamma\simeq \mathrm{pr}_2^*(TG_{s(\gamma)})$. Moreover the map
\[
 \mathrm{pr}_1^*(\ker ds)|_{G^{(2)}_\gamma} \overset{\simeq}{\longrightarrow} G^{(2)}_\gamma\times T_\gamma G_{s(\gamma)},\ (\gamma_1,\gamma_2,X_1)\longmapsto  (\gamma_1,\gamma_2, (dR_{\gamma_2})_{\gamma_1}(X_1))
\]
provides a canonical trivialisation of the vector bundle $\mathrm{pr}_1^*(\ker ds)|_{G^{(2)}_\gamma}$. The same holds for 
\[
 \mathrm{pr}_2^*(\ker dr)|_{G^{(2)}_\gamma}\simeq G^{(2)}_\gamma\times T_\gamma G^{r(\gamma)}.
\]
With these isomorphisms in hand, we get 
\begin{eqnarray*}
 \mathrm{pr}_1^*(\Omega^{1/2})|_{G^{(2)}_\gamma}\otimes \mathrm{pr}_2^*(\Omega^{1/2})|_{G^{(2)}_\gamma}&\simeq &\Omega^{1/2}(\mathrm{pr}_1^*(\ker dr \oplus\ker ds)|_{G^{(2)}_\gamma}\oplus\mathrm{pr}_2^*(\ker dr \oplus\ker ds)|_{G^{(2)}_\gamma})\\
&\simeq & \Omega^{1/2} (TG^{(2)}_\gamma\oplus T_\gamma G_{s(\gamma)}\oplus T_\gamma G^{r(\gamma)}\oplus TG^{(2)}_\gamma)\\
&\simeq &\Omega (TG^{(2)}_\gamma)\otimes \Omega^{1/2} ( T_\gamma G_{s(\gamma)}\oplus T_\gamma G^{r(\gamma)}).
\end{eqnarray*}
This gives the canonical isomorphim \eqref{eq:why-densities-match-on-gpd}.
\end{proof}

Since in the basic formula \eqref{eq:basic_formula_convolution_functions}
the function under sign of integration 
\[
 G^{(2)}_\gamma \ni (\gamma_1,\gamma_2)\mapsto f(\gamma_1)g(\gamma_2) \in \left( \mathrm{pr}_1^*(\Omega^{1/2}) \otimes \mathrm{pr}_2^*(\Omega^{1/2})\right)_{(\gamma_1,\gamma_2)}
\]
is  a $C^\infty$ section of the bundle
\(
  \left(\mathrm{pr}_1^*(\Omega^{1/2}) \otimes \mathrm{pr}_2^*(\Omega^{1/2})\right)|_{G^{(2)}_\gamma},
\)
Lemma \ref{lem:manip-demi-densite-G} shows that \eqref{eq:basic_formula_convolution_functions} is the integral of a one density, canonically associated with $f,g$ over the submanifold $m^{-1}(\gamma)$ and that the result is a $C^\infty$ section of $\Omega^{1/2}$. Further computations on densities show that the statement 
\begin{equation}\label{eq:variant_basic_formula_convolution_functions}
 f*g(\gamma)=\int_{G^{r(\gamma)}}f(\gamma_1)g(\gamma_1^{-1}\gamma) = \int_{G_{s(\gamma)}}f(\gamma\gamma_2^{-1})g(\gamma_2)
\end{equation}
makes sense and is true. The involution on $C^\infty_c(G,\Omega^{1/2})$ is also natural in terms of densities 
\[
  f^\star = \overline{i^*(f)},\quad f\in C^\infty_c(G,\Omega^{1/2})
\]
where $i$ is the induced vector bundle isomorphism over the inversion map of $G$
\[
 \ker dr\oplus \ker ds\longrightarrow \ker dr\oplus \ker ds,\ (\gamma,X_1,X_2)\longmapsto (\gamma^{-1},di(X_2),di(X_1)).
\]

The spaces $C^\infty(G,\Omega^{-1/2}\otimes \Omega_G)$ and $C^\infty_c(G,\Omega^{-1/2}\otimes \Omega_G)$ are endowed with their usual Fr\'echet and $\cL\cF$ topological vector space structures and we denote by $\cE'(G,\Omega^{1/2})$ and $\cD'(G,\Omega^{1/2})$ their topological duals. The choice of densities is made so that we have canonical embeddings 
\[
 C^\infty(G,\Omega^{1/2})\hookrightarrow \cD'(G,\Omega^{1/2}) \text{ and } C^\infty_c(G,\Omega^{1/2})\hookrightarrow \cE'(G,\Omega^{1/2}).
\]
For simplicity, we assume in the sequel that $G^{(0)}$ is compact, thus $\cE'(G,\Omega^{1/2})=\cD'_{\fc-\pi}(G,\Omega^{1/2})$ and $\cE'_\pi(G,\Omega^{1/2})=\cP'_{\pi}(G,\Omega^{1/2})$ if $\pi\in\{r,s\}$.
\begin{thm}\label{thm:convolution-transversal-distribution-groupoid}
 The bilinear map  
\begin{align}\label{eq:cont-left-s-tranv-conv}
 \cE'_{s}(G,\Omega^{1/2}) \times \cE'(G,\Omega^{1/2}) & \overset{*}{\longrightarrow} \cE'(G,\Omega^{1/2})\\
     (u,v)& \longmapsto u*v=m_*(u\underset{s}{\times}v)\nonumber 
\end{align}
is well defined and separately continuous. Also, the maps   
\begin{align}\label{eq:cont-left-s-tranv-conv-without-cpct-supp}
 \cD'(G,\Omega^{1/2}) & \overset{*}{\longrightarrow} \cD'(G,\Omega^{1/2}) & \text{ and } && \cD_s'(G,\Omega^{1/2}) & \overset{*}{\longrightarrow} \cD'_s(G,\Omega^{1/2}) \\
     v & \longmapsto u_0*v=m_*(u_0\underset{s}{\times}v) &  && u & \longmapsto u*v_0=m_*(u\underset{s}{\times}v_0)\nonumber 
\end{align}
are well defined and continuous for any $u_0\in \cE'_{s}(G,\Omega^{1/2})$ and $v_0\in \cE'(G,\Omega^{1/2})$. Similar statements are  available for $r$-transversal distributions used as right variables. We get   
by restriction   separately continuous bilinear maps 
\begin{equation}\label{eq:convolution-restricted}
\cE'_{\pi}(G,\Omega^{1/2}) \times \cE'_{\pi}(G,\Omega^{1/2}) \overset{*}{\longrightarrow} \cE'_{\pi}(G,\Omega^{1/2})
\end{equation}
for $\pi=r$ and $\pi=s$. The  space $(\cE'_{\pi}(G,\Omega^{1/2}),*)$ is an associative algebra   with unit given by 
\begin{equation}
 \langle \delta ,f \rangle = \int_{G^{(0)}}f, \quad  f\in C^\infty(G,\Omega^{-1/2}\otimes\Omega_G). 
\end{equation}
In particular $(\cE'_{r,s}(G,\Omega^{1/2}),*)$  is an associative unital algebra with involution given by 
\begin{equation}
  u^\star = \overline{i^*(u)}.
\end{equation}
\end{thm}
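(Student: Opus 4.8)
The plan is to build the convolution $u * v = m_*(u \underset{s}{\times} v)$ out of the two operations studied in Section \ref{sec:2}: the fibered product over the square \eqref{M_fibered_product} specialized to $M_1 = M_2 = G$, $\pi_1 = s$, $\pi_2 = r$, $B = G^{(0)}$ (so that $M_1 \underset{\pi}{\times} M_2 = G^{(2)}$), followed by the push-forward along the multiplication $m : G^{(2)} \to G$. Before anything else one settles the half-density bookkeeping: for $u, v \in \cD'(G, \Omega^{1/2})$ the fibered product is valued in $\pr{1}^*\Omega^{1/2} \otimes \pr{2}^*\Omega^{1/2}$ on $G^{(2)}$, exactly as the integrand of \eqref{eq:basic_formula_convolution_functions}, and Lemma \ref{lem:manip-demi-densite-G} gives $\pr{1}^*\Omega^{1/2} \otimes \pr{2}^*\Omega^{1/2}|_{G^{(2)}} \simeq \Omega(\ker dm) \otimes m^*\Omega^{1/2}$; thus $m_*$ integrates the fibrewise $1$-density and returns a distribution on $G$ valued in $\Omega^{1/2}$. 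With this identification the bundle-valued versions of the results of Section \ref{sec:2} (Remark \ref{rmk:some-inversion-formula}, \eqref{eq:P-with-bundle-over-base}) apply verbatim.

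I would then dispatch the maps one at a time, in each case naming the transversality that defines the fibered product and the triangle that governs the push-forward, the relevant groupoid identities being $s \circ m = s \circ \pr{2}$ and $r \circ m = r \circ \pr{1}$. For \eqref{eq:cont-left-s-tranv-conv}, $u \in \cE'_s$ provides the $\pi_1 = s$-transversality required by the first map of Proposition \ref{lem:transv-heredity-fibered-product-case} to form $u \underset{s}{\times} v$ for arbitrary $v$; since $\supp{u}$ and $\supp{v}$ are compact, so is $\supp{u \underset{s}{\times} v}$, hence $m$ is proper there and Proposition \ref{prop:push-forward-fibered-product} (with a cutoff equal to $1$ near that support) yields $u * v \in \cE'(G, \Omega^{1/2})$, separate continuity coming from composing the separate continuity of the fibered product with the continuity of $m_*$. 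For \eqref{eq:cont-left-s-tranv-conv-without-cpct-supp} the support of the fixed factor makes $m$ proper on the (no longer compact) support of the fibered product: for $u_0 \underset{s}{\times} v$ one cuts off by $\varphi_1 \circ \pr{1}$ with $\varphi_1 \equiv 1$ near $\supp{u_0}$, and for $u \underset{s}{\times} v_0$ by $\varphi_2 \circ \pr{2}$ with $\varphi_2 \equiv 1$ near $\supp{v_0}$, since in each case one coordinate and the product determine the other. The transversality of $u * v_0$ is the one genuinely delicate point: $u \in \cD'_s$ makes $u \underset{s}{\times} v_0$ transversal to $\pr{2}$, while the compactness of $\supp{v_0}$ confines the support of $(\pr{2})_*\big((u \underset{s}{\times} v_0)\,f\big)$ to $\supp{v_0}$, so that composing with $s_*$ and using $s \circ \pr{2} = s \circ m$ puts $u * v_0$ in $\cD'_s$; thus it is the support of $v_0$, not its transversality, that is used. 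The $r$-transversal statements are the mirror image, built from the second fibered product $u \underset{r}{\times} v$ (which calls on the right variable to be $r$-transversal) and the identity $r \circ m = r \circ \pr{1}$; restricting to $\cE'_s \times \cE'_s$ and $\cE'_r \times \cE'_r$ then gives \eqref{eq:convolution-restricted} for $\pi = s$ and $\pi = r$ respectively, while on $\cE'_{r,s} = \cE'_r \cap \cE'_s$ the Fubini equality \eqref{eq:fubini-style-fibered-prod} shows the two recipes for $*$ agree.

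For the algebra structure I would first treat the unit: $G^{(0)}$ is a common section of $s$ and $r$, so by Proposition \ref{prop:transversal-layers} the distribution $\delta$ of the statement lies in $\cE'_{r,s}$, and evaluating $m_*(\delta \underset{s}{\times} u)$ and $m_*(u \underset{s}{\times} \delta)$ against test densities (reading \eqref{eq:variant_basic_formula_convolution_functions} on half-densities) collapses the integral over a unit and returns $u$. Associativity is the main obstacle. I would introduce the iterated fibered product $G^{(3)}$ together with the two factorizations $m \circ (m \times \id) = m \circ (\id \times m) =: m^{(3)}$ of the triple product and show that both $(u*v)*w$ and $u*(v*w)$ equal $m^{(3)}_*(u \underset{s}{\times} v \underset{s}{\times} w)$, the triple fibered product being unambiguous by the Fubini clause of Proposition \ref{lem:transv-heredity-fibered-product-case}. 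The crux is a base-change identity commuting a push-forward past a fibered product, e.g. $\big(m_*(u \underset{s}{\times} v)\big) \underset{s}{\times} w = (m \times \id)_*\big(u \underset{s}{\times} v \underset{s}{\times} w\big)$, over the cartesian square attached to $m \times \id : G^{(3)} \to G^{(2)}$; I expect the real work to be in verifying that on $\cE'_s$ (resp. $\cE'_r$) all the intermediate transversalities granted by \eqref{eq:fancy-fibered-product-map} are present, so that every push-forward and every fibered product in the chain is legitimate and continuous, and then in invoking the distributional Fubini theorem to identify the two iterated constructions.

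Finally, the involution. Since inversion $i$ exchanges $s$ and $r$, the pullback $i^*$ carries $\cE'_s$ onto $\cE'_r$ and conversely, hence preserves $\cE'_{r,s}$, and together with complex conjugation and the half-density isomorphism recorded for the involution on $C^\infty_c(G, \Omega^{1/2})$ it defines an antilinear map $u \mapsto u^\star = \overline{i^*(u)}$ with $(u^\star)^\star = u$ because $i^2 = \id$. The relation $(u*v)^\star = v^\star * u^\star$ I would deduce from the groupoid identity $i \circ m = m \circ \tau \circ (i \times i)$, with $\tau$ the flip of the two factors, by transporting the defining push-forward of a fibered product through these diffeomorphisms and matching the half-density data via Lemma \ref{lem:manip-demi-densite-G}. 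This promotes $(\cE'_{r,s}(G, \Omega^{1/2}), *)$ to a unital involutive algebra.
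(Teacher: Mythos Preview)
Your overall strategy---assembling $*$ from the fibered product of Proposition~\ref{lem:transv-heredity-fibered-product-case} followed by the push-forward of Proposition~\ref{prop:push-forward-fibered-product}, with the half-density bookkeeping handled by Lemma~\ref{lem:manip-demi-densite-G}---is exactly the paper's, and your cutoff constructions $\varphi_j\circ\pr{j}$ for rendering $m$ proper are the same as the paper's choice $\varphi=\phi\circ\pr{1}$.

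There is one genuine error. You assert that ``$u\in\cD'_s$ makes $u\underset{s}{\times}v_0$ transversal to $\pr{2}$'' for arbitrary $v_0\in\cE'$. This is false. Proposition~\ref{prop:comm-square-transv-submersions} gives only that $\pr{1}^*(u)$ is $\pr{2}$-transversal; the fibered product $u\underset{s}{\times}v_0$ is obtained by composing $\pr{1}^*(u)$ with $v_0$ (formula~\eqref{eq:fibered-prod-dist-via-linear-map}), and the composite is only $\sigma\circ\pr{2}$-transversal where $v_0\in\cD'_\sigma$. Concretely, $(\pr{2})_*\big((u\underset{s}{\times}v_0)F\big)=\phi\cdot v_0$ with $\phi(\gamma_2)=\langle u_{r(\gamma_2)},F(\cdot,\gamma_2)\rangle$, which is a distribution, not a smooth function, when $v_0$ is singular. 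On the pair groupoid $S^1\times S^1$ with $v_0=\delta_{(0,0)}$ one finds $u*v_0=u_0\otimes\delta_0\notin\cD'_s$. The paper's own proof does not actually establish $u*v_0\in\cD'_s$ either; it asserts continuity of $u\mapsto u*v_0$ ``on $\cE'_s,\cD'_s$ in the same way'', which should be read as continuity into $\cD'$ with the $\cD'_s$-topology on the source. So the codomain $\cD'_s$ in \eqref{eq:cont-left-s-tranv-conv-without-cpct-supp} is most likely a misprint for $\cD'$, and your attempt to justify it cannot succeed.

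For associativity you take a genuinely different route from the paper. You propose building the triple fibered product on $G^{(3)}$ and proving a base-change identity $(m_*(u\underset{s}{\times}v))\underset{s}{\times}w=(m\times\id)_*(u\underset{s}{\times}v\underset{s}{\times}w)$; this is correct in principle but requires carefully tracking all the intermediate transversalities you allude to. The paper instead observes that since $*$ is separately continuous and $C^\infty_c(G,\Omega^{1/2})$ is dense in $\cE'_\pi$, associativity propagates from the known associativity of~\eqref{eq:basic_formula_convolution_functions} by a three-step limit. The density argument is much shorter and avoids the bookkeeping entirely. Your treatments of the unit (via Proposition~\ref{prop:transversal-layers}) and of the involution (via $i\circ m=m\circ\tau\circ(i\times i)$) are more explicit than the paper's direct family computation and its ``obvious'', respectively, but they arrive at the same place.
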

\begin{proof}
Applying Proposition \ref{lem:transv-heredity-fibered-product-case}  to the case $M_1=M_2=G$, $B=G^{(0)}$, $\pi_1=s$, $\pi_2=r$ and $\sigma : G\to \{\mathrm{pt}\}$, one gets a distribution $u\underset{s}{\times}v \in \cD'(G^{(2)},\Omega^{1/2})$ which depends continuously on $u$ and $v$.
Since $u\in\cE'$  on can choose $\phi\in C^\infty_c(G)$ such that  $u= \phi u$. Then 
\[
  u \underset{s}{\times}v= \varphi u\underset{s}{\times}v  
\]
where $\varphi =\phi\circ\pr{1}\vert_{G^{(2)}}$ and Proposition \ref{prop:push-forward-fibered-product}  can be applied to the case $f=m$ with $B=\{\hbox{pt}\}$. This gives that  $u*v$ is well defined for $v\in\cD'$ and the continuity of $v\mapsto u*v$ on $\cE',\cD'$ as well.  For fixed $v\in\cE'$, one gets the continuity of $u\mapsto u*v$ on $\cE'_s,\cD'_s$ in the same way. 

To prove the statement involving (\ref{eq:convolution-restricted}) for $\pi=s$ we apply Proposition \ref{lem:transv-heredity-fibered-product-case}  to $M_1=M_2=G$, $B=G^{(0)}$, $\pi_1=s$, $\pi_2=r$ and $\sigma =s$ and Proposition \ref{prop:push-forward-fibered-product} to $\rho=s$ and $\pi =s \circ \pr{2}$.

The associativity of $*$ on distributions follows by continuity and density of $C^\infty_c(G,\Omega^{1/2})$.

We check that  the integral defining $\delta$ has an intrinsic meaning and gives a unit in $\cE'_{r,s}(G,\Omega^{1/2})$.
Since $TG|_{G^{(0)}}=TG^{(0)}\oplus \ker ds$, we have  $\Omega(G)|_{G^{(0)}}=\Omega({G^{(0)}})\otimes \Omega(\ker ds)$.
On the other hand the inversion gives a canonical isomorphism between the bundles $\ker ds|_{G^{(0)}}$ and $\ker dr|_{G^{(0)}}$, thus
\[
 \Omega(\ker ds)=\Omega^{1/2}(\ker ds)\otimes\Omega^{1/2}(\ker ds)\simeq \Omega^{1/2}(\ker ds)\otimes\Omega^{1/2}(\ker dr)=\Omega^{1/2}
\]
Through these canonical identifications, any $f\in C^\infty(G,\Omega^{-1/2}\otimes\Omega_G)$ gives by restriction to $G^{(0)}$ a one density on $G^{(0)}$, which gives a well defined meaning to $\delta(f)$. 
Obviously 
\[
 r_*(f\delta)=s_*(f\delta) = f\vert_{G^{(0)}} \in C^{\infty}(G^{(0)})\subset \cD'(G^{(0)}), \text{ for any } f\in C^\infty(G),
\]
in particular $\delta\in \cE'_{r,s}(G,\Omega^{1/2})$. If $\delta^x\in\cD'(G^x)$, $x\in G^{(0)}$  is the associated $C^\infty$ family, we then get 
by Remark \ref{rmk:some-inversion-formula}  
\[
 \langle \delta^x,\phi \rangle =r_*(\delta \widetilde{\phi})(x) =\phi(x),\ \text{ for any }  \phi\in C^\infty_c(G^x)\text{ and } \widetilde{\phi} \in C^\infty_c(G) \text{ such that  }  \widetilde{\phi}|_{G^x}=\phi .
\]
It follows that for any $ f\in C^\infty(G,\Omega^{-1/2}\otimes\Omega_G)$,
\begin{eqnarray*}
 \langle u*\delta , f \rangle &= & \int_{x\in G^{(0)}} \langle u_x\otimes \delta^x, (f\circ m)|_{G_x\times G^x}\rangle 
 = \int_{x\in G^{(0)}} \langle u_x, f|_{G_x}\rangle 
 = \langle u , f \rangle.
\end{eqnarray*}
The proof of the equality $\delta* u=u$ is similar. The assertion about the involution is obvious. 
\end{proof}
In particular, when one of the two factors is in $C^\infty_c$, the convolution product is defined without any restriction on the other factor. We give a sufficient condition for the result to be $C^\infty$. 
\begin{prop}\label{prop:r-transv-distrib-are-left-multipliers}
The convolution product gives by restriction a bilinear separetely continuous map
\[
 \cD'_{r}(G,\Omega^{1/2})\times C^\infty_c(G,\Omega^{1/2})\overset{*}{\longrightarrow} C^\infty(G,\Omega^{1/2}).
\]
The analogous statement with $C^\infty$ functions on the left and $s$-transversal distributions on the right also holds. The map $u\mapsto u*\cdot$ mapping $\cD'_{r}(G,\Omega^{1/2})$  to  $\cL(C^\infty_c(G,\Omega^{1/2}), C^\infty(G,\Omega^{1/2}))$ is injective.
\end{prop}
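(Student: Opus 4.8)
The plan is to reduce all three assertions to the description of $r$-transversal distributions as $C^\infty$ families along the fibers of $r$ provided by Proposition \ref{prop:family-to-global-distri-submersion-case}, writing $u\in\cD'_r(G,\Omega^{1/2})$ through its family $(u^x)_{x\in G^{(0)}}$, with $u^x\in\cD'(G^x,\dots)$.

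For the first assertion, observe that $g\in C^\infty_c$ is in particular $r$-transversal, so that $u*g$ is defined for \emph{every} $u\in\cD'(G,\Omega^{1/2})$ by Propositions \ref{lem:transv-heredity-fibered-product-case} and \ref{prop:push-forward-fibered-product}; the point is only that $r$-transversality of $u$ forces the result to be smooth. Starting from \eqref{eq:variant_basic_formula_convolution_functions} I would establish, at the level of families, the identity
\[
 u*g(\gamma)=\big\langle u^{r(\gamma)},\ \gamma_1\mapsto g(\gamma_1^{-1}\gamma)|_{G^{r(\gamma)}}\big\rangle ,
\]
the densities matching as in Lemma \ref{lem:manip-demi-densite-G}. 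Fixing a local trivialization of $r$, so that $G^x$ is identified with a fixed fiber $Y$ and $u$ becomes $\widetilde u\in C^\infty(U,\cD'(Y))$, the assignment $\gamma\mapsto\big(\gamma_1\mapsto g(\gamma_1^{-1}\gamma)\big)$ is $C^\infty$ and takes values in test densities whose supports remain in a fixed compact subset of $Y$ while $\gamma$ stays in a compact set, because $g$ is compactly supported and $m,i$ are smooth. Thus $u*g$ is the pairing of a $C^\infty$ family of distributions against a $C^\infty$, uniformly compactly supported family of test densities, and the standard fact for $C^\infty(U,\cD'(Y))$ (reflected in \eqref{eq:strong-weak-cvg-seq-gen-case}, namely that such a pairing is $C^\infty$ and may be differentiated under the bracket) yields $u*g\in C^\infty(G,\Omega^{1/2})$.

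Separate continuity comes out of the same formula: differentiating under the bracket bounds every $C^\infty$-seminorm of $u*g$ over a compact $K$ by a single family-seminorm $p_{k,\cB,K'}(u)$ of \eqref{eq:semin-norms-family-distrib}, where $\cB$ is the bounded set of test densities generated by $g$ and finitely many of its derivatives. Holding $g$ fixed gives continuity of $u\mapsto u*g$, while letting $g$ range over a bounded subset of $C^\infty_c$ produces a bounded $\cB$ and hence continuity of $g\mapsto u*g$. The analogous statement follows by symmetry: the involution $w\mapsto w^\star=\overline{i^*w}$ interchanges $r$- and $s$-transversality (since $i$ swaps $r$ and $s$), preserves $C^\infty_c$ and $C^\infty$, and satisfies $(w_1*w_2)^\star=w_2^\star*w_1^\star$; hence for $f\in C^\infty_c$ and $v\in\cD'_s$ one writes $f*v=(v^\star*f^\star)^\star$ with $v^\star\in\cD'_r$ and $f^\star\in C^\infty_c$, and the first part applies.

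For injectivity, suppose $u\in\cD'_r(G,\Omega^{1/2})$ satisfies $u*g=0$ for all $g\in C^\infty_c(G,\Omega^{1/2})$. Since $u*g$ is now an honest function, I may evaluate it at the units $x_0\in G^{(0)}\subset G$; as $\gamma_1^{-1}x_0=\gamma_1^{-1}$ for $\gamma_1\in G^{x_0}$, the identity above gives $u*g(x_0)=\big\langle u^{x_0},\ \gamma_1\mapsto g(\gamma_1^{-1})\big\rangle$. As $g$ runs over $C^\infty_c(G,\Omega^{1/2})$ its restriction $g|_{G_{x_0}}$ exhausts the test densities on $G_{x_0}$, and inversion is a diffeomorphism $G^{x_0}\overset{\simeq}{\longrightarrow}G_{x_0}$, so $\gamma_1\mapsto g(\gamma_1^{-1})|_{G^{x_0}}$ exhausts the test space of $G^{x_0}$; therefore $u^{x_0}=0$ for every $x_0$, i.e.\ $u=0$ by Proposition \ref{prop:family-to-global-distri-submersion-case}. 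The main obstacle is the smoothness step: checking carefully that the groupoid multiplication produces a genuinely $C^\infty$, uniformly compactly supported family of test densities and that the distributional bracket may be differentiated in the parameter $\gamma$, which enters both through the choice of fiber $G^{r(\gamma)}$ and through the test density. The continuity, the involution reduction, and the injectivity are then essentially formal.
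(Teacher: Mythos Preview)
Your proof is correct and follows essentially the same route as the paper: both identify $u\in\cD'_r$ with its $C^\infty$ family $(u^x)_x$, establish the pointwise formula $u*g(\gamma)=\langle u^{r(\gamma)},\,g((\cdot)^{-1}\gamma)\rangle$, read off smoothness from it, and deduce injectivity by showing each $u^x$ vanishes. The paper is terser---it asserts smoothness of the formula without the local-trivialization discussion, pulls continuity in $u$ from Theorem~\ref{thm:convolution-transversal-distribution-groupoid} rather than via direct seminorm estimates, and leaves the $s$-transversal case implicit---so your added detail (and the involution trick for the symmetric statement) is welcome but does not constitute a different approach.
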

\begin{proof}
If $u=(u^y)_y\in\cD'_r$, the map  
\begin{equation} \label{eq:convol-distrib-fonction}
  \gamma\mapsto \langle u^{r(\gamma)}(\cdot), f((\cdot)^{-1}\gamma)\rangle  
\end{equation}
is $C^\infty$ and by definition of the convolution product we get 
\begin{eqnarray*}
\langle u*f , \phi \rangle  =   \int_{\gamma_2\in G} \langle u^{r(\gamma_2)}(\cdot), f((\cdot)^{-1}\gamma_2) \rangle \phi(\gamma_2). 
\end{eqnarray*} 
Thus  $u*f$ coincides with the $C^\infty$ function  \eqref{eq:convol-distrib-fonction}. The continuity of $u\mapsto u*f$ is given by Theorem 
\ref{thm:convolution-transversal-distribution-groupoid} and repeating  the argument given in its proof, one gets the continuity of $f\mapsto u*f$ on  $C^\infty_0(K,\Omega^{1/2})= \{f\in C^\infty\ ;\ \supp{f}\subset K\}$ for any compact $K\subset G$. The results follows by inductive limit. 

Now, the vanishing of $u*f$ for any $f$ and the previous expression for $u*f$ shows that $u^{x}=0$, for any $x$, and thus $u=0$.
  \end{proof}
 
\begin{rmk}\rm{\label{rmk:compact-support}
Note that if in the previous proposition we suppose that $u$ has compact support $K \subset G$, then $u*f$ can be defined for any map $f \in C^\infty(G,\Omega^{1/2})$. Moreover for any $f \in C^\infty_c(G,\Omega^{1/2})$, then $u*f$ is also compactly supported and $\supp{u*f} \subset K .\supp{f}$.
}\end{rmk}
      
\section{G-operators}\label{sec:4}

We recall the notion of $G$-operators given in \cite{MP} and we add a notion of adjoint for them.
\begin{defn}\label{defn:G-op}
 A (left) $G$-operator is a continuous linear map $P : C^\infty_c(G,\Omega^{1/2})\to C^\infty(G,\Omega^{1/2})$ such that there exists a family $P_x : C^\infty_c(G_x,\Omega^{1/2}_{G_x})\longrightarrow C^\infty(G_x,\Omega^{1/2}_{G_x})$, $x\in G^{(0)}$ of operators such that 
\begin{equation}\label{eq:localization-G-op}
  P(f)|_{G_x} = P_x(f|_{G_x}), \ \forall f\in  C^\infty_c(G,\Omega^{1/2}), \ \forall x\in G^{(0)}
\end{equation}
\begin{equation}\label{eq:equiv-G-op}
 P_{r(\gamma)}\circ R_\gamma=R_\gamma \circ P_{s(\gamma)},\ \forall \gamma\in G.
\end{equation}
A $G$-operator $P$ is said adjointable if there exists a  $G$-operator $Q$ such that
\begin{equation}
  (P(f) \vert g ) = (f \vert Q(g) )   \ ; \quad f,g\in C^\infty_c(G,\Omega^{1/2}).
\end{equation}
Here $(f \vert g )= f^\star *g$  is the $C^\infty_c(G,\Omega^{1/2})$-valued pre-hilbertian product of  $C^\infty_c(G,\Omega^{1/2})$.

We note $\mathrm{Op}_G$ and $\mathrm{Op}^\star_{G}$ respectively the linear spaces of $G$-operators and adjointable ones. 

We say that  $G$-operator $P$ is  supported in  $K$ if  $\supp{P(f)}\subset K.\supp{f}$ for all $f$.  The subspaces of compactly supported $G$-operators are denoted $\mathrm{Op}_{G,c}, \mathrm{Op}^\star_{G,c}$. 
\end{defn} 
%
%
Looking at  $C^\infty_c(G,\Omega^{1/2})$ and $ C^\infty(G,\Omega^{1/2})$ as right $ C^\infty_c(G,\Omega^{1/2})$-modules for the convolution product, $G$-operators can be characterized in a simple way. 
\begin{prop}\label{prop:G-op-equivalent-form}
 A linear operator $P : C^\infty_c(G,\Omega^{1/2})\to C^\infty(G,\Omega^{1/2})$ is a $G$-operator if and only if it is continuous and 
\[
  P(f*g)=P(f)*g\quad \forall f,g\in  C^\infty_c(G,\Omega^{1/2}).
\]
In other words, $\mathrm{Op}_G= \cL_{C^\infty_c(G,\Omega^{1/2})}(C^\infty_c(G,\Omega^{1/2}),C^\infty(G,\Omega^{1/2}))$.
\end{prop}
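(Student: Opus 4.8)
The plan is to prove the two inclusions separately, the content being carried by the fiberwise form of the convolution as a superposition of right translations. Writing the product via \eqref{eq:variant_basic_formula_convolution_functions}, for $x\in G^{(0)}$ and $\gamma\in G_x$ one has $(f*g)(\gamma)=\int_{G_x} f(\gamma\gamma_2^{-1})g(\gamma_2)=\int_{G_x}(R_{\gamma_2^{-1}}f)(\gamma)\,g(\gamma_2)$, so that
\[
(f*g)|_{G_x}=\int_{G_x} R_{\gamma_2^{-1}}\bigl(f|_{G_{r(\gamma_2)}}\bigr)\,g(\gamma_2),
\]
the integral converging in $C^\infty_c(G_x,\Omega^{1/2}_{G_x})$ since $g$ is compactly supported and $\gamma_2\mapsto R_{\gamma_2^{-1}}(f|_{G_{r(\gamma_2)}})$ is smooth with values in that space.

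For the inclusion $\mathrm{Op}_G\subset\cL_{C^\infty_c}(C^\infty_c,C^\infty)$, I would start from a $G$-operator $P$ with family $(P_x)$ and apply $P_x$ to the displayed identity. Since each $P_x$ is continuous and linear it commutes with the fiber integral; the equivariance \eqref{eq:equiv-G-op}, in the form $P_x\circ R_{\gamma_2^{-1}}=R_{\gamma_2^{-1}}\circ P_{r(\gamma_2)}$, together with the localization \eqref{eq:localization-G-op}, turns $P_x(R_{\gamma_2^{-1}}(f|_{G_{r(\gamma_2)}}))$ into $R_{\gamma_2^{-1}}\bigl(P(f)|_{G_{r(\gamma_2)}}\bigr)$. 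Reassembling the integral gives exactly $(P(f)*g)|_{G_x}$, and as $x$ is arbitrary this yields $P(f*g)=P(f)*g$; continuity of $P$ is part of the definition. The only delicate point is the interchange of $P_x$ with the integral, which is legitimate once the integral is read as a convergent $C^\infty_c(G_x)$-valued integral as above.

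For the converse, a continuous map $P\colon C^\infty_c(G,\Omega^{1/2})\to C^\infty(G,\Omega^{1/2})$ with $P(f*g)=P(f)*g$ means precisely that $P$ commutes with every right convolution operator. The plan is to show that such a $P$ is a left convolution $P(f)=u*f$ by a single $r$-transversal distribution $u$, and then to invoke the fact that left convolution by an $r$-transversal distribution is a $G$-operator. The latter is immediate from the first display: for $\gamma\in G_x$, formula \eqref{eq:variant_basic_formula_convolution_functions} shows $(u*f)(\gamma)=\langle u^{r(\gamma)},f((\cdot)^{-1}\gamma)\rangle$ depends only on $f|_{G_x}$, which defines the family $(P_x)$ and gives \eqref{eq:localization-G-op}, while associativity of $*$ (Theorem~\ref{thm:convolution-transversal-distribution-groupoid}) yields \eqref{eq:equiv-G-op}. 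To produce $u$, I would encode $P$ through a Schwartz kernel by means of the kernel theorem for the submersion $s\colon G\to G^{(0)}$ (Theorem~\ref{thm:SKT-submersion}); commutation with right convolution should force this kernel to be right invariant, hence of the form $K_P(\gamma,\eta)=u(\gamma\eta^{-1})$ on $G\underset{s}{\times}G$, which is the desired representation, and one checks that $u$ is $r$-transversal since $u*\cdot$ maps $C^\infty_c$ into $C^\infty$ (Proposition~\ref{prop:r-transv-distrib-are-left-multipliers}).

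The main obstacle is exactly this identification of the kernel as a left-convolution kernel, i.e. deducing $s$-fiberwise locality of $P$ from commutation with right convolutions. This cannot be obtained by a naive ideal argument, since $\{f:f|_{G_x}=0\}$ is a left, not a right, ideal, and right convolution acts within a fiber $G_x$ only through the isotropy group $G_x^x$. The transparent model is the pair groupoid $X\times X$, where convolution is composition of kernels: there one checks directly, testing against rank-one kernels $\alpha\otimes\beta$ and using that right convolution realizes arbitrary changes in the $s$-variable, that commutation with right convolution forces $P$ to act only in the $r$-variable, leaving the $s$-fiber as a parameter. Carrying this computation over to a general Lie groupoid, through Theorem~\ref{thm:SKT-submersion} and the transversality machinery of Section~\ref{sec:2}, is where the real work lies; once $P=u*\cdot$ is established, the $G$-operator property and injectivity follow from Proposition~\ref{prop:r-transv-distrib-are-left-multipliers}.
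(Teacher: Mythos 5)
Your forward direction (every $G$-operator commutes with right convolution) is correct and is essentially the paper's own computation, phrased with a $C^\infty_c(G_x,\Omega^{1/2}_{G_x})$-valued integral and the equivariance \eqref{eq:equiv-G-op} instead of the Schwartz kernels $p_x$ of the $P_x$; that part is fine. The genuine gap is in the converse. There your text is a plan, not a proof: you propose to realize $P$ as $u*\cdot$ via the kernel theorem for $s$ and a right-invariance argument, but you yourself identify the decisive step --- deducing $s$-fiberwise locality of $P$ from $P(f*g)=P(f)*g$ --- as \enquote{where the real work lies} and never carry it out. Since everything else in your converse (the construction of the family $P_x$, the equivariance, the injectivity) hangs on that step, the implication $\cL_{C^\infty_c}(C^\infty_c,C^\infty)\subset \mathrm{Op}_G$ remains unproved.

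Moreover, the reason you give for discarding the direct route is mistaken, and that direct route is exactly how the paper proves it. Set $I_x=\{f\in C^\infty_c(G,\Omega^{1/2})\ ;\ f|_{G_x}=0\}$. You correctly note this is a left ideal: for $\gamma\in G_x$ one has $(h*f)(\gamma)=\int_{G_x}h(\gamma\gamma_2^{-1})f(\gamma_2)$, so $(h*f)|_{G_x}$ involves only $f|_{G_x}$, for \emph{any} smooth $h$, compactly supported or not. But you then conclude that commutation with right convolutions cannot exploit this, because $P$ preserves the right-module structure while $I_x$ is only a left ideal. The missing idea is the approximate identity: choose $\phi_n\in C^\infty_c(G,\Omega^{1/2})$ with $\phi_n\to\delta$ in $\cE'_r$, so that $\phi_n*f\to f$ in $C^\infty_c$. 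For $f\in I_x$ the hypothesis gives $P(\phi_n*f)=P(\phi_n)*f$, which lies in $I_x$ by the left-ideal property applied to $h=P(\phi_n)\in C^\infty$; letting $n\to\infty$ and using the continuity of $P$ yields $P(f)|_{G_x}=0$. In other words, a right-module map satisfies $P(f)=\lim_n P(\phi_n)*f$, so $P(f)$ inherits membership in any closed left ideal containing $f$. This gives the locality \eqref{eq:localization-G-op} at once (define $P_x(f)=P(\widetilde f\,)|_{G_x}$ for any extension $\widetilde f$ of $f$), and the equivariance \eqref{eq:equiv-G-op} follows from the same trick, passing to the limit in $R_\gamma(P_x(\phi_n*f))(\gamma')=P(\phi_n)*f(\gamma'\gamma)=P(\phi_n*(R_\gamma f))(\gamma')$. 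Note also that your proposed detour is not actually an alternative: identifying the kernel of $P$ as a left-convolution kernel $u(\gamma\eta^{-1})$ presupposes precisely this fiberwise locality, so your route cannot bypass the step you left open.
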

\begin{proof}
Let $P\in \mathrm{Op}_G$. Let us write $p_x$ for the Schwartz kernel of $P_x$. For any $f,g$ compactly supported and $\gamma\in G$
\begin{eqnarray*}
 P(f*g)(\gamma) &=& \int_{\gamma_2\in G_{s(\gamma)}}\int_{\gamma_1\in G_{s(\gamma)}}p_{s(\gamma)}(\gamma,\gamma_2)f(\gamma_2 \gamma_1^{-1})g(\gamma_1)\\
 &=&  \int_{\gamma_1\in G_{s(\gamma)}}\left(\int_{\gamma_2\in G_{s(\gamma)}}p_{s(\gamma)}(\gamma,\gamma_2)(R_{\gamma_1^{-1}}f)(\gamma_2)\right)g(\gamma_1)\\
 &=& \int_{\gamma_1\in G_{s(\gamma)}}\left(\int_{\gamma_2\in G_{s(\gamma)}}p_{r(\gamma_1)}(\gamma\gamma_1^{-1},\gamma_2)f(\gamma_2)\right)g(\gamma_1)\\
&=& \int_{\gamma_1\in G_{s(\gamma)}} P(f)(\gamma\gamma_1^{-1})g(\gamma_1)= P(f)*g(\gamma).
\end{eqnarray*}
Conversely, let $f\in C^\infty_c(G,\Omega^{1/2})$ and $x\in G^{(0)}$ such that $f\vert_{G_x}=0$. observe that  $(g*f)\vert_{G_x}=0$  for any $g\in C^\infty_c(G,\Omega^{1/2})$. It follows that $P(g*f)\vert_{G_x}=P(g)*f\vert_{G_x}=0$. Choose a sequence $\phi_n\in C^\infty_c(G,\Omega^{1/2})$ converging to $\delta$ in $\cE'_r$. Then $\phi_n*f$ converges to $f$ in $C^\infty_c(G,\Omega^{1/2})$ and therefore  
\[
  P(f)(\gamma)= \lim P(\phi_n*f)(\gamma)=0   \quad \forall \gamma\in G_x.
\]
In other words, $P(f)\vert_{G_x}$ only depends on $f\vert_{G_x}$ and we can define  $P_x$ for any $x$ by
\[ 
  P_x(f) = P(\widetilde{f})\vert_{G_x} \quad \forall f\in C^\infty_c (G_x,\Omega^{1/2}_{G_x})\text{ and }\widetilde{f}\in C^\infty_c(G,\Omega^{1/2}) \text{ such that } \widetilde{f}\vert_{G_x} =f.
\]
Let $\gamma \in G^y_x$. Then for any $\gamma'\in G^y$ and $f\in C^\infty_c(G,\Omega^{1/2})$, we have 
\begin{eqnarray*}
 R_\gamma(P_x(\phi_n*f))(\gamma')&=& P(\phi_n*f)(\gamma'\gamma)\\
 &=& P(\phi_n)*f(\gamma'\gamma) = P(\phi_n)*(R_\gamma f)(\gamma')= P(\phi_n*(R_\gamma f))(\gamma').
\end{eqnarray*}
Taking the limit in this equality gives \eqref{eq:equiv-G-op}. 
\end{proof}

Let $u\in \cD'_r(G,\Omega^{1/2})$. Using  Propositions \ref{prop:r-transv-distrib-are-left-multipliers} and \ref{prop:G-op-equivalent-form}, we can define $P\in \mathrm{Op}_G$ by setting $P(f)=u*f$ for any $f\in C^\infty_c(G,\Omega^{1/2})$.

Conversely, let $P\in \mathrm{Op}_G$ and $p_x\in \cD'(G_x\times G_x)$ the Schwartz kernel of $P_x$, $x\in G^{(0)}$. Since 
\[
 \gamma \longmapsto P(f)(\gamma) = \int p_{s(\gamma)}(\gamma,\gamma_1)f(\gamma_1)
\]
is $C^\infty$ for any $f$, we get that $\gamma\mapsto  p_{s(\gamma)}(\gamma,\cdot)$ belongs to $\cD'_{\pr{1}}(G\underset{s}{\times}G)$ and then
using Proposition \ref{prop:restriction-to-sub-submersion}, it restricts to the map $G^{(0)}\ni x\mapsto p_{x}(x,\cdot)$  belonging to $\cD'_s(G)$. Defining $k_P\in\cD'_r(G)$ by $k_P(\gamma)= p_{r(\gamma)}(r(\gamma),\gamma^{-1})$, we get for any $f\in C^\infty_c(G,\Omega^{1/2})$, $x,y\in G^{(0)}$ and $\gamma\in G_x^y$
\begin{eqnarray}
P(f)(\gamma) &=&  \int_{G_x} p_x(\gamma,\gamma_1)f(\gamma_1)
            =  \int_{G_y} p_y(y,\gamma_1)f(\gamma_1\gamma)    \nonumber\\
          &=& \int_{G^y} p_y(y,\gamma_1^{-1})f(\gamma_1^{-1}\gamma)   
            = \langle (k_P)_y, f((\cdot)^{-1}\gamma)\rangle_{G^y} =k_P * f (\gamma).
\end{eqnarray}
Thus $P$ the operator given by left convolution with  $k_P$.  We call $k_P$ {\sl   the convolution distributional kernel} of $P$. Note that $\supp{P}=\supp{k_P}$. 
We have proved 
\begin{thm}\label{thm:G-op-transversal-distrib}
 The map $P\mapsto k_P$ gives  the isomorphisms 
\begin{equation}
 \mathrm{Op}_{G}\simeq\cD'_r(G,\Omega^{1/2})\ \text{ and }\ \mathrm{Op}_{G,c}\simeq\cE'_r(G,\Omega^{1/2}).
\end{equation}
\end{thm}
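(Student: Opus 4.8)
The plan is to exhibit mutually inverse maps between $\mathrm{Op}_G$ and $\cD'_r(G,\Omega^{1/2})$, since $P\mapsto k_P$ already produces a candidate in one direction while Proposition \ref{prop:r-transv-distrib-are-left-multipliers} produces the other. First I would go from distributions to operators: given $u\in\cD'_r(G,\Omega^{1/2})$, set $P_u(f)=u*f$. Proposition \ref{prop:r-transv-distrib-are-left-multipliers} guarantees that $P_u$ maps $C^\infty_c(G,\Omega^{1/2})$ continuously into $C^\infty(G,\Omega^{1/2})$, and the associativity of the convolution product established in Theorem \ref{thm:convolution-transversal-distribution-groupoid} gives $P_u(f*g)=u*(f*g)=(u*f)*g=P_u(f)*g$, so by the characterization in Proposition \ref{prop:G-op-equivalent-form} we have $P_u\in\mathrm{Op}_G$. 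Thus $u\mapsto P_u$ is a well defined map $\cD'_r(G,\Omega^{1/2})\to\mathrm{Op}_G$, and its injectivity is exactly the last assertion of Proposition \ref{prop:r-transv-distrib-are-left-multipliers}.

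The core of the argument is the reverse construction, recovering a distribution from an abstract $G$-operator $P$. I would denote by $p_x\in\cD'(G_x\times G_x)$ (with the appropriate half-density weights) the Schwartz kernels of the fiber operators $P_x$, so that $P(f)(\gamma)=\int p_{s(\gamma)}(\gamma,\gamma_1)f(\gamma_1)$. The key regularity input is that $\gamma\mapsto P(f)(\gamma)$ is smooth for every $f$: reinterpreted through the family picture of Theorem \ref{thm:SKT-submersion}, this says precisely that $\gamma\mapsto p_{s(\gamma)}(\gamma,\cdot)$ is a smooth family in $\gamma$, that is, it defines a distribution in $\cD'_{\pr{1}}(G\underset{s}{\times}G)$. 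Restricting this family to the units by Proposition \ref{prop:restriction-to-sub-submersion} yields $x\mapsto p_x(x,\cdot)\in\cD'_s(G)$, and transporting along the inversion $i$ (which interchanges $s$- and $r$-transversality) produces $k_P(\gamma)=p_{r(\gamma)}(r(\gamma),\gamma^{-1})\in\cD'_r(G,\Omega^{1/2})$.

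It then remains to check that $k_P$ represents $P$, i.e. $P(f)=k_P*f$. I would verify this by a direct change of variables in the fiber integral for $P(f)(\gamma)$: writing $\gamma\in G_x^y$, rewrite $p_x(\gamma,\gamma_1)$ using the intertwining relation $P_{r(\gamma)}\circ R_\gamma=R_\gamma\circ P_{s(\gamma)}$ of \eqref{eq:equiv-G-op} so that the integral over $G_x$ becomes the pairing $\langle (k_P)_y, f((\cdot)^{-1}\gamma)\rangle$ defining $k_P*f$. This shows $P=P_{k_P}$, so $P\mapsto k_P$ is a right inverse of $u\mapsto P_u$; combined with the injectivity of $u\mapsto P_u$ this makes the two maps mutually inverse and yields the isomorphism $\mathrm{Op}_G\simeq\cD'_r(G,\Omega^{1/2})$. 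For the compactly supported refinement I would observe that $\supp P=\supp k_P$, using the support bookkeeping of Remark \ref{rmk:compact-support}, so the isomorphism restricts to $\mathrm{Op}_{G,c}\simeq\cE'_r(G,\Omega^{1/2})$.

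I expect the main obstacle to be the middle step: rigorously assembling the individual fiber kernels $p_x$ into a single transversal distribution on $G$ and keeping the half-density bundles $\Omega^{1/2}$ correctly matched through the fiberwise Schwartz kernel theorem, the restriction to units, and the inversion — together with the change of variables that turns the fiber integral into a convolution, where the invariance relation \eqref{eq:equiv-G-op} must be invoked at exactly the right point.
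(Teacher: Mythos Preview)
Your proposal is correct and follows essentially the same argument as the paper: both directions are constructed identically (convolution by $u$ gives a $G$-operator via Propositions \ref{prop:r-transv-distrib-are-left-multipliers} and \ref{prop:G-op-equivalent-form}; conversely the fiberwise Schwartz kernels $p_x$ are assembled into a $\pr{1}$-transversal distribution on $G\underset{s}{\times}G$, restricted to units via Proposition \ref{prop:restriction-to-sub-submersion}, and pulled back through the inversion to produce $k_P$), and the verification $P(f)=k_P*f$ is the same change of variables exploiting \eqref{eq:equiv-G-op}. Your explicit mention of the injectivity of $u\mapsto P_u$ to close the bijection is a slight elaboration, but the overall route is the paper's.
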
 
If $k_P\in\cD'_{r,s}(G,\Omega^{1/2})$ then $P$ is obviously adjointable and $k_{P^\star}=(k_P)^\star$. Conversely, if $P$ as an adjoint $Q$ then
\begin{equation}
  (k_P*f)^\star * g = (f^\star *k_P^\star )*g = f^\star *(k_Q*g)   \ ; \quad f,g\in C^\infty_c(G,\Omega^{1/2}),
\end{equation}
hence $k_P^\star = k_Q\in \cD'_s(G,\Omega^{1/2})\cap\cD'_r(G,\Omega^{1/2}) $. Thus Theorem \ref{thm:G-op-transversal-distrib} yields
\begin{cor}
 The  map $P\to k_P$ gives an isomorphism
\begin{equation}
  \mathrm{Op}_{G}^\star \simeq  \cD'_{r,s}(G,\Omega^{1/2}).
\end{equation}
\end{cor}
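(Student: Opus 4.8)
The plan is to deduce the corollary from the isomorphism $P\mapsto k_P$ of Theorem \ref{thm:G-op-transversal-distrib}, which already identifies $\mathrm{Op}_G$ with $\cD'_r(G,\Omega^{1/2})$: it is enough to prove that a $G$-operator $P$ is adjointable precisely when $k_P\in\cD'_{r,s}(G,\Omega^{1/2})=\cD'_r(G,\Omega^{1/2})\cap\cD'_s(G,\Omega^{1/2})$. Everything rests on two structural facts. First, because the inversion $i$ interchanges $r$ and $s$, the involution $u\mapsto u^\star=\overline{i^*(u)}$ exchanges $\cD'_r$ and $\cD'_s$ and therefore preserves $\cD'_{r,s}$. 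Second, this involution is an anti-homomorphism for the convolution, $(u*v)^\star=v^\star*u^\star$, wherever the products are defined. The easy inclusion is then formal: if $k_P\in\cD'_{r,s}$ then $k_P^\star\in\cD'_{r,s}\subset\cD'_r$, so by Theorem \ref{thm:G-op-transversal-distrib} the left convolution $Q:g\mapsto k_P^\star*g$ is a $G$-operator, and using associativity together with the anti-homomorphism property one checks
\[
 (P(f)\vert g)=(k_P*f)^\star*g=f^\star*(k_P^\star*g)=(f\vert Q(g)),\qquad f,g\in C^\infty_c(G,\Omega^{1/2}),
\]
so that $P$ is adjointable with $P^\star=Q$ and $k_{P^\star}=(k_P)^\star$.

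For the converse, suppose $P$ admits an adjoint $Q\in\mathrm{Op}_G$, whose kernel satisfies $k_Q\in\cD'_r$ by Theorem \ref{thm:G-op-transversal-distrib}. Expanding the adjointness relation $(P(f)\vert g)=(f\vert Q(g))$ and using $(k_P*f)^\star=f^\star*k_P^\star$ with associativity, I would obtain
\[
 f^\star*(k_P^\star*g)=f^\star*(k_Q*g),\qquad\forall\,f,g\in C^\infty_c(G,\Omega^{1/2}).
\]
The plan is to strip off the common test functions. Cancelling the left factor $f^\star$ by an approximate identity converging to $\delta$ gives $k_P^\star*g=k_Q*g$ for all $g$; applying the involution to this identity turns it, via the anti-homomorphism property, into $g^\star*k_P=g^\star*k_Q^\star$, and a second left cancellation produces $k_P=k_Q^\star$. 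Since $k_Q\in\cD'_r$ forces $k_Q^\star\in\cD'_s$, this yields $k_P\in\cD'_r\cap\cD'_s=\cD'_{r,s}$, completing the identification $\mathrm{Op}_G^\star\simeq\cD'_{r,s}(G,\Omega^{1/2})$.

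The step I expect to be the main obstacle is precisely this cancellation, i.e.\ the non-degeneracy statement that $f^\star*H=0$ for all $f\in C^\infty_c$ forces $H=0$. The injectivity recorded in Proposition \ref{prop:r-transv-distrib-are-left-multipliers} concerns multipliers lying in $\cD'_r$, whereas $k_P^\star$ is a priori only $s$-transversal, so it cannot be invoked verbatim on the difference $k_P^\star-k_Q$; this is exactly why I would cancel from the left and then use the involution to move the surviving test factor back to the left before cancelling again, rather than attempt a single cancellation that would compare objects of different transversality type. The genuinely delicate points are that the kernels $k_P,k_Q$ need not be compactly supported, so one must verify that each convolution written above is defined under the transversality and support hypotheses of Theorem \ref{thm:convolution-transversal-distribution-groupoid} and that $\star$ really distributes over these particular products, and that the approximate-identity limit $\phi_n*H\to \delta*H=H$ holds through the separately continuous convolution of Theorem \ref{thm:convolution-transversal-distribution-groupoid}, namely that left convolution is continuous in the left variable for the possibly non-compactly-supported right factors occurring here. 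Checking these two points, rather than the algebraic skeleton of the argument, is where the real work lies.
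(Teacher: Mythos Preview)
Your approach is essentially the paper's: the authors write exactly the identity $(k_P*f)^\star*g=(f^\star*k_P^\star)*g=f^\star*(k_Q*g)$ and then simply assert ``hence $k_P^\star=k_Q$'', drawing the conclusion $k_P\in\cD'_r\cap\cD'_s$ in one line, without spelling out the cancellation. Your two-step cancellation via approximate identities, together with your honest flagging of the support and continuity issues needed to justify it, is precisely the work the paper leaves implicit; you have correctly located where the analytic content lies.
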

\begin{rmk}
\rm{Rephrazing the previous results, we have, for instance
\[
 \mathrm{Op}_{G} \simeq \cL_{s}(C^\infty_c(G,\Omega^{1/2}),C^\infty(G^{(0)})).
\]
where we have replaced $\cL_{C^\infty(G^{(0)})}$ by $\cL_s$ to emphasize that the $C^\infty(G^{(0)})$-module structure on $C^\infty_c(G,\Omega^{1/2})$ is given by $s$. Also
\[
 \mathrm{Op}^\star_{G} \simeq \cL_{r,s}(C^\infty_c(G,\Omega^{1/2}),C^\infty(G^{(0)})).
\]
where $\cL_{r,s}=\cL_s\cap\cL_r$. 
In terms of Schwartz kernel theorems for submersions, $G$-operators thus appear as \enquote{semi-regular} distributions  (see Treves \cite[p.532]{Treves1967}) since, for $\pi=s$ or $\pi=r$
$$\cD'(G,\Omega^{1/2})\simeq \cL_{\pi}(C^\infty_c(G,\Omega^{1/2}),\cD'(G^{(0)})).$$}
\end{rmk}

Now observe that if $k_P\in \cE_{r,s}'(G,\Omega^{1/2})$, Theorem \ref{thm:convolution-transversal-distribution-groupoid} implies that $P$ extends continuously to a map $\cD'(G,\Omega^{1/2})\longrightarrow \cD'(G,\Omega^{1/2})$ sending the subspace $\cE_{r,s}'$ to $\cE_{r,s}'$. This leads to another characterization of adjointness. 
\begin{prop} 
A compactly supported $G$-operator $P$ is adjointable if and only if it extends continuously to a map
\[
  \widetilde{P} : \cD'(G,\Omega^{1/2})\longrightarrow \cD'(G,\Omega^{1/2})
\]
such that $\widetilde{P}(\delta)\in \cD_{r,s}'(G,\Omega^{1/2})$. 
In that case,  $\widetilde{P}=k_P*\cdot$.
\end{prop}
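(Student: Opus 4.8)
The plan is to push everything through the kernel dictionary of Theorem~\ref{thm:G-op-transversal-distrib} and its Corollary, reducing both implications to properties of $k_P$, and then to read off the continuity needed from Theorem~\ref{thm:convolution-transversal-distribution-groupoid}. Since $P$ is a compactly supported $G$-operator, Theorem~\ref{thm:G-op-transversal-distrib} gives $k_P\in\cE'_r(G,\Omega^{1/2})$ with $P(f)=k_P*f$ for all $f\in C^\infty_c(G,\Omega^{1/2})$, while the Corollary tells us that $P$ is adjointable precisely when $k_P\in\cD'_{r,s}(G,\Omega^{1/2})$. The crux of the proposition is therefore the single identity $\widetilde{P}(\delta)=k_P$: once one knows that any continuous extension must send the convolution unit to the kernel of $P$, the equivalence becomes a direct translation of the adjointability criterion.

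For the direct implication I would start from adjointability: the Corollary gives $k_P\in\cD'_{r,s}(G,\Omega^{1/2})$, and compactness of the support upgrades this to $k_P\in\cE'_{r,s}(G,\Omega^{1/2})\subset\cE'_s(G,\Omega^{1/2})$. The map $v\mapsto k_P*v$ of \eqref{eq:cont-left-s-tranv-conv-without-cpct-supp} is then a continuous endomorphism $\widetilde{P}:=k_P*\cdot$ of $\cD'(G,\Omega^{1/2})$; it extends $P$ because it coincides with $k_P*\cdot=P$ on $C^\infty_c(G,\Omega^{1/2})$, and since $\delta$ is the convolution unit we get $\widetilde{P}(\delta)=k_P*\delta=k_P\in\cD'_{r,s}(G,\Omega^{1/2})$, as required.

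For the converse I would be given a continuous extension $\widetilde{P}:\cD'\to\cD'$ of $P$ with $\widetilde{P}(\delta)\in\cD'_{r,s}$, and would establish $\widetilde{P}(\delta)=k_P$ by approximation. First I would choose $\phi_n\in C^\infty_c(G,\Omega^{1/2})$ with $\phi_n\to\delta$ in $\cE'_r(G,\Omega^{1/2})$, possible since $\delta\in\cE'_r$ and $C^\infty_c$ is dense there, exactly as in the proof of Proposition~\ref{prop:G-op-equivalent-form}. As the inclusion $\cE'_r\hookrightarrow\cD'$ is continuous, one also has $\phi_n\to\delta$ in $\cD'$, so continuity of $\widetilde{P}$ gives $\widetilde{P}(\phi_n)\to\widetilde{P}(\delta)$ in $\cD'$; meanwhile $\widetilde{P}(\phi_n)=P(\phi_n)=k_P*\phi_n$, and the separate continuity of the convolution in \eqref{eq:convolution-restricted} for $\pi=r$ (with $k_P$ fixed on the left) yields $k_P*\phi_n\to k_P*\delta=k_P$ in $\cE'_r$, hence in $\cD'$. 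By uniqueness of limits in the Hausdorff space $\cD'$ I conclude $\widetilde{P}(\delta)=k_P$, so the hypothesis forces $k_P\in\cD'_{r,s}$ and $P$ is adjointable by the Corollary. The final assertion $\widetilde{P}=k_P*\cdot$ then follows in either case: once $k_P\in\cE'_{r,s}\subset\cE'_s$, the map $k_P*\cdot$ is continuous on $\cD'$ by \eqref{eq:cont-left-s-tranv-conv-without-cpct-supp}, it agrees with $\widetilde{P}$ on the dense subspace $C^\infty_c(G,\Omega^{1/2})$, and $\cD'$ is Hausdorff.

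The one genuinely delicate point, and the step I expect to require the most care, is the identity $\widetilde{P}(\delta)=k_P$, because it forces juggling three distinct topologies at once: the approximation of $\delta$ takes place in $\cE'_r$, the continuity of $\widetilde{P}$ lives on $\cD'$, and the convergence of $k_P*\phi_n$ is first obtained in $\cE'_r$. The argument only closes because convergence in $\cE'_r$ transfers to $\cD'$ through the continuous inclusion and because $C^\infty_c$ is dense in both spaces; everything else is bookkeeping with the correspondences of Theorem~\ref{thm:G-op-transversal-distrib} and the continuity statements of Theorem~\ref{thm:convolution-transversal-distribution-groupoid}.
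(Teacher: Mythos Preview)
Your proof is correct, and the overall strategy --- reduce everything to the identity $\widetilde{P}(\delta)=k_P$ and then invoke the Corollary to Theorem~\ref{thm:G-op-transversal-distrib} --- is exactly the paper's. The one genuine difference is in how that identity is obtained. The paper first proves that $\widetilde{P}$ is right $C^\infty_c(G,\Omega^{1/2})$-linear, by approximating an arbitrary $u\in\cD'$ with $u_n\in C^\infty_c$ and using $P(u_n*f)=P(u_n)*f$ together with continuity of $\widetilde{P}$ and of right convolution by $f\in C^\infty_c\subset\cE'_r$; specializing to $u=\delta$ then gives $k_P*f=\widetilde{P}(\delta)*f$ for all $f$, and the injectivity statement of Proposition~\ref{prop:r-transv-distrib-are-left-multipliers} forces $k_P=\widetilde{P}(\delta)$. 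You instead approximate $\delta$ directly by $\phi_n$ and match the two limits $\widetilde{P}(\phi_n)\to\widetilde{P}(\delta)$ and $k_P*\phi_n\to k_P$ in $\cD'$. Your route is a touch more direct for the identity itself; the paper's route yields the right-module linearity of $\widetilde{P}$ as an explicit byproduct. The final assertion $\widetilde{P}=k_P*\cdot$ is handled the same way in both (density of $C^\infty_c$ in $\cD'$ and continuity of $k_P*\cdot$ once $k_P\in\cE'_s$). Your caution about the inclusion $\cE'_r\hookrightarrow\cD'$ is well placed but harmless here: convergence in $\cE'_r$ gives $\langle u_n,f\rangle\to\langle u,f\rangle$ for every test function, hence weak and therefore strong convergence of the sequence in the Montel space $\cD'$.
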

\begin{proof}
 Let $u\in \cD'(G,\Omega^{1/2})$ and $(u_n)\subset C^\infty_c(G,\Omega^{1/2})$ a sequence converging to $u$ in $\cD'$. We have
\[
 \widetilde{P}(u*f)= \lim P(u_n*f) = \lim P(u_n)*f= \widetilde{P}(u)*f,\quad \forall f\in C^\infty_c(G,\Omega^{1/2}).
\]
Thus $\widetilde{P}$ is automatically $C^\infty_c(G,\Omega^{1/2})$-right linear. It follows that 
\[
 k_P * f = P(f) = P(\delta *f) = \widetilde{P}(\delta) *f ,\quad \forall f\in C^\infty_c(G,\Omega^{1/2})
\]
which proves that $k_P=\widetilde{P}(\delta)\in\cD_{r,s}'(G,\Omega^{1/2})$ and that $\widetilde{P}$ is given by left convolution with $k_P$.
\end{proof}

 \section{Convolution on groupoids and wave front sets  }\label{sec:5}
We now turn to some microlocal aspects of the convolution of distributions on groupoids.  
In view of Proposition \ref{prop:global-distri-to-family-submersion-case}, it is natural to call $r$-transversal any (conic) subset $W\subset T^*G\setminus 0$ such that $W\cap  \ker dr^\perp  = \emptyset$,  indeed in that case 
\begin{equation}
 \cD'_W(G,\Omega^{1/2})\subset\cD'_r(G,\Omega^{1/2}).
\end{equation}
 Similarly, $W$ is called $s$-transversal if $W\cap  \ker ds^\perp  = \emptyset$
and we call bi-transversal any set which is both $r$ and $s$-transversal. We then introduce 
\begin{equation}
 \cD'_{a}(G,\Omega^{1/2}) = \{ u\in \cD'(G,\Omega^{1/2})\ ;\ \WF{u} \text{ is bi-transversal}\}
\end{equation}
and  $\cE'_{a}=\cD'_{a}\cap\cE'$. We call them  {\sl admissible} distributions. From Proposition \ref{prop:global-distri-to-family-submersion-case},  we get
\begin{equation}
 \cD'_{a}(G,\Omega^{1/2})\subset \cD'_{r,s}(G,\Omega^{1/2}).
\end{equation}
\begin{exam}\rm{
Observe that $A^*G\setminus 0$ is  bi-transversal. Since $\Psi(G)= I(G,G^{(0)})\subset \cD'_{A^*G}(G)$  (see \cite{Monthubert})   we get  
\begin{equation}
  \Psi(G)\subset \cD'_{a}(G,\Omega^{1/2}). 
\end{equation}}
\end{exam}
Theorem \ref{thm:convolution-transversal-distribution-groupoid} and Proposition \ref{prop:r-transv-distrib-are-left-multipliers} can be reused in various ways for subspaces of distributions with transversal wave front sets. We only record the main one: the convolution product restricts to a bilinear map
\begin{equation}\label{eq:rough-convolution-adm-distrib}
 \cE'_{a}(G,\Omega^{1/2})\times \cE'_{a}(G,\Omega^{1/2}) \overset{*}{\longrightarrow}\cE'_{r,s}(G,\Omega^{1/2}), 
\end{equation}
and we strenghthen this result as follows, by using the cotangent groupoid structure of Coste-Dazord-Weinstein (see Appendix). 
\begin{thm}\label{thm:E_a-algebra} 
For any $u_1,u_2\in \cE'_{a}(G,\Omega^{1/2})$, we have $u_1*u_2\in \cE'_{a}(G,\Omega^{1/2})$ and 
\begin{equation}\label{eq:The-formula}
 \WF{u_1*u_2} \subset \WF{u_1}*\WF{u_2}
\end{equation}
where on the right, $*$ denotes the product of the symplectic groupoid $T^*G\rightrightarrows A^*G$.
In particular $(\cE'_{a}(G,\Omega^{1/2}),*)$ is a unital involutive subalgebra of $(\cE'_{r,s}(G,\Omega^{1/2}),*)$.
\end{thm}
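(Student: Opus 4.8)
The plan is to reduce the convolution to the two classical operations behind it --- restriction of an exterior tensor product and a proper push-forward --- and then run H\"ormander's wave front calculus through the Coste--Dazord--Weinstein structure. First I would record that $u_1*u_2 = m_*(u_1\underset{s}{\times}u_2)$ lands in $\cE'(G,\Omega^{1/2})$ by Theorem \ref{thm:convolution-transversal-distribution-groupoid}, the compact supports of $u_1,u_2$ making $m$ proper on the support of the fibered product. The key preliminary is to identify this fibered product with the restriction to $G^{(2)}$ of the tensor product $u_1\otimes u_2\in\cD'(G\times G,\Omega^{1/2})$. Writing $G^{(2)}$ as the zero set of $s\circ\pr{1}-r\circ\pr{2}$, its conormal bundle at $(\gamma_1,\gamma_2)$ consists of the covectors $(\transpose ds_{\gamma_1}\lambda,-\transpose dr_{\gamma_2}\lambda)$, whose two components lie in $\ker ds^\perp$ and $\ker dr^\perp$ respectively. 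Since $u_1,u_2$ are admissible, $\WF{u_1\otimes u_2}$ meets neither $\ker ds^\perp$ in the first factor nor $\ker dr^\perp$ in the second, so $\WF{u_1\otimes u_2}\cap N^*(G^{(2)})=\emptyset$; the restriction is therefore well defined (\cite[Thm 8.2.4]{Horm-1}), and a density argument (both operations agree on $C^\infty_c$ and are continuous in the relevant topologies) shows it equals $u_1\underset{s}{\times}u_2$.

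With this in hand I would push the restriction estimate $\WF{(u_1\otimes u_2)|_{G^{(2)}}}\subset\{(\gamma_1,\gamma_2,(\zeta_1,\zeta_2)|_{TG^{(2)}})\}$, with $(\gamma_i,\zeta_i)\in\WF{u_i}\cup 0$, through the push-forward formula (3.6) of \cite{GuilleStenb1977} already used in Proposition \ref{prop:global-distri-to-family-submersion-case}. This yields that $(\gamma,\xi)\in\WF{u_1*u_2}$ forces the existence of $(\gamma_1,\gamma_2)\in m^{-1}(\gamma)$ and $(\gamma_i,\zeta_i)\in\WF{u_i}\cup 0$ with $\transpose dm_{(\gamma_1,\gamma_2)}(\xi)=(\zeta_1,\zeta_2)|_{TG^{(2)}}$. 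The latter says exactly that $(\zeta_1,\zeta_2,-\xi)$ lies in the conormal bundle of the graph of $m$, which by the results recalled in Section \ref{sec:7} is the defining relation of the Coste--Dazord--Weinstein multiplication: it entails the composability $s_\Gamma(\zeta_1)=r_\Gamma(\zeta_2)$ together with $\xi=\zeta_1*\zeta_2$.

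The heart of the argument --- and the point where admissibility is indispensable --- is to discard the degenerate contributions coming from the zero section. If $\zeta_2=0$ then $r_\Gamma(\zeta_2)=0$, so composability forces $s_\Gamma(\zeta_1)=0$, i.e. $\zeta_1\in\ker s_\Gamma$; since $\ker s_\Gamma$ and $\ker r_\Gamma$ are exactly the two bi-transversality loci $\ker ds^\perp$ and $\ker dr^\perp$ (Section \ref{sec:7}), admissibility of $u_1$ gives $\zeta_1\notin\WF{u_1}$, whence $\zeta_1=0$ and, $dm$ being onto, $\xi=0$; the case $\zeta_1=0$ is symmetric, using the admissibility of $u_2$. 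Consequently every $(\gamma,\xi)\in\WF{u_1*u_2}$ with $\xi\neq 0$ arises from genuine $\zeta_i\in\WF{u_i}$ with $\xi=\zeta_1*\zeta_2$, which is precisely \eqref{eq:The-formula}.

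Finally, bi-transversality of the product set is immediate from the groupoid identities $r_\Gamma(\zeta_1*\zeta_2)=r_\Gamma(\zeta_1)$ and $s_\Gamma(\zeta_1*\zeta_2)=s_\Gamma(\zeta_2)$: both are nonzero by admissibility of $u_1$ and $u_2$, so $\WF{u_1}*\WF{u_2}$, and a fortiori $\WF{u_1*u_2}$, avoids $\ker s_\Gamma\cup\ker r_\Gamma=\ker ds^\perp\cup\ker dr^\perp$; with the compact support this gives $u_1*u_2\in\cE'_{a}(G,\Omega^{1/2})$. To close the subalgebra claim I would note that $\delta$ is admissible since $\WF{\delta}\subset A^*G\setminus 0$ is bi-transversal, and that the involution $u\mapsto\overline{i^*u}$ preserves $\cE'_{a}(G,\Omega^{1/2})$ because $i$ exchanges $s$ and $r$, hence $\ker ds^\perp$ and $\ker dr^\perp$. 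The main obstacle I anticipate is the bookkeeping of the zero-section terms in the second and third steps: arranging the transversality $\WF{u_1\otimes u_2}\cap N^*(G^{(2)})=\emptyset$ and the elimination of the degenerate composable pairs so that both rest cleanly on the single hypothesis of bi-transversality rather than on ad hoc frequency estimates.
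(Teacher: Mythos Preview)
Your proof is correct and follows essentially the same route as the paper: identify $u_1*u_2$ with $m_*\rho^*(u_1\otimes u_2)$ where $\rho:G^{(2)}\hookrightarrow G^2$, verify the transversality condition $\WF{u_1\otimes u_2}\cap N^*G^{(2)}=\emptyset$, run the restriction and push-forward wave front estimates, and recognise the resulting relation $\transpose dm(\xi)=(\zeta_1,\zeta_2)|_{TG^{(2)}}$ as the Coste--Dazord--Weinstein product, discarding the zero-section contributions by admissibility. The only presentational difference is that the paper phrases the conormal check entirely through the identification $N^*G^{(2)}=\ker m_\Gamma$ and the groupoid identities (e.g.\ $r_\Gamma(\delta_1)=r_\Gamma(\delta_1\delta_2)=0$ forces $\delta_1\in\ker r_\Gamma=(\ker ds)^\perp$), whereas you compute $N^*G^{(2)}$ directly as $\{(\transpose ds\,\lambda,-\transpose dr\,\lambda)\}$; both lead to the same contradiction with bi-transversality. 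One small wording issue: your sentence ``$\WF{u_1\otimes u_2}$ meets neither $\ker ds^\perp$ in the first factor nor $\ker dr^\perp$ in the second'' is not literally true because of the $W_1\times 0$ and $0\times W_2$ terms --- what makes the argument go through is that a \emph{nonzero} conormal covector $(\transpose ds\,\lambda,-\transpose dr\,\lambda)$ has both components nonzero (since $ds,dr$ are submersive), hence must come from $W_1\times W_2$, which is where admissibility bites; this is exactly the bookkeeping you anticipated.
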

\begin{proof}
 Let $u_j\in \cE'_{a}(G,\Omega^{1/2})$ and set $W_j=\WF{u_j}$, $j=1,2$. We first show that the fibered product 
$u_1\underset{\pi}{\times}u_2$ (where $\pi=r,s$ indifferently) given by Proposition \ref{lem:transv-heredity-fibered-product-case}, coincides with the distribution obtained by the functorial operations  in \cite[Theorems 8.2.9, 8.2.4]{Horm-1}:
\begin{equation}
 \label{eq:bridge-transversal-functorial-WF}
 u_1\underset{\pi}{\times}u_2 = \rho^*(u_1\otimes u_2) \in\cD'(G^{(2)},\Omega(\ker dm)\otimes m^*(\Omega^{1/2})),
\end{equation}
where $\rho:G^{(2)}\hookrightarrow G^2$. By \cite[Theorem 8.2.9]{Horm-1}), we know that 
\begin{equation}\label{eq:WF-tensor-product-1}
 \WF{u_1\otimes u_2}\subset W_1\times W_2\cup W_1\times (G\times\{0\} )\cup (G\times\{0\})\times W_2,
\end{equation}
and to apply \cite[Theorems 8.2.4]{Horm-1}, we just need to check that 
\begin{equation}\label{eq:WF-restriction-to-G2-1}
 \WF{u_1\otimes u_2}\cap N^*G^{(2)} = \emptyset. 
\end{equation}
Observe that  $N^*G^{(2)}=\ker m_\Gamma\subset \Gamma^{(2)}$ and $\ker ds^\perp=\ker r_\Gamma$. Thus, if 
\[
 \delta_j=(\gamma_j,\xi_j)\in T^*_{\gamma_j}G\text{ and } (\delta_1,\delta_2)\in \WF{u_1\otimes u_2}\cap N^*G^{(2)} 
\]
then $(\delta_1,\delta_2)\in \Gamma^{(2)}$ and 
\begin{equation}
 r_\Gamma(\delta_1)=r_\Gamma(\delta_1\delta_2) = (r(\gamma_1),0). 
\end{equation}
By the $s$-transversality assumption on $W_1$ and the relation \eqref{eq:WF-tensor-product-1}, this implies $\delta_1=(\gamma_1,0)$ and $\delta_2\in W_2$. On the other hand
\begin{equation}
 s_\Gamma(\delta_2)=s_\Gamma(\delta_1\delta_2) = (s(\gamma_2),0),
\end{equation}
which contradicts the $r$-transversality of $W_2$, and this proves \eqref{eq:WF-restriction-to-G2-1}. Therefore, the right hand side in \eqref{eq:bridge-transversal-functorial-WF} is well defined by \cite[Theorems 8.2.4]{Horm-1} and it coincides with the left hand side, which is obvious  after pairing with test functions.  Now 
\begin{equation}\label{eq:convolution-via-Hormander-method}
 u_1*u_2 = m_*(u_1\underset{\pi}{\times}u_2)= m_*\rho^*(u_1\otimes u_2)
\end{equation}
and thus, using   \cite[Theorems 8.2.4]{Horm-1}  and \cite[(3.6), p. 328]{GuilleStenb1977},
\begin{equation}\label{eq:rough-formula-WF-convolution}
 \WF{u_1*u_2} \subset m_*\rho^*\WF{u_1\otimes u_2}.
\end{equation}
Here $\rho^*: T^*G^2 \longrightarrow T^*G^{(2)}$ is the restriction of linear forms and, for any $\widetilde{W}\subset T^*G^{(2)}$,
\[
 m_*(\widetilde{W}) = \{ (\gamma,\xi)\in T^*G \ ;\ \exists (\gamma_1,\gamma_2)\in m^{-1}(\gamma),\ (\gamma_1,\gamma_2,{}^tdm_{\gamma_1,\gamma_2}(\xi))\in \widetilde{W}\cup G^{(2)}\times 0\}.
\]
Since $m$ is submersive, ${}^tdm_{\gamma_1,\gamma_2}$ is injective and the term $G^{(2)}\times 0$ can be removed.  By definition  of the multiplication of $\Gamma=T^*G$, we get, for any $W\subset T^*G^2$, the equivalence  
\begin{equation}
\gamma_1\gamma_2=\gamma \text{ and } (\gamma_1,\gamma_2,{}^tdm_{\gamma_1,\gamma_2}(\xi))\in  \rho^*(W)
\ \Leftrightarrow 
\exists (\delta_1,\delta_2)\in \Gamma^{(2)}\cap W, \  \delta_1\delta_2=(\gamma,\xi).
\end{equation}
Thus, 
\begin{equation}\label{eq:rough-formula-WF-convolution-2}
 m_*\rho^*W =m_\Gamma(W\cap \Gamma^{(2)}).
\end{equation}
By $r$-transversality of $\WF{u_1}$, we have $s_\Gamma(\WF{u_1})\subset A^*G\setminus 0$, so
\(
 \WF{u_1}\times (G\times\{0\}) \cap\Gamma^{(2)}=\emptyset.
\)
Similarly, $s$-transversality of $\WF{u_2}$ gives $(G\times\{0\}) \times  \WF{u_2}  \cap\Gamma^{(2)}=\emptyset$. It follows that 
 $\WF{u_1\otimes u_2}\cap\Gamma^{(2)}=(\WF{u_1}\times \WF{u_2})\cap\Gamma^{(2)}$ and therefore
\[
 m_\Gamma(\WF{u_1\otimes u_2}\cap\Gamma^{(2)})= m_\Gamma((\WF{u_1}\times \WF{u_2})\cap\Gamma^{(2)})=\WF{u_1}* \WF{u_2}
\]
which proves \eqref{eq:The-formula}. Clearly, $W_1*W_2$ is $s$ or $r$-transversal  if the same holds respectively for  $W_1$ and $W_2$, so \eqref{eq:The-formula} implies $u_1*u_2\in\cE'_a$, therefore $\cE'_a$ is a subalgebra of $\cE'_{r,s}$.

Finally, since $\WF{\delta}=A^*G\setminus 0$, we have  $\delta\in\cE'_{a}$ and since $\WF{u^\star}=i_\Gamma(\WF{u})$, we conclude that $\cE'_a$ is unital and involutive. 
\end{proof}
 Looking at the proof of the Theorem, we see that the assumptions on $\WF{u_j}$ can be significanlty relaxed in order to conserve the property  \eqref{eq:WF-restriction-to-G2-1} and then  to be able to define the convolution product $u_1*u_2$ by the right hand side of \eqref{eq:convolution-via-Hormander-method}. 

Firstly, if $W\subset T^*G\setminus 0$, then $W\times (G\times 0) \cap\ker m_\Gamma=\emptyset$. Indeed, if $(\gamma_1,\xi_1,\gamma_2,0)\in W\times(G\times\{0\} )\cap \Gamma^{(2)}$, we can choose $t_1\in T_{\gamma_1} G$ such that $\xi_1(t_1)\not=0$ since $\xi_1\not=0$ by assumption.  Using a local section $\beta$ of $r$ such that $\beta(s(\gamma_1))=\gamma_2$ and setting $t_2=d\beta ds(t_1)\in T_{\gamma_2} G$, we  get 
$(t_1,t_2)\in T_{(\gamma_1,\gamma_2)}G^{(2)}$ and $\xi_1(t_1)+0(t_2)\not=0$, that is $\xi_1\oplus 0\not=0$ which proves that $(\gamma_1,\xi_1,\gamma_2,0)\not\in\ker m_\Gamma$. 

Arguing identically on $(G\times 0)\times  W$  we get the equivalence, for any  distributions $u_1, u_2$ 
\begin{equation}
 \WF{u_1\otimes u_2}\cap \ker m_\Gamma = \emptyset \Leftrightarrow  \WF{u_1}\times\WF{u_2}\cap \ker m_\Gamma = \emptyset .
\end{equation}
This is again the  condition \eqref{eq:WF-restriction-to-G2-1} which is sufficient to define $\rho^*(u_1\otimes u_2)=u_1\otimes u_2\vert_{G^{(2)}}$ and there the convolution product under additional suitable supports conditions. 
\begin{thm}\label{thm:convol-distrib-on-gpd}
 Let $W_j\subset T^*G\setminus 0$ be closed cones such that 
\begin{equation}\label{eq:our-cond-conv-distribution} 
 W_1\times W_2\cap \ker m_\Gamma = \emptyset
\end{equation}
and set $W_1\overline{*} W_2=m_\Gamma((W_1\times W_2 \cup W_1\times 0 \cup 0\times W_2 )\cap\Gamma^{(2)})$. Then the map 
\begin{align}
 \cE_{W_1}'(G,\Omega^{1/2})\times \cE_{W_2}'(G,\Omega^{1/2})& \overset{*}{\longrightarrow}\cE'_{W_1\overline{*} W_2}(G,\Omega^{1/2})\\
 (u_1,u_2) &\longmapsto m_*(u_1\otimes u_2\vert_{G^{(2)}})
\end{align} 
is separately sequentially continuous and coincides with the convolution product on $C^\infty_c(G,\Omega^{1/2})$. 
\end{thm}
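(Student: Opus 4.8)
The plan is to realize $u_1 * u_2$ exactly as in the proof of Theorem~\ref{thm:E_a-algebra}, as the composite of three functorial operations on distributions --- exterior tensor product, restriction to $G^{(2)}$, and push-forward by $m$ --- but now controlling everything through the hypothesis \eqref{eq:our-cond-conv-distribution} rather than through bi-transversality. First I would settle well-definedness. Since $N^*G^{(2)} = \ker m_\Gamma$, the equivalence recorded just before the statement shows that \eqref{eq:our-cond-conv-distribution} is precisely the condition $\WF{u_1 \otimes u_2} \cap N^*G^{(2)} = \emptyset$: the two extra pieces $W_1 \times (G \times \{0\})$ and $(G \times \{0\}) \times W_2$ appearing in the bound \eqref{eq:WF-tensor-product-1} never meet $\ker m_\Gamma$ by the argument given there, so only the term $W_1 \times W_2$ matters, and that is ruled out by \eqref{eq:our-cond-conv-distribution}. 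Hence the restriction $\rho^*(u_1 \otimes u_2) = u_1 \otimes u_2\vert_{G^{(2)}}$ is defined by \cite[Theorem 8.2.4]{Horm-1}, and, $u_1,u_2$ being compactly supported, $m_*$ may be applied to it to produce a compactly supported distribution $u_1 * u_2 \in \cE'(G,\Omega^{1/2})$.

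For the wave front bound I would repeat the purely set-theoretic computation \eqref{eq:rough-formula-WF-convolution}--\eqref{eq:rough-formula-WF-convolution-2} of Theorem~\ref{thm:E_a-algebra}, which is legitimate once the restriction exists. Combining \cite[Theorem 8.2.4]{Horm-1} with the push-forward estimate \cite[(3.6), p.~328]{GuilleStenb1977} gives $\WF{u_1 * u_2} \subset m_* \rho^* \WF{u_1 \otimes u_2} = m_\Gamma(\WF{u_1 \otimes u_2} \cap \Gamma^{(2)})$, and feeding in the full bound \eqref{eq:WF-tensor-product-1} yields
\[
  \WF{u_1 * u_2} \subset m_\Gamma\big( (W_1 \times W_2 \cup W_1 \times 0 \cup 0 \times W_2) \cap \Gamma^{(2)} \big) = W_1 \overline{*} W_2 .
\]
Note that here, unlike in Theorem~\ref{thm:E_a-algebra}, the transversality of the $W_j$ is no longer available to discard the terms $W_1 \times 0$ and $0 \times W_2$, which is exactly why the definition of $W_1 \overline{*} W_2$ retains them; this shows the map lands in $\cE'_{W_1 \overline{*} W_2}(G,\Omega^{1/2})$.

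It remains to prove separate sequential continuity and the coincidence on $C^\infty_c$. The coincidence is immediate: for $f_1, f_2 \in C^\infty_c(G,\Omega^{1/2})$ the section $f_1 \otimes f_2\vert_{G^{(2)}}$ is smooth, and $m_*$ integrates it over the fibers $G^{(2)}_\gamma = m^{-1}(\gamma)$, which by Lemma~\ref{lem:manip-demi-densite-G} reproduces the convolution integral \eqref{eq:basic_formula_convolution_functions}. For continuity I would fix, say, $u_2 \in \cE'_{W_2}(G,\Omega^{1/2})$ and view $u_1 \mapsto u_1 * u_2$ as the composite $u_1 \mapsto u_1 \otimes u_2 \mapsto \rho^*(u_1 \otimes u_2) \mapsto m_*\rho^*(u_1 \otimes u_2)$, each arrow being sequentially continuous for the Hörmander (normal) topologies on the relevant spaces $\cD'_\Gamma$ of distributions with wave front set in a fixed closed cone: the tensor product by \cite[Theorem 8.2.9]{Horm-1}, the restriction $\rho^*$ by the continuity part of \cite[Theorem 8.2.4]{Horm-1}, and the fibered push-forward $m_*$ by the sequential continuity of integration along the fibers of a submersion, with wave front sets controlled by \cite[(3.6), p.~328]{GuilleStenb1977}. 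The hard part will be the bookkeeping here: one must check that along a convergent sequence $u_1^{(n)} \to u_1$ in $\cE'_{W_1}(G,\Omega^{1/2})$ all the intermediate wave front sets stay inside the \emph{fixed} cones determined by $W_1, W_2$, so that a single $\cD'_\Gamma$ topology governs each step, and that the non-properness of $m$ is harmless because the supports remain in a fixed compact set. Granting this, the composite is sequentially continuous in $u_1$, and symmetrically in $u_2$, which completes the proof.
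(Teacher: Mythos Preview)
Your proposal is correct and follows essentially the same route as the paper: realize $u_1*u_2$ as $m_*\rho^*(u_1\otimes u_2)$, invoke \cite[Theorems 8.2.4, 8.2.9]{Horm-1} (and \cite{BDH} for the normal topology) for well-definedness and separate sequential continuity of the tensor--restrict step, restrict to compact supports to make $m_*$ available, and reuse the wave front computation \eqref{eq:rough-formula-WF-convolution}--\eqref{eq:rough-formula-WF-convolution-2} with the full set $W_1\bar{\times}W_2$ in place of $W_1\times W_2$. Your additional remarks on why the terms $W_1\times 0$ and $0\times W_2$ must now be kept, and on the coincidence with \eqref{eq:basic_formula_convolution_functions} via Lemma~\ref{lem:manip-demi-densite-G}, are correct elaborations that the paper leaves implicit.
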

\begin{proof}
Under the assumption made on $W_1,W_2$, we can apply \cite[Theorems 8.2.4, 8.2.9]{Horm-1} to find  that the bilinear map
\begin{align}
\cD'_{W_1}(G,\Omega^{1/2})\times \cD'_{W_2}(G,\Omega^{1/2}) &\longrightarrow \cD'_{\rho^*(W_1\bar{\times}W_2)}(G^{(2)},\Omega^{1/2}) \\
 (u_1,u_2) & \longmapsto  u_1\otimes u_2\vert_{G^{(2)}}\nonumber
\end{align}
is well defined, sequentially separately continuous for the natural notion of convergence of sequences in the spaces $\cD'_W$ \cite{Horm-1,GuilleStenb1977}, and also separately continuous for the normal topology of these spaces \cite{BDH}. Above, we have set for convenience \( W_1\bar{\times}W_2 = W_1\times W_2 \cup W_1\times 0 \cup 0\times W_2 \).

To apply $m_*$ and get a continuous map for the same topologies, we restrict ourselves to compactly supported distributions and we get 
\begin{equation}
\cE'_{W_1}(G,\Omega^{1/2})\times \cE'_{W_2}(G,\Omega^{1/2})\overset{(\cdot\otimes\cdot)\vert_{G^{(2)}}}{\longrightarrow}\cE'_{\rho^*(W_1\bar{\times}W_2)}(G^{(2)},\Omega^{1/2})\overset{m_*}{\longrightarrow}\cE'_{W_1\overline{*}W_2}(G,\Omega^{1/2}).
\end{equation}
Indeed, the formulas \eqref{eq:rough-formula-WF-convolution} and \eqref{eq:rough-formula-WF-convolution-2} are still valid here and give the last distribution space above. 
\end{proof}
If $u_1$ or $u_2$ is smooth then $\WF{u_1} \times \WF{u_2}$ is empty and \eqref{eq:our-cond-conv-distribution} is trivially satisfied, thus
\begin{cor}
 The convolution product of Theorem \ref{thm:convol-distrib-on-gpd} gives by restriction the maps 
\begin{equation}
 \cE'(G,\Omega^{1/2})\times C^\infty_c(G,\Omega^{1/2})\overset{*}{\longrightarrow} \cE'_{ s^{-1}_\Gamma(0)}(G,\Omega^{1/2}),
\end{equation}
\begin{equation}
   C^\infty_c(G,\Omega^{1/2})\times \cE'(G,\Omega^{1/2}) \overset{*}{\longrightarrow} \cE'_{r^{-1}_\Gamma(0)}(G,\Omega^{1/2}).
\end{equation}
\end{cor}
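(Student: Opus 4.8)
The plan is to obtain both maps as direct specializations of Theorem \ref{thm:convol-distrib-on-gpd}, feeding in an empty wave front set on the smooth factor and then identifying the output cone $W_1\overline{*}W_2$ with the advertised fibre. As already noted just before the statement, if one of the factors is smooth then the product of the two wave front sets is empty, so the hypothesis \eqref{eq:our-cond-conv-distribution} holds automatically and Theorem \ref{thm:convol-distrib-on-gpd} supplies both the map and its separate sequential continuity together with the agreement with the convolution on $C^\infty_c(G,\Omega^{1/2})$; the only thing left to check is where the resulting wave front set lives.

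For the first map I would set $W_1=\WF{u_1}$ for $u_1\in\cE'(G,\Omega^{1/2})$ and $W_2=\WF{f}=\emptyset$ for $f\in C^\infty_c(G,\Omega^{1/2})$. Since $W_2=\emptyset$, the union $W_1\times W_2\cup W_1\times 0\cup 0\times W_2$ defining $W_1\overline{*}W_2$ collapses to $W_1\times(G\times 0)$, so that $W_1\overline{*}\emptyset=m_\Gamma\big((W_1\times(G\times 0))\cap\Gamma^{(2)}\big)$. Given a composable pair $(\delta_1,\delta_2)\in\Gamma^{(2)}$ with $\delta_1\in W_1$ and $\delta_2=(\gamma_2,0)$, the groupoid identity $s_\Gamma(\delta_1\delta_2)=s_\Gamma(\delta_2)$ together with the fibrewise linearity of the source map of $T^*G$ (so that $s_\Gamma(\gamma_2,0)=0$, see the Appendix) gives $s_\Gamma(\delta_1\delta_2)=0$. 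Hence $W_1\overline{*}\emptyset\subset s_\Gamma^{-1}(0)$, which is exactly the target of the first map. The second map is the mirror image: taking $W_1=\emptyset$ and $W_2=\WF{u_2}$ one gets $\emptyset\overline{*}W_2=m_\Gamma\big(((G\times 0)\times W_2)\cap\Gamma^{(2)}\big)$, and the identity $r_\Gamma(\delta_1\delta_2)=r_\Gamma(\delta_1)=r_\Gamma(\gamma_1,0)=0$ now places the result in $r_\Gamma^{-1}(0)$. One could alternatively deduce the second map from the first via the involution, since $\WF{u^\star}=i_\Gamma(\WF{u})$ and $i_\Gamma$ interchanges $s_\Gamma$ and $r_\Gamma$.

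I do not anticipate a genuine obstacle here: the whole statement reduces to the single structural fact that $s_\Gamma$ and $r_\Gamma$ send zero covectors to the zero section, which is immediate from their fibrewise linearity, the rest being a routine reading of the cone $W_1\overline{*}W_2$ from Theorem \ref{thm:convol-distrib-on-gpd}. The mildest point of care is to remember that wave front sets are taken in $T^*G\setminus 0$, so that the zero-section contributions arising in $m_\Gamma$ are silently discarded and the inclusions land in $s_\Gamma^{-1}(0)$, respectively $r_\Gamma^{-1}(0)$, as stated.
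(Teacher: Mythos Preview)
Your argument is correct and matches the paper's approach: the paper states this corollary as an immediate consequence of Theorem \ref{thm:convol-distrib-on-gpd}, offering only the one-line observation that a smooth factor makes $\WF{u_1}\times\WF{u_2}$ empty and hence \eqref{eq:our-cond-conv-distribution} is trivially satisfied. You have filled in the one detail the paper leaves implicit, namely the identification of $W_1\overline{*}\emptyset\subset s_\Gamma^{-1}(0)$ (resp.\ $\emptyset\overline{*}W_2\subset r_\Gamma^{-1}(0)$) via the groupoid relations $s_\Gamma\circ m_\Gamma=s_\Gamma\circ\pr{2}$ and $r_\Gamma\circ m_\Gamma=r_\Gamma\circ\pr{1}$ together with the fibrewise linearity of $s_\Gamma,r_\Gamma$ recorded in the Appendix.
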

As we said, bi-transversal subsets of $T^*G\setminus 0$ satisfy \eqref{eq:our-cond-conv-distribution}. Actually,
\begin{cor}\label{cor:heredity-admissibility}
 Let  $W_1,W_2$ be any subsets of $T^*G\setminus 0$. If $W_1$ is $s$-transversal (resp. $W_2$ is $r$-transversal) then the assumption 
  \eqref{eq:our-cond-conv-distribution} is satisfied and  $W_1* W_2$  is $s$-transversal (resp. $W_2$ $r$-transversal) .
\end{cor}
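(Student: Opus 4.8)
The plan is to reduce the statement to two elementary facts about the Coste--Dazord--Weinstein groupoid $\Gamma=T^*G$: the groupoid identities $r_\Gamma(\delta_1\delta_2)=r_\Gamma(\delta_1)$ and $s_\Gamma(\delta_1\delta_2)=s_\Gamma(\delta_2)$ for any composable pair $(\delta_1,\delta_2)\in\Gamma^{(2)}$, together with the identifications $\ker ds^\perp=\ker r_\Gamma$ and $\ker dr^\perp=\ker s_\Gamma$ already used in the proof of Theorem \ref{thm:E_a-algebra}. I treat only the hypothesis that $W_1$ is $s$-transversal; the case where $W_2$ is $r$-transversal is obtained by the symmetric argument, exchanging the roles of $r_\Gamma$ and $s_\Gamma$ and of the two factors. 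Throughout I write $\delta_j=(\gamma_j,\xi_j)\in T^*_{\gamma_j}G$.

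First I would verify assumption \eqref{eq:our-cond-conv-distribution}. Assume for contradiction that some $(\delta_1,\delta_2)$ lies in $(W_1\times W_2)\cap\ker m_\Gamma$. Since $\ker m_\Gamma=N^*G^{(2)}$, the computation carried out in the proof of Theorem \ref{thm:E_a-algebra} applies verbatim and yields $(\delta_1,\delta_2)\in\Gamma^{(2)}$ together with $r_\Gamma(\delta_1)=r_\Gamma(\delta_1\delta_2)=(r(\gamma_1),0)$, that is $\delta_1\in\ker r_\Gamma=\ker ds^\perp$. But $\delta_1\in W_1$ and $W_1$ is $s$-transversal, so $W_1\cap\ker ds^\perp=\emptyset$, a contradiction. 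Hence $(W_1\times W_2)\cap\ker m_\Gamma=\emptyset$, which is exactly \eqref{eq:our-cond-conv-distribution}; in particular Theorem \ref{thm:convol-distrib-on-gpd} applies and the convolution product is defined.

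Next I would show that $W_1*W_2=m_\Gamma\big((W_1\times W_2)\cap\Gamma^{(2)}\big)$ is $s$-transversal. Let $\delta\in W_1*W_2$, say $\delta=\delta_1\delta_2$ with $(\delta_1,\delta_2)\in(W_1\times W_2)\cap\Gamma^{(2)}$. The groupoid identity gives $r_\Gamma(\delta)=r_\Gamma(\delta_1)$. Since $\delta_1\in W_1$ and $W_1$ is $s$-transversal, $\delta_1\notin\ker ds^\perp=\ker r_\Gamma$, hence $r_\Gamma(\delta)=r_\Gamma(\delta_1)\neq 0$. In particular $\delta\neq 0$, so $\delta\in T^*G\setminus 0$, and $\delta\notin\ker r_\Gamma=\ker ds^\perp$. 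As $\delta$ was arbitrary this gives $(W_1*W_2)\cap\ker ds^\perp=\emptyset$, i.e. $W_1*W_2$ is $s$-transversal. The corresponding assertion for $W_2$ $r$-transversal follows identically, using $s_\Gamma(\delta)=s_\Gamma(\delta_2)$ in place of $r_\Gamma(\delta)=r_\Gamma(\delta_1)$.

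I do not expect a genuine obstacle: the entire content is that the target of a product equals the target of its left factor, read through the dictionary between $s$-transversality (resp. $r$-transversality) and the vanishing of $r_\Gamma$ (resp. $s_\Gamma$). The only point deserving a moment's care is extracting $r_\Gamma(\delta_1)=(r(\gamma_1),0)$ for a pair in $\ker m_\Gamma$ from the conormal description $N^*G^{(2)}$, but this is precisely the identity already displayed in the proof of Theorem \ref{thm:E_a-algebra}, so nothing new is required.
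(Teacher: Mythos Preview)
Your argument is correct and is exactly the approach the paper takes: the paper's entire proof reads ``Use the equalities $s_\Gamma\circ m_\Gamma = s_\Gamma\circ \pr{2}$ and $r_\Gamma\circ m_\Gamma = r_\Gamma\circ \pr{1}$,'' and you have simply unpacked what these groupoid identities say on $\ker m_\Gamma$ (for condition \eqref{eq:our-cond-conv-distribution}) and on $(W_1\times W_2)\cap\Gamma^{(2)}$ (for the transversality of $W_1*W_2$). Your invocation of the computation $r_\Gamma(\delta_1)=r_\Gamma(\delta_1\delta_2)=(r(\gamma_1),0)$ from the proof of Theorem~\ref{thm:E_a-algebra} is the same step, written out rather than cited.
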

\begin{proof}
Use the equalities $s_\Gamma\circ m_\Gamma =   s_\Gamma\circ \pr{2}$ and $r_\Gamma\circ m_\Gamma =   r_\Gamma\circ \pr{1}$.
\end{proof}
\begin{rmk}
\rm{Theorems \ref{thm:convolution-transversal-distribution-groupoid}  and  \ref{thm:convol-distrib-on-gpd} do not apply exactly to the same situations. For instance, consider the pair groupoid $G=\mathbb{R}\times \mathbb{R}$. On one hand, using the relation $\ker m_\Gamma=((\ker ds)^\perp\times(\ker dr)^\perp)\cap (T^*G)^{(2)}$ and Remark  \ref{rmk:transv-subm-not-imply-transv-wf}, it is easy to obtain pairs of distributions $(u_1,u_2)\in \cE'_s(\RR^2)\times \cE'(\RR^2)$ for which only Theorem \ref{thm:convolution-transversal-distribution-groupoid} can be applied to define $u_1*u_2$. On the other hand, consider the distributions $u_1=\delta_{(0,0)}$ and $u_2=\delta_{(1,1)}$, whose wave fronts are respectively  $W_1=\{(0,0, \xi, \eta)\ ;\ (\xi, \eta) \neq (0,0)\}$ and $W_2=\{(1,1, \xi, \eta)\ ; \ (\xi, \eta) \neq (0,0)\}$. These distributions are neither $s$ nor $r$ transversal, but $W_1\times W_2 \cap \Gamma^{(2)} =\emptyset$, hence the convolution $u_1 *u_2$ on $G$ can only be defined by Theorem \ref{thm:convol-distrib-on-gpd} (note that $u_1*u_2=0$; less peculiar examples can be easily constructed). 

Of course, both convolution products coincide when both make sense, since  the equality \eqref{eq:bridge-transversal-functorial-WF}  is valid  as soon as   $(\WF{u_1}\times \WF{u_2}) \cap \ker m_\Gamma = \emptyset$.}
\end{rmk}

\section{Appendix : The cotangent groupoid of Coste-Dazord-Weinstein} \label{sec:7}

We recall the definition of the cotangent groupoid of Coste-Dazord-Weinstein. We explain the construction of the source and target map given in \cite{CDW} and we enlighten the role played by the differential of the multiplication map of $G$. This is a pedestrian approach based on concrete differential geometry while more conceptual developments can be found in \cite{Pradines1988,Mackenzie2005}. 

Let $G$ be a Lie groupoid whose multiplication is denoted by $m$, source and target by $s,r$ and inversion by $i$. Differentiating all the structure maps of $G$, we get that $TG\rightrightarrows T G^{(0)}$ is a Lie groupoid whose multiplication is given by $dm$, source and target by $ds,dr$ and inversion by $di$. Hence, it is natural to try to transpose everything to get a groupoid structure on $\Gamma= T^*G$. Following this idea, it is natural to decide that the product $(\gamma_1,\xi_1).(\gamma_2,\xi_2)\in T^*G$ of two elements $(\gamma_j,\xi_j)\in T^*G$ is defined by $(\gamma_1\gamma_2,\xi)$ where $\xi$ is the solution of the equation 
\begin{equation}\label{eq:charac-prod-of-cot-gpd}
  {}^tdm_{(\gamma_1,\gamma_2)}(\xi)=(\xi_1,\xi_2)\vert_{T_{(\gamma_1,\gamma_2)}G^{(2)}}.
\end{equation}
Indeed, $m : G^{(2)}\longrightarrow G$ being a submersion,   ${}^tdm_{(\gamma_1,\gamma_2)}$ is injective for all $(\gamma_1,\gamma_2)\in G^{(2)}$ and  $\xi$, when it exists, is therefore unique. In that case, we have 
\begin{equation}
 \xi = {}^tdm_{(\gamma_1,\gamma_2)}^{-1}\rho(\xi_1,\xi_2)
\end{equation}
where $\rho : T_{G^{(2)}}^*G^{2}\longrightarrow T^* G^{(2)}$ is the restriction of linear forms and we introduce the notations
\begin{equation}\label{eq:def_+_cercle}
 \xi=\xi_1\oplus \xi_2 \text{ and } m_\Gamma(\gamma_1,\xi_1,\gamma_2,\xi_2) = (\gamma_1\gamma_2,\xi_1\oplus \xi_2).
\end{equation}
The equation  \eqref{eq:charac-prod-of-cot-gpd}  has a solution $\xi$ if and only if 
\begin{equation}\label{eq:basic-condit-compo-in-Gamma}
 (\xi_1,\xi_2)\in \im {}^tdm_{(\gamma_1,\gamma_2)}.
\end{equation}
Since $\im {}^tdm_{(\gamma_1,\gamma_2)} = (\ker dm_{(\gamma_1,\gamma_2)})^\perp$,  this is equivalent to
\begin{equation}\label{eq:first-cond-composability-in-Gamma}
 \xi_1(t_1)+\xi_2(t_2)=0,\quad  \forall (t_1,t_2)\in \ker dm_{(\gamma_1,\gamma_2)}.
\end{equation}
Let us explicit $\ker dm \subset TG^{(2)}$. Let 
$$
L_\gamma : G^{s(\gamma)}\longrightarrow G^{r(\gamma)}, \gamma' \mapsto \gamma\gamma' \mbox{  and  } R_\gamma : G_{r(\gamma)}\longrightarrow G_{s(\gamma)}, \gamma' \mapsto \gamma'\gamma
$$
be the left and right multiplication maps of $G$. Let $(\gamma_1,\gamma_2)\in G^{(2)}$ and set $\gamma=\gamma_1\gamma_2$, $x=s(\gamma_1)$.  
Parametrizing 
$G^{(2)}_\gamma=m^{-1}(\gamma)$ by $G^{r(\gamma)}\ni \eta\mapsto (\eta,\eta^{-1}\gamma)$, we find, after a routine computation:
\begin{equation}\label{eq:kernel-dm}
 (t_1,t_2)\in \ker dm_{(\gamma_1,\gamma_2)}\Leftrightarrow t_1=dL_{\gamma_1}di(t),\ t_2=dR_{\gamma_2}(t),\text{ for some } t\in T_xG_x.
\end{equation}
It follows that \eqref{eq:basic-condit-compo-in-Gamma} is equivalent to the equality 
\begin{equation}\label{eq:source-target-Gamma-first-shot}
 {}^tdR_{\gamma_2}(\xi_2)   = -{}^td(L_{\gamma_1}\circ i)(\xi_1) \in (T_xG_x)^*,
\end{equation}
where it is understood that   $R_{\gamma_2}$ and $L_{\gamma_1}\circ i$ are differentiated at  $\gamma=x$ and that the linear forms $\xi_1,\xi_2$ are restricted to the ranges of the corresponding differential maps. The same abuse of notations is used below without further notice. 
We then define  elements $\overline{s}(\xi_1),\overline{r}(\xi_2)$ belonging to $A^*_xG=(T_xG/T_xG^{(0)})^*$ by 
\begin{equation}
 \overline{s}(\xi_1)(t+u)={}^tdL_{\gamma_1}(\xi_1)(t) \text{ for all } t+u \in T_x G^x\oplus T_xG^{(0)}=T_xG,
\end{equation}
\begin{equation}
 \overline{r}(\xi_2)(t+u)={}^tdR_{\gamma_2}(\xi_2) (t) \text{ for all } t+u \in T_x G_x\oplus T_xG^{(0)}=T_xG.
\end{equation}
 Differentiating the relation $\gamma^{-1}\gamma = s(\gamma)$ at $\gamma=x$ we get the relation
\begin{equation}
 di+\id =ds+dr
\end{equation}
which yields 
\(
 -di(t) \equiv t \mod T_xG^{(0)},\ \forall t\in T_xG.
\)
Thus,  \eqref{eq:source-target-Gamma-first-shot}, and then \eqref{eq:basic-condit-compo-in-Gamma}, is equivalent to 
\begin{equation}
 \overline{r}(\xi_2)=\overline{s}(\xi_1) \in A^*_xG.
\end{equation}
This leads to the definitions 
\begin{equation}
 s_\Gamma(\gamma,\xi)= (s(\gamma),\overline{s}(\xi))\in A^*G \text{ and }r_\Gamma(\gamma,\xi)= (r(\gamma),\overline{r}(\xi))\in A^*G,\quad \forall (\gamma,\xi)\in T^*G.
\end{equation}
Finally, we denote $u_\Gamma : A^*G\hookrightarrow T^*G$ the canonical inclusion and we set
\begin{equation}
 i_\Gamma(\gamma,\xi) = (\gamma^{-1},-({}^tdi_{\gamma})^{-1}(\xi)),\quad \forall (\gamma,\xi)\in T^*G.
\end{equation}
\begin{thm}\cite{CDW}.
Let $G$ be a Lie groupoid. The space $\Gamma=T^*G$  is a Lie groupoid with unit space $A^*G$ and structural maps given by  $s_\Gamma,r_\Gamma,m_\Gamma,i_\Gamma$ and $u_\Gamma$ (respectively, source, target, multiplication, inversion and inclusion of unit maps).
\end{thm}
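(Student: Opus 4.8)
The plan is to verify the five groupoid axioms by direct computation, using throughout the defining relation of the multiplication: $\xi_1\oplus\xi_2$ is the unique covector over $\gamma_1\gamma_2$ with
\[
 \scal{\xi_1\oplus\xi_2}{dm_{(\gamma_1,\gamma_2)}(t_1,t_2)}=\xi_1(t_1)+\xi_2(t_2),\qquad \forall\,(t_1,t_2)\in T_{(\gamma_1,\gamma_2)}G^{(2)}.
\]
Since $m$ is a submersion, $dm$ is surjective and ${}^tdm$ is injective with smoothly varying image, so this characterization both determines $\xi_1\oplus\xi_2$ and shows that $m_\Gamma$ is smooth; the maps $s_\Gamma,r_\Gamma,i_\Gamma,u_\Gamma$ are smooth directly from their formulas. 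To see that $s_\Gamma$ and $r_\Gamma$ are submersions, I would note that $s,r$ are submersions on the base while, for fixed $\gamma$, the isomorphisms $dL_\gamma$ and $dR_\gamma$ between the relevant fibers make $\xi\mapsto\overline{s}(\xi)$ and $\xi\mapsto\overline{r}(\xi)$ surjective onto $A^*_{s(\gamma)}G$ and $A^*_{r(\gamma)}G$; moreover, since translation by a unit is the identity, one checks $s_\Gamma\circ u_\Gamma=r_\Gamma\circ u_\Gamma=\id$, so $u_\Gamma$ is the unit inclusion.

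Next I would establish the compatibility of source and target with the product. From $r(\gamma_1\gamma_2)=r(\gamma_1)$ and $L_{\gamma_1\gamma_2}=L_{\gamma_1}\circ L_{\gamma_2}$, a short computation with the definition of $\overline{r}$ and the pairing identity above yields $r_\Gamma(m_\Gamma(\delta_1,\delta_2))=r_\Gamma(\delta_1)$, and symmetrically $s_\Gamma(m_\Gamma(\delta_1,\delta_2))=s_\Gamma(\delta_2)$ using $s(\gamma_1\gamma_2)=s(\gamma_2)$ and $R_{\gamma_1\gamma_2}=R_{\gamma_2}\circ R_{\gamma_1}$. In particular, whenever $\delta_1,\delta_2,\delta_3$ are composable, in the sense that $s_\Gamma(\delta_1)=r_\Gamma(\delta_2)$ and $s_\Gamma(\delta_2)=r_\Gamma(\delta_3)$, both iterated products are defined.

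The heart of the proof is associativity, which I expect to be the main obstacle, since $dm$ fails to be injective and one must track covectors consistently through composites. I would introduce the triple multiplication $\mu:G^{(3)}\to G$, $(\gamma_1,\gamma_2,\gamma_3)\mapsto\gamma_1\gamma_2\gamma_3$, a submersion that factors, by associativity of $m$, as $\mu=m\circ(m\times\id)=m\circ(\id\times m)$ with $m\times\id:(\gamma_1,\gamma_2,\gamma_3)\mapsto(\gamma_1\gamma_2,\gamma_3)$ and $\id\times m:(\gamma_1,\gamma_2,\gamma_3)\mapsto(\gamma_1,\gamma_2\gamma_3)$. For composable $\delta_j=(\gamma_j,\xi_j)$ I would then define $\xi_1\oplus\xi_2\oplus\xi_3$ as the unique covector over $\gamma_1\gamma_2\gamma_3$ with
\[
 \scal{\xi_1\oplus\xi_2\oplus\xi_3}{d\mu(t_1,t_2,t_3)}=\xi_1(t_1)+\xi_2(t_2)+\xi_3(t_3),\qquad\forall\,(t_1,t_2,t_3)\in T_{(\gamma_1,\gamma_2,\gamma_3)}G^{(3)},
\]
the two composability conditions being exactly what forces the right-hand side to annihilate $\ker d\mu_{(\gamma_1,\gamma_2,\gamma_3)}$. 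Transposing the two factorizations of $\mu$ and feeding in the defining pairing of $m_\Gamma$ twice, at $(\gamma_1\gamma_2,\gamma_3)$ and at $(\gamma_1,\gamma_2\gamma_3)$ respectively, I would check that both $(\xi_1\oplus\xi_2)\oplus\xi_3$ and $\xi_1\oplus(\xi_2\oplus\xi_3)$ satisfy the characterization of $\xi_1\oplus\xi_2\oplus\xi_3$; uniqueness then gives $(\delta_1\delta_2)\delta_3=\delta_1(\delta_2\delta_3)$.

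It remains to treat units and inverses. For the unit over $r(\gamma)$, namely $u_\Gamma(r_\Gamma(\delta))$, the product with $\delta$ lies over $r(\gamma)\gamma=\gamma$, and since left translation by the unit $r(\gamma)$ is the identity on $G^{r(\gamma)}$, the pairing identity returns $\xi$, giving $u_\Gamma(r_\Gamma(\delta))\cdot\delta=\delta$; dually $\delta\cdot u_\Gamma(s_\Gamma(\delta))=\delta$. For inverses, using $\gamma\gamma^{-1}=r(\gamma)$ and $i_\Gamma(\gamma,\xi)=(\gamma^{-1},-({}^tdi_\gamma)^{-1}(\xi))$, I would compute $m_\Gamma(\delta,i_\Gamma(\delta))$ from the pairing identity together with the relation $di+\id=ds+dr$ already obtained in the construction, verifying that the resulting covector annihilates $T_{r(\gamma)}G^{(0)}$ and equals $\overline{r}(\xi)$, i.e. $m_\Gamma(\delta,i_\Gamma(\delta))=u_\Gamma(r_\Gamma(\delta))$; the identity $i_\Gamma(\delta)\cdot\delta=u_\Gamma(s_\Gamma(\delta))$ follows symmetrically. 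Together with the smoothness already noted, this would complete the verification that $(\Gamma,s_\Gamma,r_\Gamma,m_\Gamma,i_\Gamma,u_\Gamma)$ is a Lie groupoid.
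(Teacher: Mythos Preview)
The paper does not actually prove this theorem: it is stated with a citation to \cite{CDW}, and the surrounding appendix only constructs the structure maps $s_\Gamma,r_\Gamma,m_\Gamma,i_\Gamma,u_\Gamma$ and derives the composability condition $\overline{s}(\xi_1)=\overline{r}(\xi_2)$, without checking the groupoid axioms. So there is no in-paper proof to compare your proposal against; you are supplying what the paper outsources.

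That said, your approach is correct and is the natural way to carry out the verification. The associativity argument via the triple map $\mu:G^{(3)}\to G$ is clean: since $\mu$ is a submersion, $d\mu$ is surjective, so the triple pairing identity determines at most one covector over $\gamma_1\gamma_2\gamma_3$, and transposing the two factorizations $\mu=m\circ(m\times\id)=m\circ(\id\times m)$ shows that both $(\xi_1\oplus\xi_2)\oplus\xi_3$ and $\xi_1\oplus(\xi_2\oplus\xi_3)$ satisfy it. You do not in fact need to check independently that the right-hand side annihilates $\ker d\mu$; the existence of either iterated product already guarantees that a solution exists, and uniqueness does the rest. The remaining checks (compatibility of $s_\Gamma,r_\Gamma$ with $m_\Gamma$, units, inverses via $di+\id=ds+dr$) are routine once this pairing framework is in place, and your sketches for them are sound.
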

\begin{rmk}
\rm{
\begin{enumerate}
 \item The Lie algebroid of $G$ is sometimes defined by  $A G= \ker ds|_{G^{(0)}}$. In that picture, we deduce from \eqref{eq:source-target-Gamma-first-shot} that $s_\Gamma$ and  $r_\Gamma$ have to be defined by replacing $\overline{s},\overline{r}$ by
\begin{equation}
 \label{eq:alter-defn-s-r-Gamma}
 \widetilde{s}(\xi)=-  {}^td(L_\gamma\circ i)(\xi)  \qquad \text{ and }\qquad  \widetilde{r}(\xi) = {}^tdR_\gamma(\xi).
\end{equation}
\item The submanifold $\Gamma^{(2)}$ of composable pairs in $\Gamma$  is given by 
\begin{equation}
 \Gamma^{(2)} = \{  (\delta_1,\delta_2)\in T_{G^{(2)}}^*G^{2}\ ;\ \rho(\delta_1,\delta_2)\in (\ker dm)^\perp \}
\end{equation}
and $m_\Gamma= {}^tdm^{-1}\circ \rho$. 
\item The graph of $m_\Gamma$ is canonically isomorphic to the conormal space of the graph of $m$:
\begin{equation}
 \mathrm{Gr}(m_\Gamma)\ni (\gamma,\xi,\gamma_1,\xi_1,\gamma_2,\xi_2)\longrightarrow (\gamma,-\xi,\gamma_1,\xi_1,\gamma_2,\xi_2)\in N^*\mathrm{Gr}(m).
\end{equation}
Since $N^*\mathrm{Gr}(m)$ is Lagrangian in $T^*G\times T^*G \times T^*G$, we get that $\mathrm{Gr}(m_\Gamma)$ is Lagrangian in $(-T^*G)\times T^*G \times T^*G$, that is, $\Gamma$ is a symplectic groupoid.
\end{enumerate}
}
\end{rmk}
Finally, we remember that $T^*G$ is also a vector bundle over $G$, and we note $p : T^*G\to G$ the projection map. The following result is  useful and obvious from the construction detailed above.
\begin{prop}
\begin{enumerate}
 \item The subspace of composable pairs $\Gamma^{(2)}$ is a vector bundle over $G^{(2)}$ and $m_\Gamma : \Gamma^{(2)}\to \Gamma$ is a vector bundle homomorphism:
\begin{equation}
 \xymatrix{
    \Gamma^{(2)}\ar[d]^{(p,p)}\ar[r]^{m_\Gamma} & \Gamma \ar[d]^{p}\\
    G^{(2)} \ar[r]^{m} & G
}
\end{equation}
whose kernel is the conormal space of $G^{(2)}$ into $G^2$: 
\(\displaystyle 
 \ker m_\Gamma = N^*G^{(2)}.
\)
\item The maps $r_\Gamma,s_\Gamma : \Gamma \to A^*G$ are also vector bundle homomorphisms:
\begin{equation}
 \xymatrix{
    \Gamma\ar[d]^{p}\ar[r]^{s_\Gamma} &  A^*G \ar[d]^{p}\\
    G  \ar[r]^{s} & G^{(0)}
}\hspace{3cm}
 \xymatrix{
    \Gamma\ar[d]^{p}\ar[r]^{r_\Gamma} &  A^*G \ar[d]^{p}\\
    G  \ar[r]^{r} & G^{(0)}
}
\end{equation}
 and $\displaystyle \ker r_\Gamma = (\ker ds)^\perp$, $\displaystyle \ker s_\Gamma = (\ker dr)^\perp$.
\end{enumerate}
\end{prop}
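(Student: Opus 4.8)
The plan is to verify each assertion fibrewise over the base, reading off everything from the explicit formulas $m_\Gamma = {}^tdm^{-1}\circ\rho$, $s_\Gamma(\gamma,\xi)=(s(\gamma),\overline{s}(\xi))$ and $r_\Gamma(\gamma,\xi)=(r(\gamma),\overline{r}(\xi))$ obtained in the construction above. The key observation is that each of these maps is assembled from fibrewise-linear operations --- the restriction of covectors $\rho$, and the transpose differentials ${}^tdm$, ${}^tdL_\gamma$, ${}^tdR_\gamma$ --- which depend smoothly on the base point; smoothness of the resulting bundle maps is then automatic and I would not dwell on it.

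First I would establish the bundle structure of $\Gamma^{(2)}$. The restriction $T^*_{G^{(2)}}G^2$ is a vector bundle over $G^{(2)}$, and since $G^{(2)}\hookrightarrow G^2$ is an embedded submanifold the map $\rho:T^*_{G^{(2)}}G^2\to T^*G^{(2)}$ is a surjective bundle morphism of constant rank, being dual to the inclusion $TG^{(2)}\hookrightarrow TG^2|_{G^{(2)}}$. Because $m$ is a submersion, $\ker dm$ is a constant-rank subbundle of $TG^{(2)}$, so its annihilator $(\ker dm)^\perp$ is a subbundle of $T^*G^{(2)}$; hence $\Gamma^{(2)}=\rho^{-1}((\ker dm)^\perp)$ is a subbundle of $T^*_{G^{(2)}}G^2$, i.e. a vector bundle over $G^{(2)}$. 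On $\Gamma^{(2)}$ the form $\rho$ takes values in $(\ker dm)^\perp=\im{}^tdm$, on which ${}^tdm^{-1}$ is the fibrewise-linear inverse of the injective map ${}^tdm$; thus $m_\Gamma={}^tdm^{-1}\circ\rho$ is a well-defined vector bundle homomorphism covering $m$. For its kernel, injectivity of ${}^tdm$ forces $m_\Gamma(\delta)=0$ to be equivalent to $\rho(\delta)=0$, that is, to $\delta$ annihilating $TG^{(2)}$; this is exactly the conormal condition, so $\ker m_\Gamma=N^*G^{(2)}$.

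Next I would treat $s_\Gamma$ and $r_\Gamma$. Each of $\overline{s}(\xi)$ and $\overline{r}(\xi)$ is linear in $\xi$ --- it is a restriction of $\xi$ followed by a transpose differential --- so $s_\Gamma,r_\Gamma$ are fibrewise linear and cover $s,r$, hence are bundle homomorphisms. For the kernels I would use that, at $\gamma$ with $x=s(\gamma)$, the differential $dL_\gamma$ identifies $T_xG^x$ isomorphically with $\im dL_\gamma=\ker dr_\gamma=T_\gamma G^{r(\gamma)}$, while $dR_\gamma$ identifies $T_{r(\gamma)}G_{r(\gamma)}$ isomorphically with $\im dR_\gamma=\ker ds_\gamma=T_\gamma G_{s(\gamma)}$ (these being the differentials of the left/right translation diffeomorphisms of source/target fibres). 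Consequently $\overline{s}(\xi)=0$ holds iff $\xi$ annihilates $\ker dr_\gamma$, i.e. $\xi\in(\ker dr)^\perp$, which gives $\ker s_\Gamma=(\ker dr)^\perp$; symmetrically $\overline{r}(\xi)=0$ iff $\xi\in(\ker ds)^\perp$, giving $\ker r_\Gamma=(\ker ds)^\perp$.

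The one place where the argument is not purely formal --- and the main point to get right --- is the crossing in these kernel formulas: $s_\Gamma$ has kernel $(\ker dr)^\perp$ and $r_\Gamma$ has kernel $(\ker ds)^\perp$, with source and target interchanged. This is forced by the construction, in which $\overline{s}$ is built from the left translation $L_\gamma$, whose differential has image $\ker dr$, whereas $\overline{r}$ is built from the right translation $R_\gamma$, whose differential has image $\ker ds$. The substance of the proof is thus simply to confirm that each translation differential is genuinely an isomorphism onto the claimed fibre subspace; once that is in place, the annihilator descriptions of the kernels follow immediately, and everything else is routine fibrewise linear algebra carried smoothly over the base.
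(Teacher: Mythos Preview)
Your proof is correct and matches the paper's approach exactly: the paper gives no proof at all, stating only that the result is ``useful and obvious from the construction detailed above,'' and what you have written is precisely the fibrewise-linear verification implicit in that construction. The crossing of kernels you highlight --- $\ker s_\Gamma=(\ker dr)^\perp$ and $\ker r_\Gamma=(\ker ds)^\perp$ --- is indeed the only point requiring a moment's care, and your identification of $\im dL_\gamma=\ker dr_\gamma$ and $\im dR_\gamma=\ker ds_\gamma$ via the left/right translation diffeomorphisms is the right way to see it.
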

We finish this review with two basic examples, the first one being the historical one \cite{CDW}.
\begin{exam}
Let $G$ be a Lie group with Lie algebra $\mathfrak g$. We have immediately
\begin{equation}\label{eq:compo-T-star-Lie-group}
  s_\Gamma(g,\xi)= L_{g}^*\xi \in \mathfrak g^* \text{ and } r_\Gamma(g,\xi)=R_{g}^*\xi \in \mathfrak g^*.
\end{equation}
When $s_\Gamma(g_1,\xi_1)=r_\Gamma(g_2,\xi_2) $, we get $({g_1},\xi_1)({g_2},\xi_2)=({g_1}{g_2},\xi)$ with $\xi$ characterized by:
\begin{equation}
  \xi(dm_{(g_1,g_2)}(t_1,t_2))  =  \xi_1(t_1) + \xi_2(t_2).
\end{equation}
Since $dm_{(g_1,g_2)}(t_1,t_2) = dR_{g_2}(t_1)+dL_{g_1}(t_2)$, we obtain $\xi = R_{g_2^{-1}}^*\xi_1=L_{g_1^{-1}}^*\xi_2$. Thus
\begin{equation}
  ({g_1},\xi_1)({g_2},\xi_2) =  (g_1g_2, R_{g_2^{-1}}^*\xi_1)  \text{ when } L_{g_1}^*\xi_1=R_{g_2}^*\xi_2.
\end{equation}

On the other hand, we recall that $G$ acts on $\mathfrak g^*$ by 
\begin{equation}
 \hbox{Ad}^*_{g}.\xi = L_{g}^*R_{{g}^{-1}}^*\xi.
\end{equation}
This gives rise to the transformation groupoid  $ G\semi \mathfrak g^*\rightrightarrows \mathfrak g^*$ whose source, target, multiplication and inversion are thus given by 
\begin{equation}
 s(g,\xi)=\hbox{Ad}^*_{g}.\xi,\quad r(g,\xi)=  \xi ,\quad (g_1,\xi_1)(g_2,\hbox{Ad}^*_{g_1}.\xi_1)=({g_1}{g_2},\xi_1),\quad (g,\xi)^{-1}=(g^{-1},\hbox{Ad}^*_{g}.\xi).
\end{equation}
Now, the vector bundle trivialization 
\(
\Phi:T^*G\longrightarrow G\times\mathfrak g^*,\ 
({g},\xi)\longmapsto ({g},R_{g}^*\xi),
\)
gives a  Lie groupoid isomorphism $\Phi : T^*G \longrightarrow G\semi \mathfrak g^*$. For instance, we check
\begin{eqnarray*}
\Phi(({g_1},\xi_1)({g_2},\xi_2))&=&\Phi(g_1g_2, R_{g_2^{-1}}^*\xi_1) = ( g_1g_2, R_{g_1g_2}^*R_{g_2^{-1}}^*\xi_1)
= ( g_1g_2, R_{g_1}^*\xi_1)\\
&=&({g_1},R_{g_1}^*\xi_1). ({g_2},R_{g_2}^*\xi_2) \quad \text{ since } \hbox{Ad}^*_{g_1}.R_{g_1}^*\xi_1= L_{g_1}^*\xi_1=R_{g_2}^*\xi_2\\ 
&=& \Phi(g_1,\xi_1).\Phi(g_2,\xi_2).
\end{eqnarray*}
\end{exam}

\begin{exam}\label{exam:cot-gpd-of-fiber-product-gpd}
 We take $G=X\times X\times Z\rightrightarrows X\times Z$ (cartesian product of the pair groupoid $X\times X$ with the space $Z$).
Here we have
\[
 \Gamma^{(0)}=A^*G = \{ (x,x,z,\xi,-\xi,0)\ ;\ (x,\xi)\in T^*X, z\in Z\}.
\]
Let $\gamma=(x,y,z)$ and $\xi=(\zeta,\eta,\sigma)\in T^*_\gamma G$. Then $\overline{s}(\xi)\in T^*_{(y,y,z)}X\times X\times Z$ is given by $\eta\in T^*_yX\simeq 0\times T^*_yX\times 0 $ after extension by $0$ onto the subspace of vectors of the form $(u,u,w)$. This is similar for $\overline{r}(\xi)\in T^*_{(x,x,z)}X\times X\times Z $, starting with $\zeta\in T^*_xX\simeq  T^*_x X\times 0 \times 0$.  Using 
\[
 (u,v,w) = (u-v,0,0)+(v,v,w) = (0,v-u,0)+(u,u,w),
\]
we get $s_\Gamma(x,y,z,\xi,\eta,\sigma) =(y,y,z,-\eta,\eta,0)$,  $ r_\Gamma(x,y,z,\xi,\eta,\sigma) =(x,x,z,\xi,-\xi,0)$ and
\begin{equation}
 (x,y,z,\xi,\eta,\sigma). (y,x',z,-\eta,\xi',\sigma') = (x,x',z,\xi,\xi',\sigma+\sigma').
\end{equation}
Note that if $Z=\{\mathrm{pt}\}$, $\Gamma=T^*(X\times X)$ is isomorphic to the pair groupoid $T^*X\times T^*X$, with isomorphism given by 
\(
 T^*(X\times X) \longrightarrow T^*X\times T^*X\ ; \ (x,y,\zeta,\eta)\mapsto (x,\zeta,y,-\eta).
\)
\end{exam}

%
%

%

 \end{document}